\newcolumntype{C}{>{$}c<{$}} 
\tikzset{cross/.style={cross out, draw=black, fill=none, minimum size=2*(#1-\pgflinewidth), inner sep=0pt, outer sep=0pt}, cross/.default={2pt}}
\newtheorem{theorem}{Theorem}[section]
\newtheorem{lemma}[theorem]{Lemma}
\newtheorem{proposition}[theorem]{Proposition}
\theoremstyle{definition}
\theoremstyle{definition}
\theoremstyle{definition}
\newtheorem{remark}[theorem]{Remark}
\theoremstyle{definition}
\theoremstyle{definition}
\newtheorem{definition}[theorem]{Definition}
\theoremstyle{definition}
\theoremstyle{definition}
\theoremstyle{definition}
\newtheoremstyle{named}{}{}{\itshape}{}{\bfseries}{.}{.5em}{\thmnote{#3 }#1}
\theoremstyle{named}
\newcommand{\ZZ}{\mathbb{Z}}
\newcommand{\RR}{\mathbb{R}}
\newcommand{\QQ}{\mathbb{Q}}
\newcommand{\CC}{\mathbb{C}}
\newcommand{\NN}{\mathbb{N}}
\newcommand{\TT}{\mathbb{T}}
\newcommand{\EE}{\mathbb{E}}
\newcommand{\STS}{\hbox{\sf STS}}
\newcommand{\calH}{\mathcal{H}}
\newcommand{\Hol}{\hbox{\sf Hol}}
\newcommand{\Var}{\hbox{\rm Var}}
\newcommand{\Tr}{\hbox{\rm Tr}}
\newcommand{\Id}{\hbox{\rm Id}}
\newcommand{\Irr}{\hbox{\rm Irr}}
\newcommand{\Par}{\hbox{\rm Par}}
\newcommand{\CF}{\hbox{\rm CF}}
\newcommand{\ch}{\hbox{\rm ch}}
\newcommand{\cyc}{\hbox{\rm cyc}}
\newcommand{\cont}{\hbox{\rm cont}}
\newcommand{\calK}{\mathcal{K}}
\newcommand{\Unif}{\hbox{\rm Unif}}
\newcommand{\SL}{\mathrm{SL}}
\DeclarePairedDelimiter{\ideal}{\langle}{\rangle}
\DeclarePairedDelimiter\floor{\lfloor}{\rfloor}
\title{A model for horizontally restricted random square-tiled surfaces}
\author{Nick Fitzhugh}
\email{nicholas.j.fitzhugh@gmail.com}
\author{Aaron Schondorf}
\email{amschondorf@gmail.com}
\author{Sunrose Shrestha}
\address{Department of Mathematics, Carleton College, Northfield, MN}
\email{sshrestha@carleton.edu}
\author{Sebastian Vander Ploeg Fallon}
\address{Department of Physics, Cornell University, Ithaca, NY}
\email{sjv56@cornell.edu}
\author{Thomas Zeng}
\address{Department of Computer Science, University of Wisconsin at Madison, Madison, WI}
\email{tpzeng@wisc.edu}
\begin{document}


\begin{abstract}
A square-tiled surface (STS) is a (finite, possibly branched) cover of the standard square-torus with possible branching over exactly 1 point. Alternately, STSs can be viewed as finitely many axis-parallel squares with sides glued in parallel pairs. After a labelling of the squares by $\{1, \dots, n\}$, we can describe an STS with $n$ squares using two permutations $\sigma, \tau \in S_n$, where $\sigma$ encodes how the squares are glued horizontally and $\tau$ encodes how the squares are glued vertically. Hence, a previously considered natural model for STSs with $n$ squares is $S_n \times S_n$ with the uniform distribution. We modify this model to obtain a new one: We fix $\alpha \in [0,1]$ and let $\calK_{\mu_n}$ be a conjugacy class of $S_n$ with at most $n^\alpha$ cycles. Then $\calK_{\mu_n} \times S_n$ with the uniform distribution is a model for STSs with $n$ squares and restricted horizontal gluings. Since horizontal cycles of the $\sigma$ permutation are related to the number of maximal horizontal cylinders, this new model serves as a random model for STSs with at most $n^\alpha$ maximal horizontal cylinders. We deduce the asymptotic (as $n$ grows) number of components, genus distribution, most likely stratum and set of holonomy vectors of saddle connections for random STSs in this new model. 
\end{abstract}

\maketitle

\section{Introduction}

\begin{wrapfigure}{r}{0.4\textwidth}
\resizebox{6.5cm}{!}{
\begin{tikzpicture}
    [sq/.style=
  {shape=regular polygon, regular polygon sides=4, draw, minimum width=1.414cm}, rect/.style={shape=regular polygon, regular polygon sides=4, draw, minimum width=1pt}]
   \node[sq, fill=gray] at (2,1){$7$};
  \node[sq, fill=gray!25] at (0,0){$4$};
  
   \node[sq, fill=gray!25] at (1,0){$5$};
   \node[sq, fill=gray!25] at (-1,0){$3$};
   \node[sq, fill=gray] at (1,1){$6$};
  
   \node[sq, fill=gray!50] at (-1,1){$2$};
   \node[sq, fill=gray!50] at (-2,1){$1$};
   \node[sq, fill=gray] at (1,2){$8$};
   \node[sq, fill=gray] at (2,2){$9$};

    \foreach \x in {3,5}{
   \draw[fill=black] (\x*45:0.707cm) circle [radius=2pt];
   \draw[shift=({-2,1}),fill=black] (\x*45:0.707cm) circle [radius=2pt];}
   \draw[shift=({-1,1}),fill=black] (1*45:0.707cm) circle [radius=2pt];
    \foreach \x in {1,7}{
    \draw (\x*45:0.607cm) rectangle (\x*45:0.807cm);}
    \draw[shift=({1,2})](3*45:0.607cm) rectangle (3*45:0.807cm);
    \draw[shift=({2,2})] (1*45:0.607cm) rectangle (1*45:0.807cm);
    \draw[shift=({2,1})] (7*45:0.607cm) rectangle (7*45:0.807cm);
    \foreach \x in {3, 5}{
    \draw[shift=({-1,0})] (\x*45:0.707cm) circle [radius=2.5pt];
    }
    \foreach \x in {1, 7}{
    \draw[shift=({1,0})] (\x*45:0.707cm) circle [radius=2.5pt];
    }
    \draw[shift=({-1,1})] (3*45:0.707cm) circle [radius=2.5pt];
  \draw[shift=({1,2})] (1*45:0.707cm) circle [radius=2.5pt];

    \draw[thin] (0,0.35) -- ++(0,0.3);
    \draw[thin] (0,-0.35) -- ++(0,-0.3);

    \draw[thin] (-1.05,1.35) -- ++(0,0.3);
    \draw[thin] (-1.05,-0.35) -- ++(0,-0.3);
    \draw[thin] (-0.95,1.35) -- ++(0,0.3);
    \draw[thin] (-0.95,-0.35) -- ++(0,-0.3);

    \draw[thin] (1,1.35+1) -- ++(0,0.3);
    \draw[thin] (1,-0.35) -- ++(0,-0.3);
    \draw[thin] (1.05,1.35+1) -- ++(0,0.3);
    \draw[thin] (1.05,-0.35) -- ++(0,-0.3);
    \draw[thin] (0.95,1.35+1) -- ++(0,0.3);
    \draw[thin] (0.95,-0.35) -- ++(0,-0.3);

  \draw[thin] (-1.5,0.1) -- ++(-0.15,-0.15);
  \draw[thin] (-1.5,0.1) -- ++(0.15,-0.15);
  \draw[thin] (1.5,0.1) -- ++(-0.15,-0.15);
  \draw[thin] (1.5,0.1) -- ++(0.15,-0.15);


  \draw[thin] (-2.5,0.9) -- ++(-0.15,-0.15);
  \draw[thin] (-2.5,0.9) -- ++(0.15,-0.15);
  \draw[thin] (-2.5,1.1) -- ++(-0.15,-0.15);
  \draw[thin] (-2.5,1.1) -- ++(0.15,-0.15);
  
  \draw[thin] (-0.5,0.9) -- ++(-0.15,-0.15);
  \draw[thin] (-0.5,0.9) -- ++(0.15,-0.15);
  \draw[thin] (-0.5,1.1) -- ++(-0.15,-0.15);
  \draw[thin] (-0.5,1.1) -- ++(0.15,-0.15);

\draw[thin] (0.5,1.1) -- ++(-0.15,-0.15);
  \draw[thin] (0.5,1.1) -- ++(0.15,-0.15);
  \draw[thin] (0.5,1) -- ++(-0.15,-0.15);
  \draw[thin] (0.5,1) -- ++(0.15,-0.15);
  \draw[thin] (0.5,0.9) -- ++(-0.15,-0.15);
  \draw[thin] (0.5,0.9) -- ++(0.15,-0.15);

  \draw[thin] (2.5,1.1) -- ++(-0.15,-0.15);
  \draw[thin] (2.5,1.1) -- ++(0.15,-0.15);
  \draw[thin] (2.5,1) -- ++(-0.15,-0.15);
  \draw[thin] (2.5,1) -- ++(0.15,-0.15);
  \draw[thin] (2.5,0.9) -- ++(-0.15,-0.15);
  \draw[thin] (2.5,0.9) -- ++(0.15,-0.15);

\node[] at (-2, 1.5){$\vee$};
\node[] at (-2, 0.5){$\vee$};

\node[rotate=90] at (2, 1.5+1){$\sim$};
\node[rotate=90] at (2, 0.5){$\sim$};

\node[rotate=90] at (0.5, 2){$\vee$};
\node[rotate=90] at (2.5, 2){$\vee$};

\end{tikzpicture}
}
\caption{A square-tiled surface in $\STS_9$. Matching edge/vertex decorations are glued together. With the chosen labelling, the horizontal permutation is $\sigma = (1,2)(3,4,5)(6,7)(8,9)$ and the vertical permutation is $\tau = (1)(2,3)(4)(5,6,8)(7,9)$. The surface decomposes into three maximal horizontal cylinders indicated by the different gray shadings.}
\label{fig:introSTS}
\end{wrapfigure}
\

The goal of this paper is to study a new combinatorial model for random \textbf{square-tiled surfaces} (STSs), an important class of surfaces that appear as branched covers of the standard square-torus with branching over exactly one point. Square-tiled surfaces are a special class of \textbf{translation surfaces} which are a class of metrics on two-manifolds which admit an atlas with transition functions given by Euclidean translations. Equivalently, a translation surface can also be viewed as a pair $(X, \omega)$ where $X$ is a Riemann surface and $\omega$ is a holomorphic 1-form. A third description of translation surfaces can be given via Euclidean geometry -- a translation surface in this viewpoint is a finite collection of Euclidean polygons, embedded in the plane up to translations with sides glued in parallel pairs by translations. These surfaces are flat everywhere except for finitely many points (coming from the vertices) called \textbf{cone points} where the angle is larger than $2 \pi$. Additionally, since edges are glued in pairs via translation, the angle around each vertex is an integer multiple of $2\pi$.

A square-tiled surface (STS) is a translation surface $(X, \omega)$ where $X$ covers the square-torus $\TT = \CC/\ZZ[i]$ (possibly) branched over exactly one point and $\omega$ is the pullback of the 1-form $dz$ on $\TT$. Alternately, an STS is a translation surface where the Euclidean polygons are taken to be axis-parallel unit squares. Note that fixing the embedding of the squares in this way up to translation gives a well defined notion of top, bottom, left and right for each of the four sides of a square. Square-tiled surfaces also admit a third, combinatorial description via pairs of permutations that describe the horizontal and vertical gluings. Given a square-tiled surface with $n$ squares, we first label the squares $1, \dots, n$ and then define two permutations $\sigma, \tau \in S_n$ as follows: square $i$ is to the right of square $j$ if and only if $\sigma(j) = i$ and square $i$ is to the top of square $j$ if and only if $\tau(j) = i$. On the other hand, given two permutations $\sigma, \tau \in S_n$, one can construct a labelled square-tiled surface, denoted $S(\sigma, \tau)$, with $n$ squares (possibly with multiple connected components). We will denote $\STS_n$ as the collection of square-tiled surfaces (not necessarily connected) with $n$ squares. Note that since STSs are built using axis parallel squares, they always decompose into cylinders in the horizontal direction. See Figure \ref{fig:introSTS} for an example.

The study of translation surfaces is motivated by their use in related areas of study such as Teichm\"{u}ller theory and polygonal billiards. Translation surfaces admit an $\SL_2(\RR)$ action and the orbits of translation surfaces under the action of the diagonal subgroup give rise to Teichm\"{u}ller geodesics in Teichm\"{u}ller space. They appear in the study of polygonal billiards via a classical construction due to Katok-Zemlyakov \cite{KatZem} of ``unfolding" a billiard trajectory which allows one to relate billiard trajectories to geodesics on a translation surface. 

Square-tiled surfaces themselves also have been actively studied and used. They are concrete, hands-on examples of translation surfaces with very nice properties. For instance, the stabilizer of an STS under the $\SL_2(\RR)$ action is a lattice. Translation surfaces with lattice stablizers are called \textbf{Veech surfaces} or \textbf{lattice surfaces} and are known to exhibit rich geometric and dynamical properties. For instance, they exhibit ``optimal dynamics" -- in any direction, infinite geodesic trajectories are either periodic or dense. Moreover, since STSs are made up of squares, they can also be seen as lattice points in the moduli space of translation surfaces and can be used for volume computations \cite{Zor2, EsOk, DelGuoZogZor}. 
Moreover, their square structure also opens up connections to analytic number theory and modular forms \cite{LelRoy,LemShresThor,  ShresCounting}. 

In this note we use the combinatorial description of STSs to study the topological and geometric statistics of random STSs. In \cite{ShresRandom, Lech}, leveraging the description of STSs as pairs of permutations, $S_n \times S_n$ with the uniform distribution was proposed as a random model for STSs in $\STS_n$ and the asymptotic distribution of the genus was computed along with asymptotic geometric information. We refer to this earlier model as the \textbf{Standard Model}. In this paper, we modify the standard random model studied in \cite{ShresRandom, Lech} to define a new model where the first permutation sampled is restricted to a fixed conjugacy class. The initial motivation (which we will expand on shortly) for looking at such a modification is to restrict the combinatorial configuration of horizontal cylinders in the resulting square-tiled surface. To state the model precisely, we fix $\alpha \in [0, 1]$ and let $\mu_n$ be an integer partition of $n$ chosen at random (uniformly) among all partitions with at most $n^\alpha$ parts. Let $\calK_{\mu_n}$ be the conjugacy class of $S_n$ consisting of permutations having the cycle type $\mu_n$. Then $\calK_{\mu_n} \times S_n$ with the uniform distribution gives a way to pick a random pair of permutations which can be used to build a square-tiled surface with $n$ squares.  We call this model the \textbf{Horizontally Restricted (HR) Model}. Denote by $x \sim \Unif(X)$ when $x$ is chosen uniformly at random from a finite set $X$. For $\alpha < \frac{1}{2}$, we consider topological and geometric properties of $S(\sigma, \tau)$ where $(\sigma, \tau) \sim \Unif(\calK_{\mu_n} \times S_n)$ as $n \rightarrow \infty$. In particular, we will see that these asymptotic topological and geometric properties will not depend on the specific sequence of partitions $\mu_n$ as long as each partition has at most $n^\alpha$ parts. 

\subsection{Motivation for the Horizontally Restricted (HR) Model of STSs}

In order to motivate the Horizontally Restricted (HR) Model with general parameter $\alpha$, we first consider the case when $\alpha = 0$. In this case $\calK_{\mu_n}$ is the conjugacy class of $n$-cycles and $\calK_{\mu_n} \times S_n$ with the uniform distribution is a random model for square-tiled surfaces with a single maximal horizontal cylinder of length $n$ and height 1 (i.e. the sole horizontal cylinder is comprised of a single band of squares). All such STSs are instances of \textbf{horizontally one-cylinder surfaces} i.e. the surface contains only one maximal horizontal cylinder. Note that a horizontally one-cylinder STS does not need to have a single band of squares  (see Figure~\ref{fig:oneCylSurfaces} for examples of ones with larger height). We emphasize, however, that $\calK_{\mu_n} \times S_n$ also acts as a good model for \emph{all} horizontally one-cylinder surfaces since the proportion of STSs whose horizontal cylinders have multiple bands of squares is asymptotically zero (see Theorem~\ref{thm:singleBanded} below in Section~\ref{sec:mainResults} that lists the main results of this paper). Hence, in general, the HR model with parameter $\alpha$ (i.e. restricting the $\sigma$ permutation to one with at most $n^\alpha$ cycles) serves as a model for STSs with at most $n^\alpha$ maximal horizontal cylinders. 

Studying STSs by restricting their horizontal cylinder configurations is not new and the importance of studying STSs in this way was highlighted in earlier work by Delecroix-Goujard-Zograf-Zorich \cite{DelGuoZogZor}. There the asymptotic (as genus grows) absolute and relative contribution of horizontally one-cylinder STSs to the volume of the ambient strata is computed using the fact that square-tiled surfaces of a fixed combinatorial configuration of horizontal cylinders equidistribute in the ambient strata. Moreover, in \cite{Zmi2} and \cite{ShresCounting}, STSs in genus two have been enumerated by their horizontal cylinder configurations (more precisely, their \emph{cylinder diagrams}). 

In addition to Theorem~\ref{thm:singleBanded}, previous study of random STSs by Delecroix-Groujard-Zograf-Zorich in  \cite{DelGuoZogZor2} provide more motivation as to why the HR model is a good model for STSs with at most $n^\alpha$ cylinders. In their work, the authors consider square-tiled surfaces that have a \textbf{half-translation} structure (where edge identifications are either via translations or translations plus a $\pi$-rotation.) Moreover, for their random model, they consider $\STS_g(N)$ to be the finite set of such STSs with genus $g$ and up to $N$ squares. Considering the uniform distribution on $\STS_g(N)$, they then let $N$ tend to infinity and define a random STS of genus $g$. 
With this model they show (among various other results) that given a sequence of integers $k_g = o(\log g)$, the probability that each maximal horizontal cylinder of a random $k_g$-cylinder STS of genus $g$ is composed of a single band of squares tends to 1 as $g \rightarrow \infty$. In other words, the asymptotic probability of maximal horizontal cylinders being composed of single bands of squares tends to 1 when the number of cylinders grows slow enough. Additionally, if one does not condition on the number of cylinders, they prove that the probability that each maximal horizontal cylinder of a random STS of genus $g$ has a single band of squares tends to $\frac{\sqrt{2}}{2}$ as $g \rightarrow \infty$.

\begin{figure}
\centering
\begin{subfigure}{0.4\textwidth}
\centering
\begin{tikzpicture}
    [sq/.style=
  {shape=regular polygon, regular polygon sides=4, draw, minimum width=1.414cm, inner sep = 0}, rect/.style={shape=regular polygon, regular polygon sides=4, draw, minimum width=1pt}]
\node[sq] at (0,0){$1$};
\node[sq] at (1,0){$2$};
\node[sq] at (2,0){$3$};
\node[sq] at (3,0){$4$};
\node[sq] at (4,0){$5$};
\node[sq] at (5,0){$6$};


\node[] at (-0.65,0){$a$};
\node[] at (5.65, 0){$a$};

\node[] at (0,0.7){$b$};
\node[] at (0, -0.7){$b$};

\node[] at (1,0.7){$c$};
\node[] at (2, -0.7){$c$};

\node[] at (2,0.7){$d$};
\node[] at (3, -0.7){$d$};

\node[] at (3,0.7){$e$};
\node[] at (1, -0.7){$e$};

\node[] at (4,0.7){$f$};
\node[] at (5, -0.7){$f$};

\node[] at (5,0.7){$g$};
\node[] at (4, -0.7){$g$};

\end{tikzpicture}
\caption{}
\end{subfigure}
\begin{subfigure}{0.25\textwidth}
\centering
\begin{tikzpicture}
    [sq/.style=
  {shape=regular polygon, regular polygon sides=4, draw, minimum width=1.414cm, inner sep = 0}, rect/.style={shape=regular polygon, regular polygon sides=4, draw, minimum width=1pt}]
\node[sq] at (0,0){$1$};
\node[sq] at (1,0){$2$};
\node[sq] at (2,0){$3$};
\node[sq] at (0,1){$4$};
\node[sq] at (1,1){$5$};
\node[sq] at (2,1){$6$};


\node[] at (-0.65,0){$a$};
\node[] at (2.65, 0){$a$};

\node[] at (-0.65,1){$b$};
\node[] at (2.65, 1){$b$};

\node[] at (0,1.7){$c$};
\node[] at (1, -0.7){$c$};

\node[] at (1,1.7){$d$};
\node[] at (0, -0.7){$d$};

\node[] at (2,1.7){$e$};
\node[] at (2, -0.7){$e$};






\end{tikzpicture}
\caption{}
\end{subfigure}
\begin{subfigure}{0.2\textwidth}
\centering
\begin{tikzpicture}
    [sq/.style=
  {shape=regular polygon, regular polygon sides=4, draw, minimum width=1.414cm, inner sep = 0}, rect/.style={shape=regular polygon, regular polygon sides=4, draw, minimum width=1pt}]
\node[sq] at (0,0){$1$};
\node[sq] at (1,0){$2$};
\node[sq] at (0,1){$3$};
\node[sq] at (1,1){$4$};
\node[sq] at (0,2){$5$};
\node[sq] at (1,2){$6$};

\node[] at (-0.65,0){$a$};
\node[] at (1.65, 0){$a$};

\node[] at (-0.65,1){$b$};
\node[] at (1.65, 1){$b$};

\node[] at (-0.65,2){$c$};
\node[] at (1.65, 2){$c$};

\node[] at (0,2.7){$d$};
\node[] at (1, -0.7){$d$};

\node[] at (1,2.7){$e$};
\node[] at (0, -0.7){$e$};

\end{tikzpicture}
\caption{}
\end{subfigure}
\begin{subfigure}{0.1\textwidth}
\centering
\begin{tikzpicture}
    [sq/.style=
  {shape=regular polygon, regular polygon sides=4, draw, minimum width=1.414cm, inner sep = 0}, rect/.style={shape=regular polygon, regular polygon sides=4, draw, minimum width=1pt}]
\node[sq] at (0,0){$1$};
\node[sq] at (0,1){$2$};
\node[sq] at (0,2){$3$};
\node[sq] at (0,3){$4$};
\node[sq] at (0,4){$5$};
\node[sq] at (0,5){$6$};

\node[] at (-0.65,0){$a$};
\node[] at (0.65, 0){$a$};

\node[] at (-0.65,1){$b$};
\node[] at (0.65, 1){$b$};

\node[] at (-0.65,2){$c$};
\node[] at (0.65, 2){$c$};

\node[] at (-0.65,3){$d$};
\node[] at (0.65, 3){$d$};

\node[] at (-0.65,4){$e$};
\node[] at (0.65, 4){$e$};

\node[] at (-0.65,5){$f$};
\node[] at (0.65, 5){$f$};

\node[] at (0,-0.7){$g$};
\node[] at (0, 5.7){$g$};

\end{tikzpicture}
\caption{}
\end{subfigure}
\caption{Examples of horizontally one-cylinder surfaces. Note that in the surface (A) the maximal horizontal cylinder is comprised of a single band of squares (i.e. the height of the cylinder is 1). The maximal horizontal cylinders of surfaces (B), (C), and (D) are comprised of 2, 3 and 6 bands of squares respectively.}
\label{fig:oneCylSurfaces}
\end{figure}

\subsection{Main Results}\label{sec:mainResults}

A square-tiled surface is called \textbf{single-banded} if all of its maximal horizontal cylinders are of height 1 (i.e. are comprised of a single band of squares). Our first main result concerns the asymptotic probability of such surfaces in the standard model. 

\begin{restatable}[Asymptotic Probability of Single-Banded STSs]{theorem}{singlebanded} \label{thm:singleBanded}
Let $(\sigma, \tau) \sim \Unif(S_n \times S_n)$. Then the probability that $S(\sigma, \tau)$ is single-banded is asymptotically 1. More precisely,
    $$\Pr[S(\sigma, \tau) \text{ is single-banded}] = 1 - O\left(\frac{1}{n}\right)$$   
\end{restatable}

Our second main result concerns the moments of the number of vertices in a random STS from the HR model. Since the number of vertices is intimately related to the genus (by the Euler characteristic formula), the moments of the number of vertices is crucial in obtaining the distribution of the genus.

\begin{restatable}[Moments of Number of Vertices]{theorem}{moments} \label{thm:moments} Fix $\alpha \in [0, \frac{1}{2})$ and let $\mu_n \vdash n$ be a partition of $n$ with at most $n^\alpha$ parts. Let $V_{\mu_n}$ be the number of vertices of $S(\sigma, \tau)$ for $(\sigma,\tau) \sim \Unif(\calK_{\mu_n} \times S_n)$. Let $p(x)$ be a polynomial of degree $l$ such that $p(x)$ is non-decreasing and $p(x) \geq 0$ for $x \geq 0$. Then, for any $\epsilon > 0$ such that $\alpha + \epsilon < \frac{1}{2}$,
$$\EE[p(V_{\mu_n})] = \EE[p(C_{n})] + O\left(\frac{\log^l(n)}{n^{1-2\alpha-2\epsilon}}\right) = p(\log(n)) + O(\log(n)^{l-1}) + O\left(\frac{\log^l(n)}{n^{1-2\alpha-2\epsilon}}\right)  $$
where $C_n$ is the number of cycles of a uniformly random permutation chosen from the alternating group $A_n$. 
\end{restatable}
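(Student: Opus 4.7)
The plan is to reduce the moment computation to a character identity on $S_n$. First, a standard accounting of corners around vertices of $S(\sigma,\tau)$ (going counterclockwise around a given corner of square $i$ returns one to the analogous corner of square $[\sigma,\tau](i)$, where $[\sigma,\tau] = \sigma\tau\sigma^{-1}\tau^{-1}$) identifies
$$V_{\mu_n} \;=\; \cyc([\sigma,\tau]).$$
Now write $[\sigma,\tau] = \sigma \cdot \rho$ with $\rho := \tau\sigma^{-1}\tau^{-1}$. Conditional on $\sigma$, as $\tau$ ranges uniformly over $S_n$ the element $\rho$ is uniformly distributed on the conjugacy class of $\sigma^{-1}$, which equals $\calK_{\mu_n}$ (cycle type is invariant under inversion); since this conditional distribution does not depend on $\sigma$, the pair $(\sigma,\rho)$ is jointly independent and uniform on $\calK_{\mu_n}^2$. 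In particular, $[\sigma,\tau]$ has the same distribution as the product of two i.i.d.\ uniform samples from $\calK_{\mu_n}$.

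The Frobenius formula for products of conjugacy-class elements now yields
$$\PP([\sigma,\tau]=g) \;=\; \frac{1}{|S_n|}\sum_{\chi\in\Irr(S_n)} \frac{\chi(\mu_n)^2\,\chi(g)}{\chi(1)},$$
so for any class function $f = \sum_\chi c_\chi\chi$,
$$\EE[f([\sigma,\tau])] \;=\; \sum_{\chi} c_\chi \,\frac{\chi(\mu_n)^2}{\chi(1)}.$$
Writing $\mathbf{1}_{A_n} = (1 + \mathrm{sgn})/2$ and expanding gives $\EE_{A_n}[f] = c_1 + c_{\mathrm{sgn}}$. Because $\chi(\mu_n) = \chi(1) = 1$ for $\chi$ trivial and $\chi(\mu_n)\in\{\pm1\}$, $\chi(1)=1$ for $\chi = \mathrm{sgn}$, the $\chi \in \{1,\mathrm{sgn}\}$ contributions in the two expansions agree, so for $f = p(\cyc)$ we obtain the exact error identity
$$\EE[p(V_{\mu_n})] - \EE[p(C_n)] \;=\; \sum_{\chi \neq 1,\mathrm{sgn}} c_\chi\, \frac{\chi(\mu_n)^2}{\chi(1)}.$$

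The hard part, and main obstacle, is the quantitative bound on this character sum matching the claimed $O(\log^l(n)/n^{1-2\alpha-2\epsilon})$ error. The hypothesis that $\mu_n$ has at most $n^\alpha$ parts forces $\mu_n$ to have at most $n^\alpha$ fixed points, so the support of $\mu_n$ has size at least $n - n^\alpha$. I would combine uniform character-ratio estimates of Roichman / Larsen--Shalev type, giving $|\chi(\mu_n)/\chi(1)|$ bounded by a negative power of $n$ with an effective exponent increasing in the support of $\mu_n$, with control of the Fourier coefficients $c_\chi$ of $p(\cyc)$ obtained by expanding $\cyc$ in the power-sum class functions $\pi \mapsto \mathrm{fix}(\pi^k)$, and finish by stratifying the character sum (treating ``small'' Young diagrams via explicit hook-length estimates and ``large'' diagrams via the uniform ratio bound) using Burnside's identity $\sum_\chi \chi(1)^2 = |S_n|$. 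The $\log^l(n)$ factor in the numerator should ultimately reflect the effective size of $p(\cyc)$ on a typical permutation, whose cycle count concentrates near $H_n \sim \log n$.
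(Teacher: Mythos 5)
Your reduction is correct and genuinely different from the paper's route: the observation that $(\sigma,\tau\sigma^{-1}\tau^{-1})$ is a pair of independent uniform elements of $\calK_{\mu_n}$, the Frobenius-formula expression for $\Pr([\sigma,\tau]=g)$ (this is the paper's Lemma \ref{lem:sigmaProbExpression}, derived there via class sums), and the cancellation of the trivial and sign contributions all check out, giving the exact identity $\EE[p(V_{\mu_n})]-\EE[p(C_n)]=\sum_{\chi\neq 1,\mathrm{sgn}} c_\chi\,\chi(\mu_n)^2/\chi(1)$ with $c_\chi=\ideal{p(\cyc),\chi}$. But the theorem is precisely the bound on that sum, and that is the part you have not proved: your closing plan (expanding $\cyc$ in the functions $\pi\mapsto\mathrm{fix}(\pi^k)$, controlling individual $c_\chi$, stratifying Young diagrams, Burnside) is a sketch whose feasibility is unclear — the coefficients $c_\chi$ of $p(\cyc)$ are spread over essentially all of $\Irr(S_n)$ and are painful to control one at a time, and nothing in the sketch explains quantitatively where the stated $\log^l(n)$ numerator or the exponent $1-2\alpha-2\epsilon$ would come from. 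As written, this is a genuine gap, and it is the entire analytic content of the theorem.

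The gap is closable within your framework, and more simply than your sketch suggests: do not bound the $c_\chi$ individually. By orthonormality, $\sum_\chi c_\chi^2=\frac{1}{n!}\sum_{\pi\in S_n}p(\cyc(\pi))^2=\EE[p(K_n)^2]=O(\log^{2l} n)$ since $K_n$ (cycles of a uniform permutation) has $m$-th moment $O(\log^m n)$; by Theorem \ref{thm:larsenshalevbound} (Larsen--Shalev), $\chi(\mu_n)^2/\chi(1)\le \chi(1)^{-(1-2\alpha-2\epsilon)}$ for $n$ large, uniformly in $\chi$; and Cauchy--Schwarz plus Theorem \ref{thm:liebeckshalev} (Liebeck--Shalev, with $s=2(1-2\alpha-2\epsilon)>0$, the two one-dimensional characters removed) give $\bigl(\sum_{\chi\neq 1,\mathrm{sgn}}\chi(1)^{-s}\bigr)^{1/2}=O\bigl(n^{-(1-2\alpha-2\epsilon)}\bigr)$, hence the claimed error. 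For comparison, the paper proceeds quite differently: it never forms your exact identity, but instead writes $\EE[p(C_{\mu_n})]-\EE[p(C_n)]$ by summation by parts over tail probabilities, bounds the range $s\le t$ by $p(t)\,\|P_{\mu_n}-U_n\|$ using the total-variation convergence of Theorem \ref{thm:probabilityConvergenceReal}, bounds the range $s>t$ by the large-deviation estimates of Lemma \ref{lem:largedeviations} (proved via the probability generating function of Proposition \ref{prop:ProbGenFcn} evaluated at $q=2$, where only two-row partitions survive) and Proposition \ref{lem:largedevforuniform}, and then chooses $t\asymp\log n$, which is where its $\log^l(n)$ arises; your route, once completed as above, trades those tail estimates for a single $\ell^2$ bound but relies on the same two character-theoretic inputs.
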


Note that we can use the Euler characteristic formula to obtain the genus of a surface $S \in \STS_n$ as given by
$$g = \frac{n}{2} - \frac{V}{2} + 1$$ 
where $V$ is the number of vertices. So, Theorem \ref{thm:moments} also gives the mean and variance of the genus of a random STS in the HR model as a Corollary:

\begin{restatable}[Expected Genus]{corollary}{expectedgenus}\label{cor:expectedgenus} Fix $\alpha \in [0, \frac{1}{2})$ and let $\mu_n \vdash n$ a partition of $n$ with at most $n^\alpha$ parts. Let $G_{\mu_n} = \frac{n}{2} - \frac{V_{\mu_n}}{2} +1 $ be the genus of $S(\sigma, \tau)$ for $(\sigma, \tau) \sim \Unif(\calK_{\mu_n} \times S_n)$. Then the expectation and variance of $G_{\mu_n}$ is given by,
$$\EE[G_{\mu_n}] = \frac{n}{2} - \frac{\log n}{2} - \frac{\gamma}{2} +1 +o(1)\,\, \text{ and } \,\,\Var[G_{\mu_n}] = \frac{\log n}{4} +\frac{\gamma}{4} - \frac{\pi^2}{24} +o(1)$$ 
where $\gamma \approx 0.577$ is the Euler-Mascheroni constant.
\end{restatable}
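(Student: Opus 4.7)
The plan is to reduce the computation to moments of $V_{\mu_n}$ via the Euler characteristic formula, transfer these to cycle-count moments on $A_n$ using Theorem \ref{thm:moments}, and then evaluate the $A_n$-moments via a generating-function argument.

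First I would observe that since $G_{\mu_n}$ is an affine function of $V_{\mu_n}$, linearity gives
\[
\EE[G_{\mu_n}] = \tfrac{n}{2} - \tfrac{1}{2}\EE[V_{\mu_n}] + 1, \qquad \Var[G_{\mu_n}] = \tfrac{1}{4}\Var[V_{\mu_n}],
\]
so it suffices to compute $\EE[V_{\mu_n}]$ and $\EE[V_{\mu_n}^2]$ to $o(1)$ precision. Applying Theorem \ref{thm:moments} with $p(x) = x$ and $p(x) = x^2$ (both non-negative and non-decreasing on $[0,\infty)$), and choosing any $\epsilon > 0$ with $1 - 2\alpha - 2\epsilon > 0$, we obtain
\[
\EE[V_{\mu_n}] = \EE[C_n] + O\!\left(\tfrac{\log n}{n^{1-2\alpha-2\epsilon}}\right), \qquad \EE[V_{\mu_n}^2] = \EE[C_n^2] + O\!\left(\tfrac{\log^2 n}{n^{1-2\alpha-2\epsilon}}\right).
\]
Since $\EE[C_n] = O(\log n)$ (as will follow from the next step), these combine to give $\Var[V_{\mu_n}] = \Var[C_n] + o(1)$, reducing the problem to computing $\EE[C_n]$ and $\Var[C_n]$ to $o(1)$ precision.

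The substantive step is computing these $A_n$-moments. I would start from the generating function $f(x) = x(x+1)\cdots(x+n-1) = \sum_{k} c(n,k)x^k$ encoding the unsigned Stirling numbers of the first kind, so that $\EE_{S_n}[x^{K}] = f(x)/n!$ for $K$ the cycle count on $S_n$. The key observation is that a permutation lies in $A_n$ precisely when $n - K$ is even, so $\mathbf{1}_{A_n} = (1 + (-1)^{n-K})/2$. Since $f(-1) = 0$ for $n \geq 2$, the denominator $P_{S_n}(A_n) = 1/2$ cancels the interior factor of $1/2$, yielding
\[
\EE_{A_n}[\phi(K)] = \EE_{S_n}[\phi(K)] + (-1)^n \EE_{S_n}[\phi(K)(-1)^{K}]
\]
for any function $\phi$. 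Applying this to $\phi(k) = k$ and $\phi(k) = k^2$ and evaluating $f'(-1)$ and $f''(-1)$ from the local expansion $f(x) = -(n-2)!(x+1) + O((x+1)^2)$, one checks that both correction terms are $O(\log n / n^2) = o(1)$. Hence the $A_n$-moments agree with the standard $S_n$-moments up to $o(1)$, giving the classical asymptotics
\[
\EE[C_n] = H_n + o(1) = \log n + \gamma + o(1), \quad \Var[C_n] = H_n - H_n^{(2)} + o(1) = \log n + \gamma - \tfrac{\pi^2}{6} + o(1),
\]
where $H_n^{(2)} = \sum_{k=1}^n 1/k^2 \to \pi^2/6$. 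Substituting into the two formulas of the first paragraph yields the claimed asymptotics for $\EE[G_{\mu_n}]$ and $\Var[G_{\mu_n}]$. The main obstacle is the parity bookkeeping in passing from $S_n$ to $A_n$: one must verify that restricting to the alternating subgroup perturbs the mean and variance of the cycle count by only $o(1)$, which the generating-function identity above handles cleanly.
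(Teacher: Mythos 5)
Your proposal is correct and follows essentially the same route as the paper: express $G_{\mu_n}$ affinely in $V_{\mu_n}$, invoke Theorem \ref{thm:moments} with $p(x)=x$ and $p(x)=x^2$ to replace the first two moments of $V_{\mu_n}$ by those of $C_n$ up to $o(1)$, and then use the asymptotics $\EE[C_n]=\log n+\gamma+o(1)$, $\Var[C_n]=\log n+\gamma-\tfrac{\pi^2}{6}+o(1)$. The only difference is that the paper simply cites these $A_n$ cycle-count asymptotics (Proposition \ref{prop:expectednumcycles}, due to Fleming--Pippenger), whereas you rederive them via the Stirling-number generating function and the parity correction at $x=-1$, which is a correct, self-contained substitute.
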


In many cases, knowing all the moments of a random variable gives sufficient information to recover the distribution. Hence, we are also able to obtain the asymptotic distribution of the genus of a random STS in the HR Model.

\begin{restatable}[Genus Distribution]{theorem}{genusdistribution}\label{thm:genusdist} Let $\alpha \in [0, \frac{1}{2}) $ and $\mu_n \vdash n$ with at most $n^\alpha$ parts. Let $G_{\mu_n} = \frac{n}{2} - \frac{V_{\mu_n}}{2} +1$ be the genus of $S(\sigma, \tau)$ for $(\sigma, \tau) \sim \Unif(\calK_{\mu_n} \times S_n)$. Let $a > 0$ fixed and $K_n$ be the number of cycles of $\pi \sim \Unif(S_n)$. Then, uniformly, for $\ell$ satisfying $n-l$ being even and $\frac{\ell - \EE[K_n] }{\sqrt{\Var[K_n]}} \in [-a,a]$ , 

$$\Pr\left(G_{\mu_n} = \frac{n}{2} - \frac{\ell}{2} + 1 \right) =   \frac{(2 + O(\log(n)^{-1/2} ))e^{-\frac{1}{2} \cdot \frac{(\ell - \EE[K_n])^2}{ \Var[K_n]}}}{\sqrt{2 \pi \Var[K_n]}} $$
where $\EE[K_n] = \log(n) + \gamma + o(1)$, $\Var[K_n] = \log(n) + \gamma -\frac{\pi^2}{6}+ o(1) $ and $\gamma \approx 0.577$ is the Euler-Mascheroni constant.
\end{restatable}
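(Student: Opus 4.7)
The plan is to reduce Theorem \ref{thm:genusdist} to the classical local central limit theorem (LCLT) for the number of cycles of a uniform permutation in $S_n$, and then to use Theorem \ref{thm:moments} as the bridge transferring that LCLT to $V_{\mu_n}$. Since $G_{\mu_n} = \frac{n}{2} - \frac{V_{\mu_n}}{2} + 1$, it suffices to give a pointwise estimate of $\Pr(V_{\mu_n} = \ell)$.

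First, a corner-chasing argument shows that $V_{\mu_n}$ equals the number of cycles of the commutator $[\tau,\sigma] = \tau\sigma\tau^{-1}\sigma^{-1}$: starting at the bottom-left corner of square $i$ and traversing the identifications once around that vertex, one lands at the bottom-left corner of square $[\tau,\sigma](i)$. Since any commutator lies in $A_n$, $V_{\mu_n}$ is always congruent to $n$ mod $2$, which explains the parity hypothesis on $\ell$. For $C_n$, a permutation lies in $A_n$ precisely when its cycle count has the same parity as $n$, and $|A_n| = |S_n|/2$ forces $\Pr(C_n = \ell) = 2 \Pr(K_n = \ell)$ whenever $\ell \equiv n \pmod 2$. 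The classical LCLT for $K_n$, proved via the Feller decomposition $K_n = \sum_{k=1}^n B_k$ into independent Bernoullis $B_k \sim \mathrm{Ber}(1/k)$ and Fourier inversion, then yields the claimed Gaussian asymptotic for $\Pr(C_n=\ell)$ with the factor of $2$ in front.

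Second, I transfer this LCLT to $V_{\mu_n}$ using the moment bounds of Theorem \ref{thm:moments}: $|\EE[V_{\mu_n}^k] - \EE[C_n^k]| = O(\log^k(n)/n^{1-2\alpha-2\epsilon})$ for every fixed $k$ (after writing $x^k$ as a difference of non-negative non-decreasing polynomials so that the hypothesis applies). Working with the probability generating functions $\phi_X(z) = \EE[z^X]$, these moment estimates bound $|\phi_{V_{\mu_n}}(z) - \phi_{C_n}(z)|$ on a small annulus $|z-1| \leq r_n$ via the Taylor expansion in $(z-1)$. Combined with tail bounds (Markov applied to a high polynomial moment) that confine both random variables to a window of width $O(\log n)$ around $\EE[K_n]$ with probability $1 - O(n^{-c})$, Cauchy's integral formula $\Pr(X=\ell) = (2\pi i)^{-1}\oint \phi_X(z)\, z^{-\ell-1}\,dz$ on a circle of radius slightly larger than $1$ gives $|\Pr(V_{\mu_n}=\ell) - \Pr(C_n=\ell)| = O(n^{-c'})$ uniformly for $\ell$ in the Gaussian window.

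The main obstacle is this second step: moment convergence on its own does not imply an LCLT, and one must control the probability generating function on a full annular neighborhood of the unit circle rather than only through its Taylor coefficients at $z = 1$. The approach works here because Theorem \ref{thm:moments} delivers polynomially small error on every fixed moment, whereas the target local probability is of order $(\log n)^{-1/2}$; there is ample margin for the contour-integral comparison to yield a much smaller error. Combining this difference estimate with Step 1 then produces precisely the claimed asymptotic for $\Pr(G_{\mu_n} = (n-\ell)/2 + 1)$ uniformly in the Gaussian window.
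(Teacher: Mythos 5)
Your Step 1 (the LCLT for $C_n$ via the parity relation $\Pr(C_n=\ell)=2\Pr(K_n=\ell)$ when $\ell\equiv n\pmod 2$, together with the Feller decomposition of $K_n$) matches the content of the Chmutov--Pittel local limit theorem that the paper cites, and your reduction of $G_{\mu_n}$ to $V_{\mu_n}$ is the same one the paper uses. The divergence, and the real problem, is Step 2. The paper does \emph{not} go through the moment bounds of Theorem~\ref{thm:moments} at all; it applies Theorem~\ref{thm:probabilityConvergenceReal} directly. Since the total variation distance $\|P_{\mu_n}-U_n\|$ is a $\sup$ over all events in $A_n$, and $\{V_{\mu_n}=\ell\}$ corresponds to a union of conjugacy classes, one gets immediately $\left|\Pr(V_{\mu_n}=\ell)-\Pr(C_n=\ell)\right|\leq \|P_{\mu_n}-U_n\| = O\!\left(n^{-(1-2\alpha-2\epsilon)}\right)$. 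This polynomially small error is then absorbed into the $O((\log n)^{-1/2})$ factor in the Chmutov--Pittel asymptotic because the target probability is $\Theta((\log n)^{-1/2})$ on the Gaussian window. That is the entire argument.

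Your proposed contour-integral transfer has a genuine gap that would be hard to fill. Moment estimates on $|\EE[V_{\mu_n}^k]-\EE[C_n^k]|$ control the Taylor coefficients of the probability generating functions at $z=1$, and hence $|\phi_{V_{\mu_n}}(z)-\phi_{C_n}(z)|$ only on a small disc around $z=1$; they say nothing about the difference of the PGFs elsewhere on the integration contour $|z|=1+r_n$. The standard LCLT machinery requires a separate ``minor arc'' estimate far from $z=1$, and neither your tail bounds nor your moment bounds supply one. Tail bounds confine the random variable, not the PGF: truncating $X$ to a window perturbs $\phi_X(z)$ by terms proportional to $|z|^n$, which is not controlled off the unit circle. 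Moreover, since the relevant window has width $\Theta(\log n)$, recovering a single local probability to accuracy $o((\log n)^{-1/2})$ from finitely many moments is on the edge of ill-posedness; ``ample margin'' is asserted but not demonstrated. The moral is that the right comparison tool between $V_{\mu_n}$ and $C_n$ at the level of local probabilities is the TVD bound already proved in the paper, not the moment bound, which was designed for expectations $\EE[p(V_{\mu_n})]$ rather than point probabilities.
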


Translation surfaces of genus $g$ can be partitioned further by grouping them by the combinatorial data of their cone points. In particular let $\calH(\alpha_1, \dots, \alpha_s)$ be the set of translation surfaces with $s$ cone points of angles $2\pi(\alpha_1+1), \dots, 2 \pi(\alpha_s+1)$. Such a set is called a \textbf{stratum}. In our next main result we find a sufficient criterion for when a random model for square-tiled surfaces ensures that the most likely stratum is the stratum with a single cone point with large angle. 

To make this more precise, consider a random model for choosing a labelled square-tiled surface with $n$ squares. By considering the horizontal and vertical gluings, from each such random surface we get two permutations $\sigma, \tau \in S_n$. Furthermore, considering the commutator $[\sigma,\tau]$, we obtain a random permutation in the alternating group $A_n$. Hence, a random model for choosing a labelled STS with $n$ squares induces a distribution on $A_n$. Let $P_n$ be this distribution and let $U_n$ be the uniform distribution on $A_n$. Then, we have the following result:

\begin{restatable}[Sufficient Criterion for Most Likely Stratum]{theorem}{sufficientcriterionmostlikelystratum}\label{thm:sufficientcriterion}
Let $P_n$ be as above. Assume that $P_n$ is constant on conjugacy classes of $S_n$ and for real number $s > 1$ satisfies the following equation:
\begin{equation}\label{eqn:l2bound}|A_n|\sum_{\sigma \in A_n}|P_n(\sigma) - U_n(\sigma)|^2 = O\left(\frac{1}{n^s}\right)\end{equation}
Then there exists some $N \in \mathbb{N}$ such that for all  $n > N$, the most probable stratum is $\mathcal{H}(n-1)$ if $n$ is odd, and $\mathcal{H}(n-2)$ if $n$ is even. 
\end{restatable}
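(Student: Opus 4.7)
The strategy is to reduce the claim to a comparison of cycle-type probabilities for the commutator $c = [\sigma,\tau] \in A_n$, and then to transfer the $L^2$ hypothesis via Cauchy--Schwarz. Since cone points of $S(\sigma,\tau)$ correspond bijectively to cycles of $c$, with a $k$-cycle contributing $\alpha = k-1$, the stratum of $S(\sigma,\tau)$ depends only on the cycle type of $c$: the stratum $\mathcal{H}(n-1)$ arises exactly when $c$ is an $n$-cycle, and $\mathcal{H}(n-2)$ exactly when $c$ has cycle type $(n-1,1)$. A $k$-cycle has sign $(-1)^{k-1}$, so $(n) \notin A_n$ when $n$ is even and $(n-1,1) \notin A_n$ when $n$ is odd; this is the origin of the parity split. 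Since $P_n$ is constant on $S_n$-conjugacy classes $\calK_\lambda$ (indexed by partitions $\lambda \vdash n$), the induced distribution on cycle types is $q_\lambda = P_n(\calK_\lambda)$ for even $\lambda$, while the uniform gives $u_\lambda = |\calK_\lambda|/|A_n| = 2/z_\lambda$ with $z_\lambda = \prod_i i^{m_i} m_i!$.

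Next I would show that the target $\lambda^*$ (equal to $(n)$ or $(n-1,1)$) is already the strict maximum of $u_\lambda$ on even cycle types, with gap of order $1/n$. A short enumeration shows the only partitions of $n$ with $z_\lambda < 2(n-3)$ are $(n)$ and $(n-1,1)$, so for $n$ large the runner-up is determined by restricting to cycle types with even sign: for $n$ odd it is $(n-3,2,1)$ with $u = 1/(n-3)$ (since $(n-2,2) \notin A_n$), while for $n$ even it is $(n-2,2)$ with $u = 1/(n-2)$ (since $(n-3,2,1), (n-2,1,1) \notin A_n$). The corresponding gaps $2/n - 1/(n-3) = (n-6)/(n(n-3))$ and $2/(n-1) - 1/(n-2) = (n-3)/((n-1)(n-2))$ are both $\Theta(1/n)$.

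Finally, I would exploit the $L^2$ hypothesis. Because $P_n$ and $U_n$ are constant on each $\calK_\lambda$, grouping terms by conjugacy class yields the chi-square identity
$$|A_n|\sum_{\sigma\in A_n}|P_n(\sigma)-U_n(\sigma)|^2 \;=\; \sum_{\lambda} \frac{(q_\lambda-u_\lambda)^2}{u_\lambda}.$$
The hypothesis then forces $|q_\lambda - u_\lambda| \leq \sqrt{u_\lambda}\cdot O(n^{-s/2})$, with the implicit constant uniform in $\lambda$. Since $u_{\lambda^*}$ and $u_{\mathrm{runner}}$ are both $\Theta(1/n)$, these errors are $O(n^{-(s+1)/2})$; monotonicity of $u \mapsto u + c\sqrt{u}$ gives $\max_{\lambda \neq \lambda^*} q_\lambda \leq u_{\mathrm{runner}} + O(n^{-(s+1)/2})$ and $q_{\lambda^*} \geq u_{\lambda^*} - O(n^{-(s+1)/2})$. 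The crux is that $s > 1$ makes these errors $o(1/n)$, strictly dominated by the $\Theta(1/n)$ gap, so $q_{\lambda^*} > q_\lambda$ for all $\lambda \neq \lambda^*$ when $n$ is large enough. The delicate point is exactly this exponent comparison: $s > 1$ is sharp, being just enough to overcome the $\sqrt{\,\cdot\,}$ loss in Cauchy--Schwarz against the $\Theta(1/n)$ gap.
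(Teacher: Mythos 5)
Your proposal is correct and follows essentially the same route as the paper: a uniform per-conjugacy-class deviation bound of order $O(n^{-(1+s)/2})$ extracted from the $L^2$ hypothesis (your chi-square grouping is equivalent to the paper's Cauchy--Schwarz step in Theorem \ref{thm:stratumprobabilities}, since both distributions are class functions), beaten for $s>1$ by the $\Theta(1/n)$ gap between the largest class of $A_n$ and all other even classes. The only substantive point you assert rather than prove --- that every partition other than $(n)$ and $(n-1,1)$ has $z_\lambda \geq 2(n-3)$ --- is exactly the content of the paper's Lemmas \ref{lem:coalescecycles} and \ref{lem:classesInSN}, and your parity bookkeeping and gap computations agree with Lemma \ref{lem:uniformBound}.
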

Theorem~\ref{thm:sufficientcriterion} can be applied in two cases, giving us a useful corollary in our case. In the Standard Model, we use the uniform distribution on $S_n \times S_n$ to pick a random pair of permutations $(\sigma, \tau)$ which induces (via the commutator $[\sigma, \tau]$) distribution $P_n$ on $A_n$. Likewise in the HR model with parameter $\alpha \in [0, \frac{1}{4}]$, we use a modified distribution to pick a pair of permutations which in turn induces a distribution $P_n$ on $A_n$ via the commutator. Both these induced distributions satisfy the hypothesis of Theorem~\ref{thm:sufficientcriterion}, yielding the following corollary:
\begin{restatable}[Most Likely Stratum]{corollary}{mostlikelystratum}\label{cor:mostlikelyStrataspecific} Let $\alpha \in [0, \frac{1}{4})$ and  $\mu\vdash n$ with at most $n^\alpha$ parts. Let $(\sigma, \tau) \sim \Unif(S_n \times S_n)$ or $\Unif(\calK_{\mu_n} \times S_n)$. There exists $N \in \NN$ such that for all $n > N$, $\Pr(S(\sigma, \tau) \in \calH(\alpha))$ is maximized when $\calH(\alpha) = \calH(n-1)$ (if $n$ is odd) and $\calH(\alpha) = \calH(n-2)$ (if $n$ is even.)
In other words, the most likely stratum in both the standard model and the $\alpha$-based HR model is either $\calH(n-1)$ if $n$ is odd or $\calH(n-2)$ if $n$ is even. 
\end{restatable}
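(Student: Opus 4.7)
The plan is to verify the hypothesis \eqref{eqn:l2bound} of Theorem \ref{thm:sufficientcriterion} for the commutator distribution $P_n$ on $A_n$ arising from each model. Conjugation invariance of $P_n$ is immediate in both cases, since both $S_n \times S_n$ and $\calK_{\mu_n} \times S_n$ are preserved under simultaneous conjugation in each coordinate, and the commutator is a conjugation-equivariant operation. So the entire task reduces to establishing the $L^2$ bound.

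I would compute $P_n(g)$ via character theory. In the standard model, Frobenius's commutator identity gives $P_n^{std}(g) = |S_n|^{-1}\sum_\chi \chi(g)/\chi(1)$. In the HR model, starting from the observation that for fixed $x \in \calK_{\mu_n}$ the set $\{y \in S_n : [x,y] = g\}$ has size $|C_{S_n}(x)|$ when $g^{-1}x$ is conjugate to $x$ and is empty otherwise, and then invoking the class-algebra structure constants for the product $\calK_{\mu_n} \cdot \calK_{\mu_n}$, one derives
\[
P_n(g) \;=\; \frac{1}{|S_n|} \sum_\chi \frac{\chi(\calK_{\mu_n})^2\, \chi(g)}{\chi(1)}.
\]
For $g \in A_n$, the trivial and sign characters each contribute $1/|S_n|$, summing exactly to $U_n(g) = 2/|S_n|$, so $P_n - U_n$ involves only characters $\chi \notin \{\mathbf{1}, \text{sgn}\}$. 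Expanding the square in \eqref{eqn:l2bound} and applying character orthogonality on $A_n$ (taking into account that $\chi$ and $\chi \cdot \text{sgn}$ agree on $A_n$ and so give paired equal contributions) yields
\[
|A_n| \sum_{g \in A_n} |P_n(g) - U_n(g)|^2 \;=\; \tfrac{1}{2}\!\!\sum_{\chi \notin \{\mathbf{1}, \text{sgn}\}} \frac{\chi(\calK_{\mu_n})^4}{\chi(1)^2},
\]
with $\chi(\calK_{\mu_n})^4$ replaced by $1$ in the standard-model computation.

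For the standard model, $\sum_{\chi \notin \{\mathbf{1}, \text{sgn}\}} \chi(1)^{-2} = O(1/n^2)$ follows routinely from the fact that the only nontrivial characters of dimension below $\binom{n-1}{2}$ are $\chi^{(n-1,1)}$ and $\chi^{(2,1^{n-2})}$, each of dimension $n-1$; so $s = 2 > 1$ works. The main obstacle, and the source of the restriction $\alpha < 1/4$, is bounding the HR sum $\sum_\chi \chi(\calK_{\mu_n})^4/\chi(1)^2$. Since $\calK_{\mu_n}$ has at most $n^\alpha$ parts, any element of the class has at most $n^\alpha$ fixed points and therefore support at least $n - n^\alpha$. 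This places us firmly in the regime where Larsen--Shalev type character estimates apply, giving $|\chi^\lambda(\calK_{\mu_n})/\chi^\lambda(1)| \le \chi^\lambda(1)^{-c + o(1)}$ for a uniform $c > 0$. Feeding this into the sum and separating contributions by character dimension (using the identity $\sum_\chi \chi(\calK_{\mu_n})^2 = |S_n|/|\calK_{\mu_n}|$ to control the tail) gives a bound of the form $O(n^{-s})$ with $s > 1$ precisely when $\alpha < 1/4$. Theorem \ref{thm:sufficientcriterion} then delivers the conclusion for both models.
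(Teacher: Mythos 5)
Your route is essentially the paper's: you verify hypothesis \eqref{eqn:l2bound} of Theorem \ref{thm:sufficientcriterion} by expanding $P_n$ in irreducible characters (as in Lemmas \ref{lem:sigmaProbExpression} and \ref{lem:sigmaTowardUniform}) and bounding $\sum_{\lambda}|\chi^\lambda(\calK_{\mu_n})|^4/\chi^\lambda(1)^2$, obtaining $s=2$ for the standard model and $s=2-4\alpha-4\epsilon>1$ for the HR model, which is exactly how the paper deduces the corollary from Theorems \ref{thm:larsenshalevbound} and \ref{thm:liebeckshalev}. One correction: the character estimate you must invoke is the cycle-count form $|\chi(\sigma)|\le\chi(1)^{\alpha+\epsilon}$ for $\sigma$ with at most $n^\alpha$ cycles (Theorem \ref{thm:larsenshalevbound}), which applies directly since $\mu_n$ has at most $n^\alpha$ parts; a support-based bound with a ``uniform $c>0$'' cannot by itself yield the $\alpha<\tfrac14$ threshold (for instance, fixed-point-free involutions have full support yet character values of size roughly $\chi(1)^{1/2}$, for which the resulting sum would not be $O(n^{-s})$ with $s>1$).
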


We also deduce some geometric information about random STSs from our model. One way to study translation surfaces is via \textbf{saddle connections} which are straight lines (i.e. local geodesics for the flat metric) on the surface between cone point(s), without any cone points in the interior. Associated to a saddle connection is a vector in $\RR^2$ called its \textbf{holonomy vector} which records its Euclidean $x$ and $y$ displacement. We denote the set of holonomy vectors of a surface $S$ as $\Hol(S) \subseteq \RR^2$. Given a square-tiled surface, $S$, built out of unit squares, it is easily seen that $\Hol(S) \subseteq \ZZ^2$ since the cone points can only appear at the vertices. Marking a point, we note that the holonomy vectors of the standard square-torus is the set $\Hol(\TT)=\{(p,q) \in \ZZ^2|\gcd(p,q) = 1\}$. A surface $S$ is called a \textbf{holonomy torus} if $\Hol(S) = \Hol(\TT)$ and a \textbf{visibility STS} if $\Hol(S) \supseteq \Hol(\TT)$. Our next result concerns the probability of a random STS being one of these two. 

\begin{restatable}[Holonomy Theorem]{theorem}{holonomy} \label{thm:holonomy} 
   Let $\alpha \in [0,\frac{1}{2})$ be fixed and let $\mu_n \vdash n$ be a partition of $n$ with at most $n^\alpha$ parts. Let $(\sigma, \tau) \sim \Unif(\calK_{\mu_n} \times S_n)$. Then,

    \begin{enumerate} 
    \item The probability that $S(\sigma, \tau)$ is a holonomy torus is asymptotically  $\frac{1}{e}$. More precisely, for any $\epsilon > 0$ such that $\alpha + \epsilon < \frac{1}{2}$,
     $$\left|\Pr(S(\sigma, \tau) \text{ is a holonomy torus}) - \frac{1}{e}\right| = O\left(\frac{1}{n^{1-2\alpha-2\epsilon}}\right)$$
    
    \item The probability that $S(\sigma, \tau)$ is a visibility STS is asymptotically 1. More precisely, 
    $$\Pr(S(\sigma, \tau) \text{ is a visibility STS}) = 1 - O\left(\frac{n^2}{2^{n/2}}\right)$$   
    \end{enumerate}
\end{restatable}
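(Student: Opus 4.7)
The plan is to reduce both parts of the theorem to combinatorial statements about the commutator $[\sigma,\tau] := \sigma\tau\sigma^{-1}\tau^{-1}$, then exploit the near-uniformity of this commutator's distribution on $A_n$ that already underlies Theorem \ref{thm:moments}. The key observation is that vertices of $S(\sigma,\tau)$ correspond bijectively to cycles of $[\sigma,\tau]$, with a cycle of length $k$ producing a vertex of cone angle $2\pi k$; in particular, fixed points of $[\sigma,\tau]$ correspond exactly to the regular (non-conical) vertices. Using this I claim that $S(\sigma,\tau)$ is a holonomy torus if and only if it is a visibility STS and $[\sigma,\tau]$ is a derangement. For the forward direction, any regular vertex $v$ lies on some rational-direction flow line between two consecutive cone points, giving a saddle connection whose holonomy is non-primitive (with $v$ being an interior lattice point of the segment). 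For the reverse, visibility gives $\Hol(S) \supseteq \Hol(\TT)$, while the absence of regular vertices forbids non-primitive holonomies, since any interior lattice point of a putative saddle segment would be a cone point.

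Part (2) then follows from a pigeonhole argument. For each primitive $(p,q)$, straight-line flow by $(p,q)$ starting at the bottom-left corner of square $i$ ends at the bottom-left corner of a unique square $\Phi_{p,q}(i)$, defining a permutation $\Phi_{p,q} \in S_n$ that is a specific word in $\sigma, \tau$. The vector $(p,q)$ is realized as a saddle connection holonomy if and only if there exists some $i \notin F$ with $\Phi_{p,q}(i) \notin F$, where $F = \mathrm{fix}([\sigma,\tau])$. Writing $f = |F|$: if $f < n/2$ then $|F^c| > f$ and $\Phi_{p,q}(F^c)$, having size $n - f$, cannot fit inside $F$, forcing visibility of \emph{every} primitive $(p,q)$ simultaneously. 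Thus $\Pr(\neg\text{visibility}) \leq \Pr(f \geq n/2)$, and a factorial-moment tail bound on $f$ (which is close to Poisson$(1)$ by the near-uniformity of $[\sigma,\tau]$ on $A_n$) yields the claimed $O(n^2/2^{n/2})$; in fact a much sharper super-exponential bound is available from this argument.

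For Part (1), the characterization combined with Part (2) gives $\Pr(\text{holonomy torus}) = \Pr(f = 0) + O(n^2/2^{n/2})$, so it remains to show $\Pr(f = 0) = 1/e + O(n^{-1+2\alpha+2\epsilon})$. This is the main obstacle, since Theorem \ref{thm:moments} controls polynomial moments of the \emph{total} cycle count $V_{\mu_n}$ rather than the fixed-point count $f$ directly. I would proceed by inclusion-exclusion,
\[
\Pr(f=0) \;=\; \sum_{k=0}^{n} (-1)^k \, \EE\!\left[\binom{f}{k}\right],
\]
and compare each factorial moment $\EE\!\left[\binom{f}{k}\right]$ with its uniform-on-$A_n$ counterpart $1/k!$. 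The comparison uses the same $L^2$-type closeness of the law of $[\sigma,\tau]$ to uniform on $A_n$ that implicitly drives Theorem \ref{thm:moments}, together with character-theoretic estimates that respect the conjugacy-class constraint $\sigma \in \calK_{\mu_n}$. Truncating the inclusion-exclusion at $k = \Theta(\log n)$ then produces the claimed error $O(n^{-1+2\alpha+2\epsilon})$ and completes the proof.
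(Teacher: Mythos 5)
There are two genuine gaps. First, your claimed equivalence ``$S(\sigma,\tau)$ is a holonomy torus if and only if it is a visibility STS and $[\sigma,\tau]$ is a derangement'' is false. A fixed point of $[\sigma,\tau]$ that is also fixed by both $\sigma$ and $\tau$ is an isolated unit-torus component, and such components contribute exactly $\Hol(\TT)$; for example $\sigma=\tau=\Id$ gives $n$ disjoint unit tori, a holonomy torus whose commutator is certainly not a derangement. The correct criterion (Lemma \ref{lem:combcriterionforgeom}(1)) is ``derangement \emph{or} all fixed points of $[\sigma,\tau]$ are common fixed points of $\sigma$ and $\tau$,'' and the probability of the second alternative is not negligible at the scale you claim: it can be of order $n^{\alpha-1}$ (it is handled in the paper by noting such pairs generate a non-transitive subgroup and invoking Proposition \ref{prop:connectedness}), so your identity $\Pr(\text{holonomy torus})=\Pr(f=0)+O(n^2/2^{n/2})$ is quantitatively wrong, even though the omitted term happens to fit inside the final $O(n^{-(1-2\alpha-2\epsilon)})$ error. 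Your forward-direction argument breaks precisely on components with no cone points.

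Second, and more seriously, ``near-uniformity'' of the law of $[\sigma,\tau]$ cannot deliver the exponentially small bounds you invoke. The convergence to $\Unif(A_n)$ (Theorem \ref{thm:probabilityConvergenceReal}) is only at the polynomial rate $O(n^{-(1-2\alpha-2\epsilon)})$ in total variation, so from it alone $\Pr(f\geq n/2)$ can only be bounded by $O(n^{-(1-2\alpha-2\epsilon)})$, not by $O(n^2/2^{n/2})$; asserting that $f$ is ``close to Poisson$(1)$'' does not survive at this exponential scale. The paper instead proves a genuine large-deviation estimate for the cycle count under $P_{\mu_n}$ itself (Lemma \ref{lem:largedeviations}, via the probability generating function of Proposition \ref{prop:ProbGenFcn} and character/content computations), and uses $f\leq C_{\mu_n}$. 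The same problem infects your route to $\Pr(f=0)\approx 1/e$: comparing factorial moments $\EE\bigl[\binom{f}{k}\bigr]$ under $P_{\mu_n}$ and $U_n$ through an $L^2$ or TV bound incurs errors of size roughly $\binom{n}{k}\cdot n^{-(1-2\alpha-2\epsilon)}$ per term, which is useless, and you do not supply the finer character computation that would be needed. This detour is also unnecessary: since the derangement event is just an event in $A_n$, total variation gives $|\Pr(f=0)-\Pr_{U_n}(\text{derangement})|\leq \|P_{\mu_n}-U_n\|$ directly, which together with Proposition \ref{prop:AnDerangement} is exactly how the paper concludes part (1).
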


\subsection{Standard vs. HR Models}
The results Theorem \ref{thm:moments}, Corollary \ref{cor:expectedgenus}, Theorem \ref{thm:genusdist} and Theorem \ref{thm:holonomy} are HR-model counterparts to Theorem 2.4, Corollary 2.5, Theorem 2.6, and the Holonomy Theorem proven in \cite{ShresRandom} for the standard model. While some of the techniques between the two models are similar, in this paper we've had to obtain newer versions of previous lemmas to carefully adapt for the new model. For instances, see Lemma~\ref{lem:NumberOfProductPairs} and Lemma~\ref{lem:largedeviations}. Likewise, we also use stronger estimates on character values. For instance, see Theorem~\ref{thm:larsenshalevbound}. 

Theorem~\ref{thm:sufficientcriterion} can be more generally applied to other potential random models for STSs. We apply it to obtain Corollary~\ref{cor:mostlikelyStrataspecific} which is a result that concerns both the Standard and HR models. These latter results do not have counterparts in \cite{ShresRandom}. Additionally, although Theorem~\ref{thm:singleBanded} concerns the Standard model, it is a new result in this paper.


\subsection{Broader context}

The topology of random surfaces has had a rich history. Gamburd-Makover \cite{GamMak} and Pippenger-Schleich \cite{PipSchleich} studied surfaces built out of $n$ hyperbolic ideal triangles and computed the expected genus as  $n/4-\log(n)/2 + O(1)$. Similarly, Fleming-Pippenger \cite{FlemPip} considered surfaces built out of $N/k$ $k$-gons with total $N$ sides and proved a theorem analogous to our Theorem \ref{thm:moments} on moments of vertices. Chmutov-Pittel \cite{ChmuPit} considered surfaces built out of $n$ polygons with total number of sides $N$ and proved a theorem analogous to our Theorem \ref{thm:genusdist}, in particular, showing that the expected genus is asymptotic to $(N/2 -n-\log N)/2$. Combined works of Harer-Zagier\cite{HarZag}, Linial-Nowik \cite{LinNow} and Chmutov-Pittel \cite{ChmuPitt2}, consider a random surface obtained by gluing sides of an $n$-gon and compute the asymptotic distribution of the genus, proving that the expected genus is $n/2 - \Theta(\ln n)$.

The geometric statistics of random surfaces has also been well studied. Brooks-Makover \cite{BrooksMak}, Guth-Parlier-Young \cite{GuthParYoung}, Mirzakhani-Petri \cite{MirPet}, Petri \cite{Pet1, Pet2}, Petri-Thale \cite{PetTha}, Budzinski-Curien-Petri \cite{BudCurPet2}, Liu-Petri \cite{LiuPet} and others have studied Cheeger constants, systoles, length spectrum, pants length, largest embedded balls, diameter etc. of random (not necessarily translation) surfaces. 

Random translation surfaces have also gained some attention recently. Any connected component of a stratum of translation surfaces can be equipped with a measure called the Masur-Veech measure. With respect to this measure Masur-Rafi-Randecker \cite{MasRafRan} studied the expected covering radius of a transation surface in a given stratum and showed that it is bounded above by $20 \sqrt{\log (g)/g}$ when the genus $g$ is large. 

Since square-tiled surfaces are branched covers of the torus, the study of random STSs also falls in the general framework of the study of random covers of manifolds. Recently, Magee-Puder \cite{MagPud} and Magee-Naud-Puder \cite{MagNaudPud} studied random covers of compact hyperbolic surfaces while Baker-Petri \cite{BakPet} studied random covers of torus knot complements. 

Likewise, saddle connections and holonomy vectors are also well studied and are important objects in the theory of translation surfaces. For instance, it has been shown that any stratum of translation surfaces itself is an orbifold with local coordinates given by holonomy vectors of saddle connections that form a basis for relative homology. See, for instance, Section 2.3 of \cite{ForMat} for more details. Moreover, the set of holonomy vectors of a translation surface has been used by Smilie-Weiss \cite{SmiWeiss} and Forester-Tang-Tao \cite{ForTangTao} to characterize Veech surfaces. 

Given their importance, naturally there has been wide interest in the statistics of saddle connections on translation surfaces. Combined works of Masur \cite{MasLowerBound, MasGrowthRate}, Eskin-Masur \cite{EskMas} and Nevo-R\"uhr-Weiss \cite{NevoRuhrWeiss} have established (and refined) that in any stratum, almost every translation surface has an exact quadratic asymptotic growth for the number of saddle connections up to length $R$ with the constant solely depending on the stratum. More recent work on counting saddle connections and holonomy vectors include Athreya-Fairchild-Masur \cite{AthFairMas} who prove quadratic growth rate for pairs of saddle connections up to length $R$ with bounded cross product of their holonomy vectors. 

\subsection{Struture of the Paper}
In Section \ref{sec:background} we give the necessary background for the proofs of our results. In particular, we give a brief introduction to translation and square-tiled surfaces followed by a concrete definition of the random models we consider. Since these models are based on permutations, we also introduce key statistics of random permutations that will be useful in our computations. The representation theory of the symmetric and alternating groups play a vital role in our proofs so we also state relevant foundational definitions and past results. As permutations of $n$ are intimately tied to integer partitions of $n$, we also briefly introduce some combinatorial tools pertaining to these. 

In Section~\ref{sec:singleBanded} we prove that in the Standard model, the asymptotic probability that a random STS is single-banded is 1 (Theorem \ref{thm:singleBanded}). In Section \ref{sec:convergenceofdist} we consider the distribution $P_n$ on $A_n$ that is induced by our HR model. In particular we prove that this distribution (when $\alpha < 1/2$) converges to the uniform distribution on $A_n$. Section \ref{sec:moments} is devoted to proving Theorem \ref{thm:moments}. In Section \ref{sec:topologicalstatistics} we prove Theorems \ref{thm:genusdist}, \ref{cor:expectedgenus} and Corollary \ref{cor:mostlikelyStrataspecific}. In this section we also show that the HR model produces connected square-tiled surfaces asymptotically almost surely. Finally Section \ref{sec:holonomy} is devoted to proving Theorem \ref{thm:holonomy}.

\subsection{Acknowledgements} We would like to express our gratitude to Jane Wang for helpful conversations regarding the project. Some of the work in this paper was done as part of authors Fitzhugh, Schondorf, Vander Ploeg Fallon, Zeng's Integrated Comprehensive Exercise (i.e. senior capstone project) for Mathematics at Carleton College, MN, USA. Hence, we also acknowledge the Carleton College Mathematics Department for the opportunity to work on this project. 

\section{Background}\label{sec:background}
\subsection{Translation surfaces and strata}

We start with brief background on translation surfaces. For a more complete reference, please see \cite{Zor1} or \cite{Mas}.

A \textbf{translation surface} is a pair $(X, \omega)$ where $X$ is a Riemann surface and $\omega$ is a holomorphic 1-form. Alternately (and equivalently), a translation surface is a finite collection of Euclidean polygons, embedded in the plane with the embedding fixed only up to translations, and sides of the polygons identified in parallel opposite pairs up to equivalence by cut, translate and paste operations. See Figures \ref{fig:sc} for examples. 

\begin{figure}[h!]
\begin{minipage}{0.45\textwidth}
    \centering
    \resizebox{5cm}{!}{
\begin{tikzpicture}[hex/.style=
 {shape=regular polygon, regular polygon sides=6, draw, minimum width=2cm}]
 
 \node[hex] at (0:0cm) {};
 \node[hex] at (-30:1.72cm) {};


 \draw[] (2*30:1cm) circle [radius=2pt];
 \draw[] (6*30:1cm) circle [radius=2pt];
  \draw[] (10*30:1cm) circle [radius=2pt];
 \draw[shift=({1.5,-0.86cm})] (2*30:1cm) circle [radius=2pt];
 \draw[shift=({1.5,-0.86cm})] (10*30:1cm) circle [radius=2pt];

  \draw[fill=black] (0*30:1cm) circle [radius=2pt];
  \draw[fill=black] (4*30:1cm) circle [radius=2pt];
  \draw[fill=black] (8*30:1cm) circle [radius=2pt];

  \draw[fill=black, shift=({1.5,-0.86cm})] (0*30:1cm) circle [radius=2pt];
  \draw[fill=black, shift=({1.5,-0.86cm})] (8*30:1cm) circle [radius=2pt];
 
 
 	\draw[thin] (0,0.86) -- ++(-0.15,0.15);
  \draw[thin] (0,0.86) -- ++(-0.15,-0.15);
  
  \draw[thin] (1.6,-1.72) -- ++(-0.15,0.15);
  \draw[thin] (1.6,-1.72) -- ++(-0.15,-0.15);

 
 \draw[thin] (-0.1,-0.86) -- ++(-0.15,0.15);
  \draw[thin] (-0.1,-0.86) -- ++(-0.15,-0.15);
 \draw[thin] (0.1,-0.86) -- ++(-0.15,0.15);
  \draw[thin] (0.1,-0.86) -- ++(-0.15,-0.15);
  
  	 \draw[thin] (1.4,0) -- ++(-0.15,0.15);
  \draw[thin] (1.4,0) -- ++(-0.15,-0.15);
   \draw[thin] (1.6,0) -- ++(-0.15,0.15);
  \draw[thin] (1.6,0) -- ++(-0.15,-0.15);
  

 \draw[thin] (1*30:0.71cm) -- (1*30:1.01cm);
\draw[thin][shift=({0,-1.72cm})] (1*30:0.71cm) -- (1*30:1.01cm);


\node[rotate=-60] at (7*30:0.86cm){$\vee$}; 
\node[shift=({1.5,-0.86cm}),rotate=-60 ] at (1*30:0.86cm){$\vee$}; 


\node[rotate=150] at (5*30:0.86cm){$\sim$};
 \node[shift=({1.5,-0.86cm}),rotate=150 ] at (11*30:0.86cm){$\sim$}; 

\end{tikzpicture}
}
\end{minipage}
\begin{minipage}{0.45\textwidth}
\centering
\resizebox{5cm}{!}{
\begin{tikzpicture}[oct/.style=
 {shape=regular polygon, regular polygon sides=8, draw, minimum width=2cm},decoration={
    markings,
    mark=at position 0.5 with {\arrow{>}}}]
 
 \node[oct] at (-6*22.5:1.85cm) {};

  \begin{scope}[shift={(-6*22.5:1.85cm)}]

  \draw (4*22.5:0.8cm) -- (4*22.5:1.05cm);
  \draw (12*22.5:0.8cm) -- (12*22.5:1.05cm);
  
  \draw [shift={(0,0.025)}] (8*22.5:0.8cm) -- (8*22.5:1.05cm);
  \draw [shift={(0,-0.025)}] (8*22.5:0.8cm) -- (8*22.5:1.05cm);
  
  \draw [shift={(0,0.025)}] (0:0.8cm) -- (0:1.05cm);
  \draw [shift={(0,-0.025)}] (0:0.8cm) -- (0:1.05cm);

  \draw [shift={(-0.05,0.05)}](2*22.5:0.8cm) -- (2*22.5:1.05cm);
 \draw (2*22.5:0.8cm) -- (2*22.5:1.05cm);
 \draw [shift={(0.05,-0.05)}](2*22.5:0.8cm) -- (2*22.5:1.05cm);

  \draw [shift={(-0.05,0.05)}](-6*22.5:0.8cm) -- (-6*22.5:1.05cm);
 \draw (-6*22.5:0.8cm) -- (-6*22.5:1.05cm);
 \draw [shift={(0.05,-0.05)}](-6*22.5:0.8cm) -- (-6*22.5:1.05cm);

 \draw [shift={(0.025,0.025)}](6*22.5:0.8cm) -- (6*22.5:1.05cm);
 \draw [shift={(-0.025,-0.025)}](6*22.5:0.8cm) -- (6*22.5:1.05cm);
 
 \draw [shift={(0.075,0.075)}](6*22.5:0.8cm) -- (6*22.5:1.05cm);
 \draw [shift={(-0.075,-0.075)}](6*22.5:0.8cm) -- (6*22.5:1.05cm);

 \draw [shift={(0.025,0.025)}](-2*22.5:0.8cm) -- (-2*22.5:1.05cm);
 \draw [shift={(-0.025,-0.025)}](-2*22.5:0.8cm) -- (-2*22.5:1.05cm);
 
 \draw [shift={(0.075,0.075)}](-2*22.5:0.8cm) -- (-2*22.5:1.05cm);
 \draw [shift={(-0.075,-0.075)}](-2*22.5:0.8cm) -- (-2*22.5:1.05cm);

  \foreach \x in {1,3,5,7,9,11,13,15}{
  \draw[fill=black] (\x*22.5:1cm) circle [radius=1.5pt];}

  \draw[dotted,postaction={decorate}] (11*22.5:1cm) -- (7*22.5:1cm);

   \draw[dashed,postaction={decorate}] (11*22.5:1cm) -- (4*22.5:1cm);
     \draw[dashed,postaction={decorate}] (12*22.5:1cm) -- (3*22.5:1cm);
  \end{scope}

\end{tikzpicture}
}
\end{minipage}

 \caption{Two translation surfaces. On the left is a surface formed by two hexagons on the right is one formed by identification of opposite sides of an octagon. Edge identifications are indicated by matching edge decorations. Cone points are represented by decorated vertices -- matching decorations mean that the vertices are identified under the edge gluings. The surfaces live in $\calH(1,1)$ and $\calH(2)$ respectively. The octagon surface also has two saddle connections on it (represented by dotted and dashed lines). Taking the Octagon sidelength to be 1, the holonomy vector corresponding to the dotted saddle connection is $\left(-\frac{1}{\sqrt{2}}, 1+\frac{1}{\sqrt{2}}\right)$ and the holonomy vector corresponding to the dashed saddle connection is $\left(1, 2+2\sqrt{2}\right)$. }
 \label{fig:sc}

\end{figure}

Geometrically, since translation surfaces are made up of Euclidean polygons, they are locally flat except for possibly at finitely many \textbf{cone points} or \textbf{singularities} (typically coming from vertices of the polygons) where the angle is $2\pi \ell$ for some integer $\ell > 1$. In the complex analytic point of view, a cone point of angle $2 \pi \ell$ corresponds to a zero of the 1-form of order $\ell -1$.

One way to organize translation surfaces is according to their cone point data specified by an unordered $s$-tuple $(\alpha_1, \dots, \alpha_s)$ describing $s$ cone points of angles $2 \pi (\alpha_i+1)$. In this way, the $\alpha_i$ record the angle excess at each cone point. Such a collection of translation surfaces sharing the same cone point data is called a \textbf{stratum} and denoted $\calH(\alpha)$ where $\alpha = (\alpha_1, \dots, \alpha_s)$. Since translation surfaces are flat everywhere away from the cone points, all the curvature is concentrated at the cone points so that a genus $g$ translation surface with cone point data $(\alpha_1, \dots, \alpha_s)$ satisfies a combinatorial Gauss-Bonnet relation: $\sum_{i=1}^s \alpha_i = 2g -2$. Hence, genus $g$ translation surfaces are partitioned into various strata and it is also known that any partition of $2g-2$ into positive integers determines a non-empty stratum. 

A \textbf{saddle connection} on a translation surface is any local geodesic curve (i.e. straight line) joining cone point(s) with no cone point in the interior. The \textbf{holonomy vector} of a saddle connection is a vector in $\RR^2$ that records the Euclidean $x$ and $y$ displacement of the saddle connection. 

For surfaces without cone points, we simply mark a point in order to study their saddle connections. For instance, since the square-torus $\TT = \CC/\ZZ[i]$ does not have a cone point we mark the origin and consider saddle connections from the origin to itself. We denote the set of all holonomy vectors associated to a translation surface $X$ as $\Hol(X)$. For instance, $\Hol(\TT)$ is the set of primitive vectors in $\ZZ^2$. If the translation surface $(X, \omega)$ has multiple connected components, then we will take $\Hol(X)$ to be the union of the holonomy vectors of each component. 

A \textbf{cylinder} on a translation surface is an isometric copy of a Euclidean cylinder $(\RR/ c\ZZ) \times (0, h)$ with length (of the core curve) $c \in \RR_> 0$ and height $h\in \RR_>0$ whose boundary is a union of saddle connections. Cylinders are maximal in that there are no cone points in the interior. These also play an important role in the theory of translation surfaces and studying their deformations can give information about the structure of the ambient stratum of translation surfaces. 

\subsection{Square-tiled surfaces} \label{sec:STS}

A \textbf{square-tiled surface} (STS) is a translation surface where the polygons comprising the surface are Euclidean unit squares embedded (up to translation) in the plane so that the sides are parallel to the axes. Note that since STSs are instances of translation surfaces, the sides are glued in parallel opposite pairs. Alternately, STSs are covers, possibly branched, of the standard square-torus, $\TT = \CC/\ZZ[i]$, where the branching is precisely over the origin. The number of squares used in the polygonal description corresponds to the degree of the cover. Let $\STS_n$ denote the collection of all square-tiled surfaces with $n$ squares. We do not require a surface in $\STS_n$ to be connected. 

Square-tiled surfaces can also be described combinatorially using pairs of permutations. For this, we start by labelling the squares of  a given $S \in \STS_n$ by $\{1, \dots, n\}$. To such a labelling we can associate a pair of permutations $\sigma, \tau \in S_n$, where $\sigma(i) = j$ if and only if square $j$ is glued to the right of square $i$ and $\tau(i) = j$ if and only if square $j$ is glued to the top of square $i$. See Figure \ref{fig:introSTS}. We will denote by $S(\sigma, \tau)$, the labelled square-tiled surface with permutation pair $(\sigma, \tau)$. If we change the labelling of the squares using a permutation $\pi$, the resulting permutation pair will be simultaneously conjugated by $\pi$. 

\begin{figure}
\begin{minipage}{0.33\textwidth}
\centering
\resizebox{5.5cm}{!}{
\begin{tikzpicture}
    [sq/.style=
  {shape=regular polygon, regular polygon sides=4, draw, minimum width=1.414cm}, rect/.style={shape=regular polygon, regular polygon sides=4, draw, minimum width=1pt}]
  
  

   \node[sq] at (2,1){};
  \node[sq] at (0,0){};
  
   \node[sq] at (1,0){};
   \node[sq] at (-1,0){};
   \node[sq] at (1,1){};
  
   \node[sq] at (-1,1){};
   \node[sq] at (-2,1){};

    \foreach \x in {3,5}{
   \draw[fill=black] (\x*45:0.707cm) circle [radius=2pt];
   \draw[shift=({-2,1}),fill=black] (\x*45:0.707cm) circle [radius=2pt];}
   \draw[shift=({-1,1}),fill=black] (1*45:0.707cm) circle [radius=2pt];
    \foreach \x in {1,7}{
    \draw (\x*45:0.607cm) rectangle (\x*45:0.807cm);
    \draw[shift=({2,1})] (\x*45:0.607cm) rectangle (\x*45:0.807cm);}
    \draw[shift=({1,1})](3*45:0.607cm) rectangle (3*45:0.807cm);

    \foreach \x in {3, 5}{
    \draw[shift=({-1,0})] (\x*45:0.707cm) circle [radius=2.5pt];
    }
    \foreach \x in {1, 7}{
    \draw[shift=({1,0})] (\x*45:0.707cm) circle [radius=2.5pt];
    }
    \draw[shift=({-1,1})] (3*45:0.707cm) circle [radius=2.5pt];
  \draw[shift=({1,1})] (1*45:0.707cm) circle [radius=2.5pt];

    \draw[thin] (0,0.35) -- ++(0,0.3);
    \draw[thin] (0,-0.35) -- ++(0,-0.3);

    \draw[thin] (-1.05,1.35) -- ++(0,0.3);
    \draw[thin] (-1.05,-0.35) -- ++(0,-0.3);
    \draw[thin] (-0.95,1.35) -- ++(0,0.3);
    \draw[thin] (-0.95,-0.35) -- ++(0,-0.3);
  
    \draw[thin] (1,1.35) -- ++(0,0.3);
    \draw[thin] (1,-0.35) -- ++(0,-0.3);
    \draw[thin] (1.05,1.35) -- ++(0,0.3);
    \draw[thin] (1.05,-0.35) -- ++(0,-0.3);
    \draw[thin] (0.95,1.35) -- ++(0,0.3);
    \draw[thin] (0.95,-0.35) -- ++(0,-0.3);

  \draw[thin] (-1.5,0.1) -- ++(-0.15,-0.15);
  \draw[thin] (-1.5,0.1) -- ++(0.15,-0.15);
  \draw[thin] (1.5,0.1) -- ++(-0.15,-0.15);
  \draw[thin] (1.5,0.1) -- ++(0.15,-0.15);


  \draw[thin] (-2.5,0.9) -- ++(-0.15,-0.15);
  \draw[thin] (-2.5,0.9) -- ++(0.15,-0.15);
  \draw[thin] (-2.5,1.1) -- ++(-0.15,-0.15);
  \draw[thin] (-2.5,1.1) -- ++(0.15,-0.15);
  
  \draw[thin] (-0.5,0.9) -- ++(-0.15,-0.15);
  \draw[thin] (-0.5,0.9) -- ++(0.15,-0.15);
  \draw[thin] (-0.5,1.1) -- ++(-0.15,-0.15);
  \draw[thin] (-0.5,1.1) -- ++(0.15,-0.15);

\draw[thin] (0.5,1.1) -- ++(-0.15,-0.15);
  \draw[thin] (0.5,1.1) -- ++(0.15,-0.15);
  \draw[thin] (0.5,1) -- ++(-0.15,-0.15);
  \draw[thin] (0.5,1) -- ++(0.15,-0.15);
  \draw[thin] (0.5,0.9) -- ++(-0.15,-0.15);
  \draw[thin] (0.5,0.9) -- ++(0.15,-0.15);

  \draw[thin] (2.5,1.1) -- ++(-0.15,-0.15);
  \draw[thin] (2.5,1.1) -- ++(0.15,-0.15);
  \draw[thin] (2.5,1) -- ++(-0.15,-0.15);
  \draw[thin] (2.5,1) -- ++(0.15,-0.15);
  \draw[thin] (2.5,0.9) -- ++(-0.15,-0.15);
  \draw[thin] (2.5,0.9) -- ++(0.15,-0.15);

\node[] at (-2, 1.5){$\vee$};
\node[] at (-2, 0.5){$\vee$};

\node[rotate=90] at (2, 1.5){$\sim$};
\node[rotate=90] at (2, 0.5){$\sim$};

\end{tikzpicture}
}
    
\end{minipage}
\begin{minipage}{0.33\textwidth}
\centering
\resizebox{4cm}{!}{
\begin{tikzpicture}
    [sq/.style=
  {shape=regular polygon, regular polygon sides=4, draw, minimum width=1.414cm}]
  

\node[sq] at (-1,0){};
   \node[] at (0,0){};
   \node[sq] at (1,0){};
    \node[sq] at (0,1){};
\node[sq] at (0,-1){};
   \foreach \x in {1,3,5,7}{
   \draw[fill=black] (\x*45:0.707cm) circle [radius=2pt];}
   
  \draw[thin] (0,1.35) -- ++(0,0.3);
  \draw[thin] (0,-1.35) -- ++(0,-0.3);
  
  \draw[thin] (-1.05,0.35) -- ++(0,0.3);
  \draw[thin] (-1.05,-0.35) -- ++(0,-0.3);
  \draw[thin] (-0.95,0.35) -- ++(0,0.3);
  \draw[thin] (-0.95,-0.35) -- ++(0,-0.3);
  
  \draw[thin] (1,0.35) -- ++(0,0.3);
  \draw[thin] (1,-0.35) -- ++(0,-0.3);
  \draw[thin] (1.05,0.35) -- ++(0,0.3);
  \draw[thin] (1.05,-0.35) -- ++(0,-0.3);
  \draw[thin] (0.95,0.35) -- ++(0,0.3);
  \draw[thin] (0.95,-0.35) -- ++(0,-0.3);
  
  \draw[thin] (-0.5,1.1) -- ++(-0.15,-0.15);
  \draw[thin] (-0.5,1.1) -- ++(0.15,-0.15);
  
  \draw[thin] (0.5,1.1) -- ++(-0.15,-0.15);
  \draw[thin] (0.5,1.1) -- ++(0.15,-0.15);
  
\draw[thin] (-0.5,-0.9) -- ++(-0.15,-0.15);
  \draw[thin] (-0.5,-0.9) -- ++(0.15,-0.15);
  \draw[thin] (-0.5,-1.1) -- ++(-0.15,-0.15);
  \draw[thin] (-0.5,-1.1) -- ++(0.15,-0.15);
  
  \draw[thin] (0.5,-0.9) -- ++(-0.15,-0.15);
  \draw[thin] (0.5,-0.9) -- ++(0.15,-0.15);
  \draw[thin] (0.5,-1.1) -- ++(-0.15,-0.15);
  \draw[thin] (0.5,-1.1) -- ++(0.15,-0.15);


\draw[thin] (-1.5,0.1) -- ++(-0.15,-0.15);
  \draw[thin] (-1.5,0.1) -- ++(0.15,-0.15);
  \draw[thin] (-1.5,0) -- ++(-0.15,-0.15);
  \draw[thin] (-1.5,0) -- ++(0.15,-0.15);
  \draw[thin] (-1.5,-0.1) -- ++(-0.15,-0.15);
  \draw[thin] (-1.5,-0.1) -- ++(0.15,-0.15);

  \draw[thin] (1.5,0.1) -- ++(-0.15,-0.15);
  \draw[thin] (1.5,0.1) -- ++(0.15,-0.15);
  \draw[thin] (1.5,0) -- ++(-0.15,-0.15);
  \draw[thin] (1.5,0) -- ++(0.15,-0.15);
  \draw[thin] (1.5,-0.1) -- ++(-0.15,-0.15);
  \draw[thin] (1.5,-0.1) -- ++(0.15,-0.15);

\end{tikzpicture}
}
\end{minipage}
\begin{minipage}{0.3\textwidth}
\centering
\resizebox{5cm}{!}{
\begin{tikzpicture}
    [sq/.style=
  {shape=regular polygon, regular polygon sides=4, draw, minimum width=1.414cm}]
  

   \node[sq] at (0,0){};
   \node[sq] at (1,0){};
    \node[sq] at (2,0){};
\node[sq] at (3,0){};

\node[sq] at (0,1){};
\node[sq] at (1,1){};
    \foreach \x in {3,5}{
    \draw[fill=black] (\x*45:0.707cm) circle [radius=2pt];
    \draw[fill=black][shift=({2,0})] (\x*45:0.707cm) circle [radius=2pt];
    \draw[fill=black][shift=({4,0})] (\x*45:0.707cm) circle [radius=2pt];
    }
   \draw[fill=black][shift=({0,1})] (1*45:0.707cm) circle [radius=2pt];
  \draw[thin] (0,1.35) -- ++(0,0.3);
  \draw[thin] (1,-0.35) -- ++(0,-0.3);
  
  \draw[thin] (-1.05+2,0.35+1) -- ++(0,0.3);
  \draw[thin] (-1.05+1,-0.35) -- ++(0,-0.3);
  \draw[thin] (-0.95+2,0.35+1) -- ++(0,0.3);
  \draw[thin] (-0.95+1,-0.35) -- ++(0,-0.3);
  
  \draw[thin] (1+1,0.35) -- ++(0,0.3);
  \draw[thin] (1+1,-0.35) -- ++(0,-0.3);
  \draw[thin] (1.05+1,0.35) -- ++(0,0.3);
  \draw[thin] (1.05+1,-0.35) -- ++(0,-0.3);
  \draw[thin] (0.95+1,0.35) -- ++(0,0.3);
  \draw[thin] (0.95+1,-0.35) -- ++(0,-0.3);
  
  \draw[thin] (-0.5,1.1) -- ++(-0.15,-0.15);
  \draw[thin] (-0.5,1.1) -- ++(0.15,-0.15);
  
  \draw[thin] (0.5+1,1.1) -- ++(-0.15,-0.15);
  \draw[thin] (0.5+1,1.1) -- ++(0.15,-0.15);
  
\draw[thin] (-0.5,-0.9+1) -- ++(-0.15,-0.15);
  \draw[thin] (-0.5,-0.9+1) -- ++(0.15,-0.15);
  \draw[thin] (-0.5,-1.1+1) -- ++(-0.15,-0.15);
  \draw[thin] (-0.5,-1.1+1) -- ++(0.15,-0.15);
  
  \draw[thin] (0.5+3,-0.9+1) -- ++(-0.15,-0.15);
  \draw[thin] (0.5+3,-0.9+1) -- ++(0.15,-0.15);
  \draw[thin] (0.5+3,-1.1+1) -- ++(-0.15,-0.15);
  \draw[thin] (0.5+3,-1.1+1) -- ++(0.15,-0.15);


  \node[] at (3, 0.5){$\vee$};
   \node[] at (3, -0.5){$\vee$};




\end{tikzpicture}
}

\end{minipage}
    \caption{Three examples of square-tiled surfaces. The surfaces belong to $\STS_7 \cap \calH(2,1,1)$, $\STS_5 \cap \calH(2)$, $\STS_6 \cap \calH(2)$ from left to right respectively. The surface on the left is a holonomy torus, the middle one is a visibility torus (but not a holonomy torus), and the one on the right is neither since it doesn't have a saddle connection corresponding to vector (1,0). The first two have 3 horizontal cyclinders each. The one on the right has two horizontal cylinders.}
    \label{fig:STSpermutation}
\end{figure}

Given an STS and its permutation description, one can obtain certain geometric and topological information about the surface using combinatorial properties of the pair of permutations. For instance, given $\sigma, \tau \in S_n$, the associated square-tiled surface $S(\sigma, \tau)$ is connected if and only if $\ideal{\sigma, \tau} \subseteq S_n$ is a transitive subgroup. 

The genus, and more precisely, the stratum of a square-tiled surface can also be deduced combinatorially from the associated horizontal and vertical permutations. Towards this end, consider $S= S(\sigma, \tau) \in \STS_n$ with $n$ squares. From the Euler characteristic formula, we have $\chi(S) = 2 - 2g = V - E + F$ where $V$, $E$ and $F$ is the number of vertices, edges and faces and $g$ is the genus (when $S$ is connected). Since we have $n$ squares and edges of squares are glued in pairs, $F = n$ and $E = 4n/2 = 2n$. Hence, the genus is determined by the number of vertices:
$$ g = \frac{n}{2} -\frac{V}{2} + 1 $$
Note that, more generally, when $S$ has $c$ components, $g$ can be rewritten as 

$$g = \left( \sum_{i=1}^c \frac{n_i}{2} + \frac{V_i}{2} + 1\right) - (c-1) = \left(\sum_{i=1}^c g_i \right)- (c-1)$$
where component $i$ has $n_i$ squares, $V_i$ vertices, and genus $g_i$.

The following proposition, stated in \cite{ShresRandom} relates the stratum of an STS $S (\sigma, \tau)$ with the commutator $\sigma \tau \sigma^{-1} \tau^{-1}$. Recall that a permutation $\pi \in S_n$ is said to have cycle type $(a_1, a_2, \dots, a_n)$ if $\pi$ has $a_i$ cycles of length $i$.

\begin{proposition}[Commutator to stratum data \cite{ShresRandom}] \label{prop:commutatorstratum}Let $S \in \STS_n$ be given by $(\sigma, \tau) \in S_n \times S_n$. If the cycle type of the commutator $[\sigma, \tau] = \sigma \tau \sigma^{-1} \tau^{-1}$ is $(a_1, a_2, \dots, a_n)$, then $S$ has $a_\ell$ vertices with angles $2\pi \ell$. In particular, the number of cycles of $[\sigma, \tau]$ is the number of vertices of $S(\sigma, \tau)$. 
\end{proposition}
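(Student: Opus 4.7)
The plan is to represent each vertex of $S(\sigma,\tau)$ by a canonical "bottom-left corner" in some square, and to compute explicitly how this representative changes as one walks around the vertex through the four quadrants of each square. Formally, for each $i \in \{1,\dots,n\}$ let $v_i$ denote the vertex of $S$ that sits at the bottom-left corner of square $i$. Because every corner of every square is the bottom-left corner of exactly one square, the map $i \mapsto v_i$ partitions $\{1,\dots,n\}$ into fibers, and the fiber over a given vertex $v$ has size equal to the number of $2\pi$-wedges of angle at $v$ (i.e.\ one over each full rotation at the cone point).

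The key step is to compute, starting from the bottom-left corner of square $i$, the identity of the square whose bottom-left corner is the \emph{next} such corner encountered after rotating by $2\pi$ around the shared vertex. Walking counterclockwise through the four quadrants meeting at $v_i$:
\begin{itemize}
\item the bottom-left corner of $i$ is shared with the bottom-right corner of the square to its left, namely $\sigma^{-1}(i)$;
\item moving to the next quadrant, the square below $\sigma^{-1}(i)$ is $\tau^{-1}\sigma^{-1}(i)$, whose top-right corner meets $v_i$;
\item then the square to the right of $\tau^{-1}\sigma^{-1}(i)$ is $\sigma\tau^{-1}\sigma^{-1}(i)$, whose top-left corner meets $v_i$;
\item finally the square above $\sigma\tau^{-1}\sigma^{-1}(i)$ is $\tau\sigma\tau^{-1}\sigma^{-1}(i) = [\tau,\sigma](i)$, and we are back at a bottom-left corner.
\end{itemize}
Thus one full $2\pi$-turn around $v_i$ advances the bottom-left representative from $i$ to $[\tau,\sigma](i)$.

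It follows that indices $i,j$ correspond to the same vertex of $S(\sigma,\tau)$ exactly when they lie in a common cycle of $[\tau,\sigma]$, and that the cone angle at this vertex equals $2\pi$ times the length of that cycle. Since $[\sigma,\tau]$ is the inverse of $[\tau,\sigma]$, the two permutations share the same cycle type; so the number of cycles of $[\sigma,\tau]$ of length $\ell$ is precisely the number of vertices of $S(\sigma,\tau)$ of cone angle $2\pi\ell$, proving the proposition.

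The only real obstacle is making the bookkeeping around the vertex air-tight: one must fix a single convention for left/right/above/below consistent with how $\sigma$ and $\tau$ are defined in Section~\ref{sec:STS}, then verify that rotating counterclockwise indeed produces the composition $\tau\sigma\tau^{-1}\sigma^{-1}$ (as opposed to e.g.\ $\sigma\tau\sigma^{-1}\tau^{-1}$) — a potential sign/orientation error that is harmless for the statement because inverse permutations have identical cycle types, but worth checking carefully to keep the proof honest.
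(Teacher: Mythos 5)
Your argument is correct: the counterclockwise corner-walk around the bottom-left vertex of square $i$, with the paper's conventions (right neighbor $\sigma(i)$, top neighbor $\tau(i)$), does advance the bottom-left representative by $\tau\sigma\tau^{-1}\sigma^{-1}(i)$, and since this is the inverse of $[\sigma,\tau]$ the cycle-type conclusion follows exactly as you say; your closing remark about the orientation convention is the right way to dispose of that issue. The paper itself gives no proof (it cites \cite{ShresRandom} for this proposition), and your corner-chasing argument is the standard one used there, so there is nothing further to compare.
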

\begin{remark}\label{rem:commutatorFixedPoint}
    A key observation in the proof of Proposition~\ref{prop:commutatorstratum} is that square $j$ is a fixed point of $[\sigma, \tau]$ if and only if the bottom left corner of $j$ is not a cone point. We will utilize this fact later. 
\end{remark}

Recall that every square-tiled surface $S$ decomposes into maximal horizontal cylinders and that these can be made up of either single bands of squares or multiple bands of squares as illustrated in Figure~\ref{fig:introSTS}. We distinguish these two cases of STSs.

\begin{definition}
A square-tiled surface $S$ is called \textbf{single-banded} if all of its maximal horizontal cylinders are made up of single bands of squares. Alternately, an STS is called single-banded if every maximal horizontal cylinder is isometric to a Euclidean cylinder of the form $(\RR/k\ZZ) \times (0, 1)$ for some $k \in \NN$. Otherwise we call $S$ $\textbf{multi-banded}$.
\end{definition}

In Section~\ref{sec:singleBanded} we give a combinatorial description of multi-banded squares and show that the asymptotic proportion of single-banded squares is 1.

For any square-tiled surface $S$, the set of holonomy vectors $\Hol(S) \subseteq \ZZ^2$ since all the vertices of the squares lie on the integer lattice and hence so do the cone points as well. Since STSs are branched covers of the square torus $\TT$ it is natural to ask when do the holonomy vectors of $\TT$ lift to STSs? Motivated by this, we have the following two types of STSs (first defined in \cite{ShresWang}).

\begin{definition}
    A square-tiled surface $S$ is called a \textbf{holonomy torus} if $\Hol(S) = \Hol(\TT)$ and a \textbf{visibility STS} if $\Hol(\TT) \subseteq \Hol(S)$.
\end{definition}

The next proposition, restated here from \cite{ShresRandom} gives combinatorial criterion on the permutations $(\sigma, \tau)$ that guarantee the associated STS $S(\sigma, \tau)$ is a holonomy torus or a visibility STS.

\begin{lemma}\label{lem:combcriterionforgeom}
    Let $\sigma$ and $\tau$ be any two permutations in $S_n$. The associated STS $S(\sigma, \tau)$ is a
    
    \begin{enumerate}
        \item \label{lem:holcriterion} holonomy torus if and only if either $[\sigma, \tau]$ is a derangement or each of the fixed points of $[\sigma, \tau]$ are also fixed by both $\sigma$ and $\tau$.
        \item \label{lem:viscriterion} visibility torus if $n > 2f$, where $f$ is the number of fixed points of $[\sigma, \tau]$.
    \end{enumerate}
\end{lemma}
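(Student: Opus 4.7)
My plan is to prove the two parts separately, both relying on Proposition \ref{prop:commutatorstratum}: a fixed point of $[\sigma,\tau]$ corresponds to a non-cone (flat $2\pi$) vertex, while a cycle of length $\ell\geq 2$ corresponds to a cone point of angle $2\pi\ell$. Equivalently, the squares whose bottom-left (BL) corner sits at a cone vertex are exactly those in $[\sigma,\tau]$-cycles of length $\geq 2$; the total number of such squares is $n-f$. For the forward direction of (\ref{lem:holcriterion}), I would begin by observing that "$i$ is a fixed point of $[\sigma,\tau]$ with $\sigma(i)=\tau(i)=i$" is equivalent to $\langle\sigma,\tau\rangle\cdot i = \{i\}$, so square $i$ forms its own connected component (an isolated unit-square torus). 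Under the hypothesis, every connected component of $S$ is therefore either an isolated square torus, where $\Hol=\Hol(\TT)$ by direct inspection, or a component in which every vertex is a cone point. For the latter, any primitive $(p,q)$ produces a cone-to-cone saddle connection (no intermediate integer points), while any non-primitive $(dp',dq')$ with $\gcd(p',q')=1$ passes through $d-1$ intermediate cone points and fails to be a saddle connection. Hence $\Hol(S)=\Hol(\TT)$.

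For the reverse direction I argue the contrapositive: given a fixed point $i$ of $[\sigma,\tau]$ with (say) $\sigma(i)\neq i$, I exhibit a non-primitive vector in $\Hol(S)$. Let $k\geq 2$ be the $\sigma$-cycle length through $i$. If the component $C$ containing $i$ is a cone-free torus, then (marking $v_i$ as is conventional for cone-free surfaces) the horizontal trajectory starting at $v_i$ returns to $v_i$ after $k$ unit steps through the $k-1$ distinct unmarked vertices $v_{\sigma^j(i)}$ for $1\leq j\leq k-1$, giving the non-primitive holonomy $(k,0)$. Otherwise $C$ has cone points, and I consider the maximal horizontal cylinder $\mathrm{Cyl}$ containing $v_i$, of height $H\geq 1$ and length $k$; by maximality both boundary components of $\mathrm{Cyl}$ carry at least one cone point. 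If any boundary saddle connection has length $d\geq 2$, we immediately get a non-primitive $(d,0)\in\Hol(S)$. Otherwise every boundary saddle has length $1$, so each of the top and bottom boundaries consists of $k$ consecutive cone points; since $v_i$ is non-cone it lies on neither boundary, so it must lie on an interior horizontal line of the cylinder, forcing $H\geq 2$. The straight vertical trajectory from a bottom cone point to the cone point directly above it on the top boundary is then a saddle connection whose interior passes only through non-cone vertices at internal integer heights, yielding the non-primitive holonomy $(0,H)$.

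For (\ref{lem:viscriterion}) I use a direct pigeonhole argument on the "translate-by-$(p,q)$" permutation. For each primitive $(p,q)\in\Hol(\TT)$, since $\gcd(p,q)=1$ the straight-line trajectory from the BL corner of square $i$ in direction $(p,q)$ contains no intermediate integer point and terminates at the BL corner of a uniquely determined square $\phi_{p,q}(i)$; the resulting map $\phi_{p,q}$ is a permutation of $\{1,\ldots,n\}$. Such a trajectory is a saddle connection of holonomy $(p,q)$ precisely when both $v_i$ and $v_{\phi_{p,q}(i)}$ are cone points, i.e., when both $i$ and $\phi_{p,q}(i)$ lie in $[\sigma,\tau]$-cycles of length $\geq 2$. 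If $\phi_{p,q}$ sent every cone-square to a non-cone-square, injectivity would force $n-f\leq f$, contradicting $n>2f$; hence some cone-square maps to another cone-square, producing a saddle connection of holonomy $(p,q)$. Since this argument is uniform over all primitive $(p,q)$, we conclude $\Hol(\TT)\subseteq\Hol(S)$.

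The main obstacle I foresee is the last sub-case of the reverse direction of (\ref{lem:holcriterion}), where both boundaries of $\mathrm{Cyl}$ are maximally populated with cone points: correctly concluding that $v_i$ must lie on an interior horizontal line (and so that $H\geq 2$), and that the vertical saddle traverses only non-cone intermediate vertices, requires carefully leveraging the definition of \emph{maximal} cylinder, specifically that interior horizontal lines of such a cylinder carry no cone points while each boundary of the cylinder of a non-torus component must contain at least one.
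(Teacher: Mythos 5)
Your proof is correct, self-contained, and takes a genuinely different route from the paper's. The paper's own proof of this lemma is essentially a citation: part (\ref{lem:holcriterion}) defers to Proposition 1.6 of \cite{ShresRandom}, and part (\ref{lem:viscriterion}) invokes Lemma 3.1 of \cite{ShresRandom}, which says $4g + 2s - 4 > n$ implies visibility, and then translates this inequality into $n > 2f$ via the Euler characteristic formula. You instead argue everything from scratch: the forward direction of (\ref{lem:holcriterion}) via a clean component-by-component decomposition (isolated square tori versus components with every vertex a cone point), the reverse direction via a careful horizontal-cylinder analysis producing an explicit non-primitive vector $(k,0)$ or $(0,H)$, and part (\ref{lem:viscriterion}) via a pigeonhole argument on a ``translate-by-$(p,q)$'' permutation of squares. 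The latter is a genuinely different and more direct argument than the paper's genus-inequality route; what it buys is geometric transparency (you actually exhibit the saddle connection), at the cost of needing to be careful with a few flat-geometry details. One detail worth tightening: when you define $\phi_{p,q}(i)$ as ``the BL corner of a uniquely determined square,'' that vertex may be the BL corner of several squares when it is a cone point of angle $2\pi\ell$ with $\ell \geq 2$, so ``uniquely determined'' is slightly too fast. The fix is the standard one: flow from a point in the interior of square $i$ near its BL corner (generically perturbed so the trajectory avoids all vertices) in direction $(p,q)$ for time $\sqrt{p^2+q^2}$ and record the square containing the arrival point; this makes $\phi_{p,q}$ an honest bijection with inverse $\phi_{-p,-q}$, and the pigeonhole goes through unchanged. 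Note also that the pigeonhole genuinely requires a permutation of \emph{squares} rather than of vertices, since there are $n$ squares but only $V \leq n$ vertices, and $n > 2f$ need not imply $V > 2f$; your formulation correctly works at the level of squares.
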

\begin{proof}
    For \ref{lem:holcriterion}, see Proposition 1.6 of \cite{ShresRandom}. For \ref{lem:viscriterion}, note that Lemma 3.1 of \cite{ShresRandom} shows that $4g + 2s - 4 > n$ implies $S(\sigma, \tau)$ is a visibility torus with $g = 1- \chi(S(\sigma, \tau))/2$ where $\chi$ is the Euler characteristic and $s$ is the number of cone points of $S(\sigma, \tau)$. But $s$ is also the number of non-fixed point cycles of $[\sigma, \tau]$ and 
    $$\chi(S(\sigma, \tau)) = \text{ number of cycles of }[\sigma, \tau] - 2n + n$$
    using the Euler characteristic formula. 
    Hence, $4g+2s-4 > n \iff n > 2f$. 
\end{proof}

\subsection{Random models of square-tiled surfaces}\label{sec:randommodels}

We use the permutation description of square-tiled surfaces to give two combinatorial models for STSs. 

\textbf{Model 1 (the Standard Model)}: The first of the random models, which we call the \textbf{standard random model}, was introduced and analyzed in \cite{ShresRandom, Lech}. Since every pair of permutations $(\sigma, \tau) \in S_n \times S_n$ gives a labelled STS, the standard model consists of $S_n \times S_n$ with the uniform distribution. 

\textbf{Model 2 (the Horizontally Restricted Model)}: In the second model, which we call the \textbf{horizontally restricted (HR) model} we restrict the permutation describing how the squares are glued horizontally to a fixed conjugacy class of the symmetric group. Let $\mu_n \vdash n$ be a partition of $n$ and let $\calK_{\mu_n}$ be the conjugacy class of $S_n$ given by the partition $\mu_n$. Then the HR model consists of the set $\calK_{\mu_n} \times S_n$ with the uniform distribution. 

We introduce the notation $x \sim \Unif(X)$ to mean $x$ is chosen uniformly randomly from the set $X$. 

\begin{remark} In this note we will be concerned with the asymptotic (as $n \rightarrow \infty$) behavior of the HR model. However, note that as $n$ varies, the partition $\mu_n \vdash n$ also varies. To get a consistent asymptotic behavior, we will specify what partitions $\mu_n$ are allowed in the model as $n$ increases by restricting the number of parts of $\mu_n$. Equivalently, this restricts the number of cycles of the horizontal permutation. For instance, specifying $\mu_n \vdash n$ to have exactly 1 part will give conjugacy classes consisting of $n$-cycles. In this case, as $n$-increases, the HR model only samples from STSs with a single horizontal cylinder of height 1 and length $n$.   
\end{remark}

Given an STS $S(\sigma, \tau)$, Proposition \ref{prop:commutatorstratum} highlights the relevance of the commutator $[\sigma, \tau]$. Hence, we consider the commutator map 
$$w_c: X \times S_n \rightarrow S_n$$ given by 
$$ w_c(\sigma, \tau) = \sigma \tau \sigma^{-1} \tau^{-1}$$ where $X$ is either $S_n$ (in the case of the standard model) or $\calK_{\mu_n}$ for partition $\mu_n \vdash n$ (in the case of the HR model). Note that, since commutators are always even permutations, the image of the map $w_c$ is a subset of the alternating group $A_n \subset S_n$. Hence, the uniform distribution in $X \times S_n$ induces a probability distribution $P^X_n$ on $A_n$ as follows. For $(\sigma, \tau) \sim \Unif(X \times S_n)$, 
$$P^X_n(\pi) = \Pr(\sigma \tau \sigma^{-1} \tau^{-1} = \pi) =  \frac{|w_c^{-1}(\pi)|}{|X|\cdot n!}$$
where $w_c^{-1}(\pi) = \{ (\sigma,\tau) \in X \times S_n| [\sigma, \tau] = \pi\}$. Note that $P^X_n$ can be extended to the whole of $S_n$ by setting it to be zero on $S_n \setminus A_n$. We note that $P^X_n$ is a class function on $S_n$:
\begin{proposition}
Let $\pi, g \in S_n$. Then, $P^X_n (\pi) = P^X_n(g \pi g^{-1})$. 
\end{proposition}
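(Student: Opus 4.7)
The plan is to exhibit an explicit bijection between the fibers $w_c^{-1}(\pi)$ and $w_c^{-1}(g\pi g^{-1})$, from which the equality of their cardinalities (and hence of $P^X_n(\pi)$ and $P^X_n(g\pi g^{-1})$) is immediate.

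First I would observe the key algebraic identity: for any $g,\sigma,\tau \in S_n$,
\[
[g\sigma g^{-1},\, g\tau g^{-1}] \;=\; (g\sigma g^{-1})(g\tau g^{-1})(g\sigma^{-1} g^{-1})(g\tau^{-1} g^{-1}) \;=\; g\,[\sigma,\tau]\,g^{-1}.
\]
Thus simultaneous conjugation by $g$ is equivariant for $w_c$ with respect to the conjugation action on $S_n$.

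Next I would define the map $\Phi_g \colon X \times S_n \to X \times S_n$ by $\Phi_g(\sigma,\tau) = (g\sigma g^{-1},\, g\tau g^{-1})$. For this to be well-defined we need $gXg^{-1} = X$: when $X = S_n$ this is trivial, and when $X = \calK_{\mu_n}$ it holds because a conjugacy class of $S_n$ is, by definition, invariant under conjugation by any element of $S_n$. The map $\Phi_g$ is clearly a bijection with inverse $\Phi_{g^{-1}}$, and by the identity above it restricts to a bijection $w_c^{-1}(\pi) \xrightarrow{\sim} w_c^{-1}(g\pi g^{-1})$.

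Therefore $|w_c^{-1}(\pi)| = |w_c^{-1}(g\pi g^{-1})|$, and dividing by $|X|\cdot n!$ yields
\[
P^X_n(\pi) \;=\; \frac{|w_c^{-1}(\pi)|}{|X|\cdot n!} \;=\; \frac{|w_c^{-1}(g\pi g^{-1})|}{|X|\cdot n!} \;=\; P^X_n(g\pi g^{-1}),
\]
as required. There is no real obstacle here; the only subtlety is verifying that $X$ is closed under $S_n$-conjugation, which is why the proof works uniformly for both the Standard Model ($X=S_n$) and the HR Model ($X=\calK_{\mu_n}$).
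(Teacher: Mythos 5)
Your proof is correct and follows essentially the same approach as the paper: both exhibit the bijection $(\sigma,\tau)\mapsto(g\sigma g^{-1},g\tau g^{-1})$ between the fibers of the commutator map over $\pi$ and over $g\pi g^{-1}$. You make explicit a small point the paper leaves implicit, namely that $X$ must be closed under $S_n$-conjugation for the map to land back in $X\times S_n$, which is a welcome clarification.
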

\begin{proof}
Consider the sets,
\begin{align*}A &= \{(\sigma, \tau) \in X \times S_n |\,[\sigma, \tau] = \pi\}\\
B &=\{(\sigma, \tau) \in X \times S_n |\, [\sigma, \tau] = g\pi g^{-1}\} \end{align*}
Note that for any $(\sigma, \tau) \in A$, we have $(g \sigma g^{-1}, g \tau g^{-1}) \in B$ since $[g \sigma g^{-1}, g \tau g^{-1}] = g\pi g^{-1}$. Hence, there exists a map $f:A \rightarrow B$ given by $f(\sigma, \tau) = (g \sigma g^{-1}, g \tau g^{-1})$. Moreover, $f$ is a bijection with inverse $f^{-1}: B \rightarrow A$ given by $f^{-1}(\alpha, \beta) = (g^{-1} \alpha g, g^{-1} \beta g)$. Noting that $P^X_n(\pi) = \frac{|A|}{|X||S_n|}$ and $P^X_n(g \pi g^{-1}) = \frac{|B|}{|X||S_n|}$ we get the desired conclusion. 
\end{proof}

We will denote $P^X_n$ by $P_{\mu_n}$ when $X = \calK_{\mu_n}$.

\subsection{Permutation Statistics}
Our random models for square-tiled surfaces are based on permutations. Moreover, in Theorem \ref{thm:probabilityConvergenceReal}, we show that the probability distribution that the HR model induces on $A_n$ converges to the uniform distribution on $A_n$. Hence, we will naturally need a plethora of statistics on random permutations which we organize/derive in this section. 

We start with the expectation and variance of the number of cycles of a uniformly random permutation from $A_n$ or $S_n$. 
\begin{proposition}[\cite{FlemPip}]\label{prop:expectednumcycles}
    Let $C_n$ and $K_n$ be the number of cycles of a uniformly random permutation from $A_n$ and $S_n$ respectively. Then for $X_n=C_n$ or $X_n=K_n$ we have,
    $$ \EE[X_n] = \log(n) + \gamma + o(1) \text{ and } \Var[X_n] = \log(n) + \gamma - \frac{\pi^2}{6} + o(1)$$
    where $\gamma \approx 0.577$ is the Euler-Mascheroni constant. 
 
\end{proposition}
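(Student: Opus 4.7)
The plan is to establish the identities first for $K_n$ and then bootstrap to $C_n$ via a parity-conditioning argument. The central input for $K_n$ is the classical Feller representation: if $\pi \sim \Unif(S_n)$, then $K_n$ admits the distributional identity $K_n \stackrel{d}{=} \sum_{k=1}^{n}\xi_k$, where the $\xi_k$ are independent Bernoulli random variables with $\Pr(\xi_k=1)=1/k$. Equivalently, one can read this off from the Stirling generating function $\EE[z^{K_n}] = z(z+1)\cdots(z+n-1)/n!$. Linearity of expectation and independence then give $\EE[K_n]=H_n=\sum_{k=1}^n 1/k$ and $\Var[K_n]=H_n-H_n^{(2)}$, where $H_n^{(2)}=\sum_{k=1}^n 1/k^2$. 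Invoking $H_n=\log n+\gamma+O(1/n)$ and $H_n^{(2)}=\pi^2/6-O(1/n)$ yields the claim for $K_n$.

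For $C_n$, the key observation is that $\operatorname{sgn}(\pi)=(-1)^{n-K_n}$, so a uniformly random element of $S_n$ lies in $A_n$ exactly when $K_n\equiv n\pmod 2$. Since $|A_n|=n!/2$ for $n\geq 2$, we have $\Pr(K_n\equiv n\pmod 2)=1/2$, and $C_n$ has the distribution of $K_n$ conditioned on this parity event. To extract moments under this conditioning, I would use the standard parity-splitting trick: for any function $g$,
\[
\EE[g(C_n)] \;=\; \EE[g(K_n)] \;+\; (-1)^n\,\EE\bigl[g(K_n)(-1)^{K_n}\bigr].
\]
Applying this to $g(x)=x$ and $g(x)=x^2$ reduces the correction terms to evaluations of $f(z)=z(z+1)\cdots(z+n-1)/n!$ and its first two derivatives at $z=-1$.

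A direct computation using the fact that $f$ vanishes at $z=-1$ for $n\geq 2$ shows that $f'(-1)=-\frac{1}{n(n-1)}$, and analogous manipulations give that the relevant second-derivative contribution is also $O(1/n)$. Consequently the conditional and unconditional first and second moments differ by terms that are absorbed into the $o(1)$ error, proving the formulas for $C_n$. The main bookkeeping obstacle is carrying the parity indicator through the second moment computation for the variance, since that cross term involves differentiating a product of $n$ factors and verifying the correct cancellation near $z=-1$; once organized via the generating function $f(z)$, however, every term becomes a routine residue-type calculation and the $o(1)$ corrections follow.
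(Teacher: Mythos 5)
Your proposal is correct, but it is worth noting that the paper does not actually prove this proposition: it is quoted verbatim from Fleming--Pippenger \cite{FlemPip}, so there is no internal argument to compare against. Your route is a sound, self-contained derivation. For $K_n$, the Feller/Bernoulli decomposition $K_n \stackrel{d}{=} \sum_{k=1}^n \xi_k$ with $\Pr(\xi_k=1)=1/k$ independent immediately gives $\EE[K_n]=H_n$ and $\Var[K_n]=H_n-H_n^{(2)}$, and the stated asymptotics follow from $H_n=\log n+\gamma+O(1/n)$ and $H_n^{(2)}\to\pi^2/6$. For $C_n$, your parity-splitting identity $\EE[g(C_n)]=\EE[g(K_n)]+(-1)^n\EE[g(K_n)(-1)^{K_n}]$ is exactly right (since $\pi\in A_n$ iff $K_n\equiv n \pmod 2$, an event of probability $1/2$ for $n\geq 2$), and the correction terms are controlled by $f(z)=\frac{1}{n!}\prod_{k=0}^{n-1}(z+k)$ at $z=-1$: indeed $f(-1)=0$, $f'(-1)=-\frac{(n-2)!}{n!}=-\frac{1}{n(n-1)}$ as you assert, and writing $f(z)=(z+1)g(z)/n!$ gives $f''(-1)=2g'(-1)/n! = \frac{2(1-H_{n-2})}{n(n-1)}=O\left(\frac{\log n}{n^2}\right)$, so both the first- and second-moment corrections (and the cross term $2\,\EE[K_n]\cdot O(n^{-2})$ in the variance) are $o(1)$. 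What your approach buys over the paper's bare citation is an elementary, fully explicit proof with quantitative error terms of order $O(\log n/n^2)$ for the discrepancy between the $A_n$ and $S_n$ cycle-count moments, which is stronger than the $o(1)$ needed here; the cost is the mild bookkeeping at $z=-1$ that you correctly flag and resolve.
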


Next, we have a large deviation result on the number of cycles of $\pi \sim \Unif(A_n)$.

\begin{proposition}[\cite{FlemPip}]\label{lem:largedevforuniform}Let $\pi \sim \Unif(A_n)$ and $C_n$ be the number of cycles of $\pi$. Then, 
$$\Pr(C_n \geq t) = O\left(\frac{n}{2^t}\right)$$
\end{proposition}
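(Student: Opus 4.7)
The plan is to reduce the statement about $A_n$ to the analogous one on $S_n$ and then apply Markov's inequality to a cleverly chosen exponential moment. Since $|S_n|/|A_n| = 2$, for any event $E \subseteq S_n$ we have
$$\Pr_{\pi \sim \Unif(A_n)}(\pi \in E) \;=\; \frac{|A_n \cap E|}{|A_n|} \;\le\; \frac{|E|}{|A_n|} \;=\; 2\Pr_{\pi \sim \Unif(S_n)}(\pi \in E),$$
so it suffices to prove the same bound, up to a factor of $2$, for the cycle count $K_n$ of a uniformly random permutation from $S_n$.

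For the $S_n$ version I would invoke the classical factorization
$$\sum_{\pi \in S_n} z^{K_n(\pi)} \;=\; z(z+1)(z+2)\cdots(z+n-1),$$
which is the generating function for the unsigned Stirling numbers of the first kind (equivalently, Feller's decomposition $K_n \stackrel{d}{=} \sum_{i=1}^n B_i$ where the $B_i$ are independent Bernoullis with $\Pr(B_i = 1) = 1/i$). The decisive observation is that substituting $z = 2$ causes the product to telescope:
$$\EE\left[2^{K_n}\right] \;=\; \frac{2 \cdot 3 \cdots (n+1)}{n!} \;=\; n+1.$$

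Applying Markov's inequality to the nonnegative random variable $2^{K_n}$ then gives
$$\Pr(K_n \ge t) \;=\; \Pr\bigl(2^{K_n} \ge 2^t\bigr) \;\le\; \frac{\EE\bigl[2^{K_n}\bigr]}{2^t} \;=\; \frac{n+1}{2^t},$$
and combining with the two-to-one reduction above yields $\Pr_{\Unif(A_n)}(C_n \ge t) \le 2(n+1)/2^t = O(n/2^t)$, as desired. I do not anticipate real obstacles: the only genuine insight is noticing that the unoptimized Chernoff parameter $\lambda = \log 2$ collapses the moment generating function to a clean linear bound in $n$. Choosing a different (even optimal) $\lambda$ would give the same asymptotic rate but with a messier multiplicative constant, which one could then absorb into the $O(\cdot)$.
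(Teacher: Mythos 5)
Your proof is correct, and it is essentially the standard argument: the paper only cites this fact from Fleming--Pippenger, and the same technique (Markov's inequality applied to $q^{C}$ with the generating-function evaluation at $q=2$, which kills all but a manageable set of terms) is exactly what the paper itself uses for its analogue, Lemma \ref{lem:largedeviations}. Your factor-of-two reduction from $A_n$ to $S_n$ and the telescoping $\EE[2^{K_n}]=n+1$ are both valid, so nothing further is needed.
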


Next, motivated by Lemma \ref{lem:combcriterionforgeom} we also consider the proportion of fixed point free permutations (also known as \textbf{derangements}) in $A_n$.

\begin{proposition}[\cite{PoonSlav}]\label{prop:AnDerangement}
    Let $\pi \sim \Unif(A_n)$. Then, $|\Pr(\pi \text{ is a derangement}) - \frac{1}{e}| \leq \frac{n^2}{(n+1)!}$.
\end{proposition}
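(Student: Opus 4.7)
The plan is to count even derangements explicitly by separating the derangement-count into its even and odd parts. Let $D_n$ be the total number of derangements in $S_n$, and split $D_n = D_n^+ + D_n^-$ into even and odd derangements. Since $|A_n| = n!/2$, I have
\[
\Pr(\pi \text{ is a derangement}) = \frac{2 D_n^+}{n!} = \frac{D_n}{n!} + \frac{D_n^+ - D_n^-}{n!},
\]
so it suffices to estimate each of the two summands.

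For the first summand I would use the classical inclusion-exclusion formula $D_n = n! \sum_{k=0}^{n} (-1)^k/k!$. Comparing this finite sum with the Taylor series $1/e = \sum_{k=0}^{\infty} (-1)^k/k!$ and applying the alternating series remainder estimate gives the well-known bound $\bigl|D_n/n! - 1/e\bigr| \leq 1/(n+1)!$.

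For the signed count $D_n^+ - D_n^-$ the key observation is that it equals $\det(J_n - I_n)$, where $J_n$ is the all-ones matrix and $I_n$ is the identity. Indeed, expanding the determinant by the Leibniz formula picks up precisely $\sum_{\pi \text{ derangement}} \mathrm{sgn}(\pi)$ because only permutations with no fixed point contribute. Since $J_n$ has eigenvalues $n$ (once) and $0$ ($n-1$ times), the eigenvalues of $J_n - I_n$ are $n-1$ and $-1$ (with multiplicity $n-1$), so
\[
D_n^+ - D_n^- = (-1)^{n-1}(n-1).
\]

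Combining the two estimates yields
\[
\Bigl|\Pr(\pi \text{ is a derangement}) - \tfrac{1}{e}\Bigr| \leq \frac{1}{(n+1)!} + \frac{n-1}{n!} = \frac{1 + (n^2 - 1)}{(n+1)!} = \frac{n^2}{(n+1)!},
\]
which is the desired bound. The only potentially non-routine step is the eigenvalue/determinant calculation of $J_n - I_n$, but this is standard; the rest is bookkeeping with the alternating-series inclusion-exclusion formula for $D_n$.
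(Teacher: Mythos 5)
Your proof is correct. Note that the paper does not prove this proposition at all: it is quoted from the cited reference, so there is no in-paper argument to compare against. Your argument is a clean, self-contained derivation: the decomposition $\Pr(\pi \text{ derangement}) = \frac{D_n}{n!} + \frac{D_n^+ - D_n^-}{n!}$ is exact, the alternating-series bound $\left|\frac{D_n}{n!} - \frac{1}{e}\right| \leq \frac{1}{(n+1)!}$ is standard, and the identity $D_n^+ - D_n^- = \det(J_n - I_n) = (-1)^{n-1}(n-1)$ follows correctly from the Leibniz expansion (only fixed-point-free permutations contribute a nonzero product) together with the spectrum of $J_n$. The final bookkeeping $\frac{1}{(n+1)!} + \frac{(n-1)(n+1)}{(n+1)!} = \frac{n^2}{(n+1)!}$ checks out, so you recover exactly the stated bound; the only thing you might add for completeness is a remark that the signed-derangement identity is the classical result usually attributed to the permanent/determinant trick, but the proof as written is complete.
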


Let $\pi \sim \Unif(A_n)$, let $g \in A_n,$ and let $\mathcal{K}_g$ be the conjugacy class of $g$ in $S_n.$ Then the probability that $\pi$ is in $\mathcal{K}_g$ is \begin{equation*}\text{Pr}(\pi \in \mathcal{K}_g) = \frac{|\calK_g|}{|A_n|} =  \frac{2 |\mathcal{K}_g|}{n!}.\end{equation*}

Recall that a permutation $\sigma \in S_n$ is said to have $\textbf{cycle type}$ $(a_1, \dots, a_n)$ if it has $a_1$ 1-cycles, $a_2$ 2-cycles, $\dots$, $a_n$ $n$-cycles. Moreover, any two permutations are conjugate in $S_n$ if and only if they have the same cycle type. Given a conjugacy class $\calK$ and $\sigma \in \calK$, we say  $\sigma$ is the \textbf{canonical representative of} $\mathcal{K}$ if $\sigma$ has the form $(1 \cdots m_1) (m_1 + 1 \cdots m_2) (m_2 + 1 \cdots m_3) \cdots (m_{k-1}+1 \cdots m_{k}),$ with $1 = m_0 \leq m_1 < m_2 < \cdots < m_k = n$ and $m_i - m_{i - 1} \leq m_{i - 1} - m_{i - 2}$ for all $i.$ In other words, the largest cycles appear first and the numbers $1 \dots n$ appear in order. We also say that a cycle is $\textbf{non-trivial}$ if it is not a 1-cycle. 

The next well known result gives concrete expressions for $|\mathcal{K}_g|$ and $\text{Pr}(\pi \in \mathcal{K}_g)$ in terms of the cycle type of $g$.

\begin{proposition}\label{prop:cycleNumbers}
    Let $g \in A_n$ have cycle type $t = (a_1, a_2, \ldots, a_n)$. Let $\mathcal{K}_g$ be the conjugacy class of $g$ in $S_n$. Then, \begin{equation}|\mathcal{K}_g| = n! \left(\prod_{k = 1}^n \frac{k^{-a_k}}{a_k !}\right).\label{eq:classSize}\end{equation} In particular, for $\pi \sim \Unif(A_n)$, we have \begin{equation*}\text{Pr}(\pi \in \mathcal{K}_g) =2 \left(\prod_{k = 1}^n \frac{k^{-a_k}}{a_k !}\right).\end{equation*}
\end{proposition}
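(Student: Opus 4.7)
The plan is to establish the formula for $|\calK_g|$ via the orbit-stabilizer theorem applied to the conjugation action of $S_n$ on itself. Since $\calK_g$ is the orbit of $g$ under this action, orbit-stabilizer gives $|\calK_g| = [S_n : C_{S_n}(g)] = n!/|C_{S_n}(g)|$, where $C_{S_n}(g)$ is the centralizer of $g$ in $S_n$. So it suffices to compute $|C_{S_n}(g)| = \prod_{k=1}^{n} k^{a_k} a_k!$.

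To compute the centralizer, I would use the fact that $\pi g \pi^{-1}$ is the permutation obtained from $g$ by applying $\pi$ to each entry of each cycle. So $\pi \in C_{S_n}(g)$ if and only if $\pi$ permutes the cycles of $g$ in a way that preserves their cyclic structure. An element of the centralizer is therefore uniquely specified by two pieces of data: for each cycle length $k$, a permutation of the $a_k$ cycles of length $k$ among themselves (giving $a_k!$ choices), and for each individual cycle of length $k$, a choice of cyclic rotation aligning its image with the target cycle (giving $k$ choices per cycle, hence $k^{a_k}$ total). Multiplying across all $k$ yields $|C_{S_n}(g)| = \prod_k k^{a_k} a_k!$, and substituting into the orbit-stabilizer expression proves \eqref{eq:classSize}.

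For the probability statement, the key observation is that $A_n$ is a normal subgroup of $S_n$, so conjugation by any element of $S_n$ preserves $A_n$. Since $g \in A_n$, the entire $S_n$-conjugacy class $\calK_g$ lies inside $A_n$. Therefore, for $\pi \sim \Unif(A_n)$,
\begin{equation*}
\Pr(\pi \in \calK_g) = \frac{|\calK_g|}{|A_n|} = \frac{2|\calK_g|}{n!},
\end{equation*}
and plugging in the formula for $|\calK_g|$ gives the claimed expression.

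There is no serious obstacle here — this is a classical result — but the step requiring the most care is the centralizer computation: verifying both that any such choice of cycle-permutation plus rotations does give an element commuting with $g$, and conversely that every commuting element arises this way. Once the centralizer is pinned down, the rest is bookkeeping, and the probability formula drops out immediately from $|A_n| = n!/2$.
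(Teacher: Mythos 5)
Your proof is correct, but it takes a different route from the paper's. The paper establishes \eqref{eq:classSize} by a direct enumeration: it sorts the $n$ symbols into bins according to the cycle type via a multinomial coefficient, counts $(k-1)!$ cyclic orderings for each $k$-cycle, and divides by $a_k!$ to account for reorderings of cycles of equal length. You instead invoke the orbit--stabilizer theorem for the conjugation action, reducing the class-size formula to the computation of the centralizer $|C_{S_n}(g)| = \prod_{k} k^{a_k} a_k!$, which you justify by the standard description of centralizing elements as those permuting the cycles of $g$ of each length among themselves together with a cyclic rotation of each cycle. Both arguments are complete and standard; the paper's is more self-contained (it never needs the structure of the centralizer, only elementary counting), while yours is arguably cleaner conceptually and makes the factor $\prod_k k^{a_k} a_k!$ appear with a group-theoretic meaning that is reused elsewhere in the paper (for example, the centralizer $|C_{S_n}(\sigma^{-1})| = |S_n|/|\calK_{\mu_n}|$ appears in the proof of Lemma \ref{lem:sigmaProbExpression}). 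Your handling of the probability statement --- observing that normality of $A_n$ forces $\calK_g \subseteq A_n$ when $g \in A_n$, then dividing by $|A_n| = n!/2$ --- matches the paper's remark preceding the proposition, so that part is essentially the same.
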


\begin{proof}
    Let $\vec{d}$ be the tuple corresponding to the cycle type $t$ where $\vec{d}$ contains $a_1$ repeating 1's, $a_2$ repeating 2's, $a_3$ repeating 3's, and so on. Then recall that the multinomial coefficient $\binom{n}{\vec{d}}$ represents the number of ways of sorting the digits $1$ through $n$ into $a_1$ bins of size 1, $a_2$ bins of size 2, $a_3$ bins of size 3, and so on. Given a bin with $k$ numbers, one can form $(k - 1)!$ distinct $k$-cycles. But given $a_k$ disjoint $k$-cycles, we can order them in $a_k !$ ways. Hence, noticing that a permutation in $\calK_g$ has $a_k$ disjoint $k$-cycles, we see
    
    \begin{equation*} |\mathcal{K}_g| = \binom{n}{\vec{d}} \prod_{k = 1}^n \frac{((k - 1)!)^{a_k}}{a_k !} = \frac{n!}{\prod_{k=1}^n (k!)^{a_k}}\prod_{k = 1}^n \frac{((k - 1)!)^{a_k}}{a_k !}  =n! \left(\prod_{k = 1}^n \frac{k^{-a_k}}{a_k !}\right).\end{equation*} Thus, \begin{equation*}\text{Pr}(\pi \in \mathcal{K}_g) = \frac{|\calK_g|}{|A_n|}= 2 \left(\prod_{k = 1}^n \frac{k^{-a_k}}{a_k !}\right).\end{equation*}\end{proof}

\subsection{Tools from representation theory}

Our proofs rely heavily on tools from the representation theory of finite groups, and in particular, symmetric groups. In this section we give a brief introduction to the representation theory of finite groups by citing some foundational definitions and results. We also state more recent key theorems that we will subsequently use in the latter half of the paper. See \cite{Isaacs} for a full reference.

Suppose $G$ is a finite group, let $F$ be a field and $V$ a vector space over $F$ of dimension $n$. An \textbf{$F$-representation} is a group homomorphism $\rho : G \rightarrow \text{GL}(V)$. Choosing a basis for $V$, we can identify $\text{GL}(V)$ with $\text{GL}(n; F)$, the group of $n \times n$ invertible matrices over $F$. So, a representation can be thought of as a homomorphism $\rho: G \rightarrow \text{GL}(n;F)$. The \textbf{character} afforded to representation $\rho$ is the map $\chi : G \rightarrow F$ given by $\chi(g) = \text{Tr}(\rho(g))$ The \textbf{dimension} or \textbf{degree} of the representation/character is the dimension of $V$ (in this case, $n$). Two representations $\rho_1, \rho_2$ of the same dimension are \textbf{similar} or \textbf{equivalent}  if there exists an invertible matrix $P$ such that $\rho_1(g) = P \rho_2(g) P^{-1}$ for all $g \in G$. It is well known that two representations are similar if and only if they afford the same character.

Given a group $G$ and a field $F$, denote by $F[G]$ the group algebra of $G$ over $F$. Note that a representation $\rho: G \rightarrow \text{GL}(n; F)$ can be naturally extended to a representation $\rho: F[G] \rightarrow M_n(F)$ (where $M_n(F)$ is the set of $n\times n$ matrices over $F$) and vice versa. 

Given a representation $\rho: G \rightarrow \text{GL}(V)$, a subspace $W \subset V$ is called \textbf{$G$-invariant} if $\rho(g)w \in W$ for all $g \in G$ and $w \in W$. The representation $\rho$ is said to be \textbf{irreducible} if there are no non-trivial $G$-invariant subspaces $W\subset V$. A character $\chi$ is said to be irreducible if the corresponding representation it is afforded to is irreducible. 

For our purpose we will be focused on the case when $F = \CC$. It is a well known theorem by Maschke that one can always break up a representation into irreducible representations. This allows us to study the irreducible pieces and then recover information about the whole. 

There are a number of important facts about the characters corresponding to irreducible representations. 
\begin{theorem}\label{thm:characterprops}
    Suppose $G$ is a finite group and let $\Irr(G)$ be the set of irreducible characters of $G$. Then we have the following facts about characters of $G$:
    \begin{enumerate}
        \item The set $\Irr(G)$ is a finite group (under composition) and its cardinality is equal to the number of conjugacy classes of $G.$
        \item If $g, h \in G$ are conjugate, then $\chi(g) = \chi(h)$ for any irreducible character $\chi$. In other words, irreducible characters are $\textbf{class functions}$.
        \item (First orthogonality) If $\chi_i$ and $\chi_j$ are irreducible characters of $G.$ Then \begin{equation}\frac{1}{|G|} \sum_{g \in G} \chi_i(g) \overline{\chi_j(g)} = \delta_{ij},\end{equation} where $\delta_{ij} = 1$ if $i = j$, and $\delta_{ij} = 0$ otherwise.
        \item (Second orthogonality) Suppose $g, h \in G$. Then \begin{equation}\sum_{\chi \in \Irr(G)} \chi(g) \overline{\chi(h)} = \begin{cases}|G|/|\mathcal{K}_g|, & g\text{ is conjugate to } h \\ 0, & \text{else}\end{cases},\end{equation} where $|\mathcal{K}_g|$ is the size of the conjugacy class of $g.$
    \end{enumerate}
\end{theorem}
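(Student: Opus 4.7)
The plan is to prove the four parts in the order (2), (3), (1), (4), since together they form the standard content of basic character theory and each builds on the previous.

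Part (2) is immediate from the cyclic property of the trace: if $h = kgk^{-1}$ and $\rho$ affords $\chi$, then $\chi(h) = \Tr(\rho(k)\rho(g)\rho(k)^{-1}) = \Tr(\rho(g)) = \chi(g)$. In particular, every character is a class function on $G$.

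For part (3), I would prove the row orthogonality relations via Schur's Lemma. First I would invoke Maschke's theorem, which guarantees that every $\CC$-representation of a finite group decomposes as a direct sum of irreducibles, together with Schur's Lemma, which states that any $G$-equivariant linear map $T : V_j \to V_i$ between irreducible $\CC[G]$-modules is zero when $V_i \not\cong V_j$ and a scalar multiple of the identity when $V_i \cong V_j$. For an arbitrary linear map $S : V_j \to V_i$, the averaged map
$$S^\# := \frac{1}{|G|}\sum_{g \in G} \rho_i(g)\, S\, \rho_j(g)^{-1}$$
is $G$-equivariant. Taking $S$ to be a matrix unit $E_{ab}$ relative to fixed bases, Schur forces $S^\# = 0$ when $i \neq j$ and $S^\# = \frac{\Tr(S)}{\dim \rho_i}\,\Id$ when $i = j$. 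Reading off the matrix entries yields the matrix-coefficient orthogonality relations; summing the diagonal entries and using $\overline{\chi_j(g)} = \chi_j(g^{-1})$ (valid because $\rho_j(g)$ has finite order, hence roots-of-unity eigenvalues) delivers
$$\frac{1}{|G|}\sum_{g \in G} \chi_i(g)\,\overline{\chi_j(g)} = \delta_{ij}.$$

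Parts (1) and (4) I would deduce together. The key input is that the irreducible characters span the space $\CF(G)$ of complex-valued class functions on $G$. To see this I would pass to the group algebra and its center $Z(\CC[G])$. Decomposing the regular representation into isotypic components produces the central idempotents
$$e_i = \frac{\dim\rho_i}{|G|}\sum_{g \in G} \overline{\chi_i(g)}\, g,$$
and a direct computation using part (3) shows $e_i e_j = \delta_{ij} e_i$ and $\sum_i e_i = 1$, so the $\{e_i\}$ form a basis of $Z(\CC[G])$. On the other hand, $Z(\CC[G])$ manifestly has the conjugacy class sums as a basis, so its dimension equals the number of conjugacy classes. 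Comparing gives (1). Then (1) combined with (3) says that the character table is a square matrix which, after normalizing rows by $1/\sqrt{|G|}$ and columns by $\sqrt{|\calK_g|}$, is unitary; the column orthogonality of a unitary matrix, rearranged, is precisely the identity (4).

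The main obstacle is verifying that the $\chi_i$ span $\CF(G)$, equivalently that $|\Irr(G)|$ equals the number of conjugacy classes of $G$. All other parts are essentially bookkeeping around Schur's Lemma once (3) is in hand. The cleanest route is through the central idempotents $e_i$ sketched above, which simultaneously produces the correct count for (1) and packages (4) as column orthogonality of the unitary character table.
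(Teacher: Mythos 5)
You should know at the outset that the paper offers no proof of this theorem: it is stated as standard background in the representation theory of finite groups (the neighboring results are quoted from Isaacs), so there is no in-paper argument to compare yours against. Judged on its own terms, your outline is the standard textbook development, and parts (2), (3), and (4) are sound: trace cyclicity for (2); the Maschke--Schur averaging argument with matrix units $E_{ab}$, together with $\overline{\chi_j(g)}=\chi_j(g^{-1})$, for (3); and, once (1) and (3) are available, the observation that the matrix with entries $\sqrt{|\calK_g|/|G|}\,\chi_i(g)$ is square with orthonormal rows, hence unitary, hence has orthonormal columns, which is exactly the second orthogonality relation (4). (A side remark: the statement's phrase ``$\Irr(G)$ is a finite group'' should be read as ``finite set''; your argument correctly delivers the cardinality claim, which is all that is used later in the paper.)

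The one genuine soft spot is in your part (1). From $e_ie_j=\delta_{ij}e_i$ and $\sum_i e_i=1$ you conclude ``so the $\{e_i\}$ form a basis of $Z(\CC[G])$.'' Those two formal properties give linear independence but not spanning: in the commutative algebra $\CC^3$, the idempotents $(1,1,0)$ and $(0,0,1)$ are orthogonal and sum to $1$, yet do not span. What is missing is the step showing every central element lies in the span of the $e_i$. The quickest repair: by Schur's lemma (the paper records the needed scalar statement as Lemma 2.10), any $z\in Z(\CC[G])$ acts on each irreducible $V_i$ by a scalar $\omega_i(z)$; since $e_j$ acts on $V_i$ as $\delta_{ij}\,\Id$ (a computation from (3)), the element $z-\sum_i\omega_i(z)e_i$ annihilates every irreducible, hence annihilates the regular representation by Maschke, and faithfulness of the regular representation (evaluate at $1\in\CC[G]$) forces it to be zero. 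Equivalently, you could run the classical argument that a class function orthogonal to all irreducible characters vanishes, or simply invoke Artin--Wedderburn, $\CC[G]\cong\bigoplus_i M_{d_i}(\CC)$, whose center visibly has dimension $|\Irr(G)|$. With that step supplied, the comparison against the class-sum basis gives (1), and your route (1)$\Rightarrow$(4) goes through as written.
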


Additionally, we need the following lemma (Lemma 2.25 of \cite{Isaacs}) which severely restricts the form of commuting representations:

\begin{lemma}\label{lem:diagonalmatrix}
    Suppose $\rho$ is an $n$-dimensional irreducible $\mathbb{C}$-representation of a group $G$. Suppose also that $A \in M_n(\CC)$ commutes with $\rho(g)$ for all $g \in G.$ Then $A = \alpha I$ for some $\alpha \in \mathbb{C},$ where $I$ is the $n \times n$ identity matrix. 
\end{lemma}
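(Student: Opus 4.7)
The plan is to give the standard proof of Schur's lemma, leveraging the fact that $\CC$ is algebraically closed so that every matrix has an eigenvalue. The key insight is that eigenspaces of an operator that commutes with the $G$-action must themselves be $G$-invariant subspaces, and irreducibility then forces the eigenspace to be everything.

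First, I would fix an eigenvalue $\alpha \in \CC$ of $A$, which exists because the characteristic polynomial of $A \in M_n(\CC)$ splits over the algebraically closed field $\CC$. Let $W = \ker(A - \alpha I) \subseteq \CC^n$, which is nontrivial by the definition of an eigenvalue. The next step is to show that $W$ is $G$-invariant with respect to the representation $\rho$. For any $w \in W$ and any $g \in G$, since $A$ commutes with $\rho(g)$, the matrix $A - \alpha I$ also commutes with $\rho(g)$, so
\begin{equation*}
(A - \alpha I) \rho(g) w \;=\; \rho(g) (A - \alpha I) w \;=\; \rho(g) \cdot 0 \;=\; 0,
\end{equation*}
which shows $\rho(g) w \in W$. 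Hence $W$ is a nonzero $G$-invariant subspace of $\CC^n$.

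By the hypothesis that $\rho$ is irreducible, the only $G$-invariant subspaces of $\CC^n$ are $\{0\}$ and $\CC^n$ itself. Since $W \neq \{0\}$, we must have $W = \CC^n$, which means $(A - \alpha I) v = 0$ for every $v \in \CC^n$, and therefore $A = \alpha I$, completing the proof.

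The main (and only) subtle point is the appeal to algebraic closedness of $\CC$ to guarantee an eigenvalue; without this, the argument would fail, for instance, over $\RR$, where one can have commuting matrices with no real eigenvalues (e.g.\ rotations). The rest is a direct unwinding of definitions, so I do not anticipate any serious obstacle.
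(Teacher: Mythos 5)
Your proof is correct, and it is the standard proof of this form of Schur's lemma: pick an eigenvalue $\alpha$ of $A$ (which exists because $\CC$ is algebraically closed), observe that $\ker(A-\alpha I)$ is a nonzero $G$-invariant subspace since $A-\alpha I$ commutes with every $\rho(g)$, and conclude by irreducibility that this kernel is all of $\CC^n$. Note that the paper itself does not supply a proof here; it cites the result as Lemma 2.24 of Isaacs, whose argument is the one you gave, so there is no substantive difference to report.
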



Note that if $z$ is in the center $Z(\CC[G])$ of the group algebra, then $z$ commutes with everything in $G$. As any representation $\rho$ is a homomorphism, $\rho(z)$ must commute with $\rho(g)$ for every $g \in G$ so that $\rho(z) = \alpha I$ (by Lemma~\ref{lem:diagonalmatrix}). This allows us to define the following function:

\begin{definition}\label{def:omegaFunction}Given $\chi \in \Irr(G)$, let $\rho$  be an irreducible representation affording $\chi$. Define a map $\omega_\chi: Z(\CC[G]) \rightarrow \CC$ from the center of $\CC[G]$ to $\CC$ by setting 
$$\omega_\chi(z) = \alpha$$
where $\alpha$ is such that $\rho(z) = \alpha I$.    
\end{definition}
Alternately, we define $\omega_\chi(z)$ for $z \in Z(\CC[G])$ such that $\rho(z) = \omega_\chi(z) I.$ The next proposition asserts that given $\chi$ an irreducible representation, $\omega_\chi$ is independent of the representation affording $\chi$:
%

\begin{proposition}
   Given an irreducible character $\chi \in \Irr(G)$, let $\omega_\chi : Z(\mathbb{C}[G]) \rightarrow \mathbb{C}$ be as above. Then $\omega_{\chi}$ is a well defined homomorphism. 
\end{proposition}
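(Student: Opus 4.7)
The plan is to verify both claims directly from the defining relation $\rho(z) = \omega_\chi(z) I$. I would split the argument into two short pieces: well-definedness of the scalar $\omega_\chi(z)$, and then the algebra-homomorphism property, each of which reduces to extending $\rho$ linearly to the group algebra.

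For well-definedness, let $\rho_1$ and $\rho_2$ be two irreducible representations affording $\chi$. As noted in the excerpt, similar representations afford the same character and conversely, so there exists an invertible $P$ with $\rho_1(g) = P \rho_2(g) P^{-1}$ for all $g \in G$. Extending both $\rho_i$ linearly to $\CC[G]$ gives $\rho_1(z) = P \rho_2(z) P^{-1}$ for every $z \in Z(\CC[G])$. If $\rho_2(z) = \alpha I$, then $\rho_1(z) = P(\alpha I)P^{-1} = \alpha I$ as well, so the scalar produced by Lemma \ref{lem:diagonalmatrix} is independent of the representation chosen to afford $\chi$. Equivalently, taking the trace of both sides of $\rho(z) = \omega_\chi(z) I$ yields the intrinsic formula
\[
\omega_\chi(z) \;=\; \frac{\chi(z)}{\chi(1)},
\]
where $\chi$ has been extended by $\CC$-linearity from $G$ to $\CC[G]$; this makes well-definedness manifest.

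For the homomorphism property, I would use the fact (already invoked in the excerpt) that any group representation $\rho : G \to \text{GL}(n; \CC)$ extends uniquely to an algebra homomorphism $\rho : \CC[G] \to M_n(\CC)$. Then for $z_1, z_2 \in Z(\CC[G])$ and $a, b \in \CC$ I compute
\[
\rho(a z_1 + b z_2) \;=\; a\,\rho(z_1) + b\,\rho(z_2) \;=\; \bigl(a\,\omega_\chi(z_1) + b\,\omega_\chi(z_2)\bigr) I,
\]
\[
\rho(z_1 z_2) \;=\; \rho(z_1)\,\rho(z_2) \;=\; \bigl(\omega_\chi(z_1)\,I\bigr)\bigl(\omega_\chi(z_2)\,I\bigr) \;=\; \omega_\chi(z_1)\,\omega_\chi(z_2)\,I,
\]
together with $\rho(1) = I$. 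Comparing each right-hand side with the defining relation $\rho(\cdot) = \omega_\chi(\cdot) I$ gives $\CC$-linearity, multiplicativity, and $\omega_\chi(1) = 1$, so $\omega_\chi$ is a unital $\CC$-algebra homomorphism.

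There is essentially no obstacle; the entire argument is a direct unwinding of definitions. The one mildly delicate check is that Lemma \ref{lem:diagonalmatrix} genuinely applies to $\rho(z)$ for every $z \in Z(\CC[G])$: this holds because $z$ commutes with each $g \in G$ inside the group algebra, and $\rho$ is an algebra homomorphism, so $\rho(z)$ commutes with every $\rho(g)$; irreducibility of $\rho$ then forces $\rho(z)$ to be scalar, legitimizing the definition that the rest of the proof takes for granted.
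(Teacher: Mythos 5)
Your proof is correct and follows essentially the same route as the paper: well-definedness comes from the fact that equivalent representations conjugate $\rho(z)$ to $\rho'(z)$, forcing the two scalar matrices to agree, and the homomorphism property is inherited directly from $\rho$ being an algebra homomorphism on $\CC[G]$. The additional observations you include — the intrinsic trace formula $\omega_\chi(z) = \chi(z)/\chi(1)$ and the explicit check that Lemma \ref{lem:diagonalmatrix} applies — are sound and make the argument more self-contained than the paper's terse version, but they do not change the underlying approach.
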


\begin{proof}
    To see that $\omega_\chi$ is well defined, consider two representations $\rho$ and $\rho'$ affording $\chi$. For $z \in Z(\CC[G])$, let $\alpha, \alpha' \in \CC$ such that $\rho(z) = \alpha I$ and $\rho'(z) = \alpha' I$. But since $\rho'$ is equivalent to $\rho$, $\alpha' I$ is similar to $\alpha I$ which implies $\alpha' = \alpha$. In addition, since $\rho$ is a homomorphism, $\omega_\chi$ is as well. \end{proof}

To apply the homomorphism $\omega_\chi$, a basis for $Z(\CC[G])$ will be highly relevant. Towards this end, we have the following definition:
\begin{definition}\label{def:classSum}
    Let $\calK$ be a conjugacy class of a finite group $G$ and let $K:=\sum_{g \in \calK} g \in \CC[G]$ be the \textbf{class sum over the conjugacy class} $\calK.$
\end{definition}

First note given a class sum $K$, $h K h^{-1} = K$ for any $h \in \CC[G]$. Hence, any class sum is in the center $Z(\CC[G])$. Furthermore, it is known (see, for instance, Theorem 2.4 of \cite{Isaacs}) that these class sums form a basis for $Z(\CC[G])$:

\begin{proposition}\label{prop:classsumbasis}
Let $\calK_1, \dots ,\calK_n$ be conjugacy classes of $G$. Let $K_i = \sum_{g \in \calK_i} g$. The set $\{K_1, \dots, K_n\}$ forms a basis for $\ZZ(\CC[G])$. Moreover, if $K_i K_j = \sum_{k} a_{ijk} K_k$, then the $a_{ijk}$ are non-negative integers.  
     
\end{proposition}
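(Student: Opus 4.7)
The plan is to verify the three required facts in turn: the $K_i$ lie in $Z(\CC[G])$, they are linearly independent in $\CC[G]$, and they span $Z(\CC[G])$; and then to identify the structure constants $a_{ijk}$ combinatorially.

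First I would show each $K_i$ is central. For any $h \in G$, conjugation $g \mapsto h g h^{-1}$ permutes the elements of the conjugacy class $\calK_i$, so $h K_i h^{-1} = \sum_{g \in \calK_i} h g h^{-1} = K_i$. Thus $h K_i = K_i h$ for all $h \in G$, and extending linearly, $K_i$ commutes with every element of $\CC[G]$. Linear independence is immediate from the fact that distinct conjugacy classes are disjoint subsets of $G$: the $K_i$'s are sums over disjoint subsets of the standard basis $\{g : g \in G\}$ of $\CC[G]$, so they form a linearly independent set.

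Next I would verify that the $K_i$ span $Z(\CC[G])$. Take an arbitrary $z = \sum_{g \in G} c_g g \in Z(\CC[G])$. For any $h \in G$, centrality gives $hzh^{-1} = z$, i.e., $\sum_{g \in G} c_g (hgh^{-1}) = \sum_{g \in G} c_g g$. Reindexing the left-hand side by replacing $g$ with $h^{-1}gh$ yields $c_{h^{-1}gh} = c_g$ for all $g,h \in G$. Hence the coefficient function $g \mapsto c_g$ is constant on conjugacy classes, so writing $c_i$ for the common value on $\calK_i$, we obtain $z = \sum_{i=1}^n c_i K_i$. This completes the proof that $\{K_1, \dots, K_n\}$ is a basis of $Z(\CC[G])$.

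Finally, for the structure constants, I would compute directly:
\begin{equation*}
K_i K_j = \sum_{g \in \calK_i} \sum_{h \in \calK_j} gh,
\end{equation*}
so the coefficient of any fixed $x \in G$ in $K_i K_j$ is $N(x) := |\{(g,h) \in \calK_i \times \calK_j : gh = x\}|$, a non-negative integer. Since $K_i K_j \in Z(\CC[G])$, the already-proved spanning result lets us write $K_i K_j = \sum_k a_{ijk} K_k$, and by construction $a_{ijk}$ equals $N(x)$ for any single representative $x \in \calK_k$; the fact that $N(x)$ is constant across $x \in \calK_k$ follows from centrality (or directly, from conjugating the defining set of pairs by an element carrying one representative of $\calK_k$ to another). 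Hence each $a_{ijk}$ is a non-negative integer, completing the proof. The only delicate point is the verification that $N(x)$ depends only on the conjugacy class of $x$, and this is exactly where centrality of $K_i K_j$ is used.
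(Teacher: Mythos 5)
Your proof is correct and complete; since the paper simply cites this as a known fact (Theorem 2.4 of Isaacs) without supplying its own argument, your write-up is essentially the standard textbook proof being referenced: centrality of class sums via conjugation-invariance, linear independence from disjointness of classes, spanning by observing central elements have class-constant coefficients, and non-negativity/integrality of the structure constants by counting pairs $(g,h) \in \calK_i \times \calK_j$ with $gh = x$.
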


Additionally, we can express $\omega_\chi(K)$ in terms of the character $\chi$ for any class sum $K \in Z(\CC[G])$:

\begin{proposition}\label{prop:omegachi}
    If $\calK$ is a conjugacy class of $G$ and $K:=\sum_{g \in \calK} g \in Z(\CC[G])$ is the class sum over the conjugacy class $\calK$, then $\omega_\chi(K) = \frac{\chi(g) |\calK|}{\chi(1)},$ where $g$ is an arbitrary element of $\calK.$
\end{proposition}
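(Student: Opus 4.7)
The plan is to extract $\omega_\chi(K)$ by taking the trace on both sides of the defining equation $\rho(K) = \omega_\chi(K) I$, where $\rho$ is any irreducible representation of $G$ affording $\chi$. This is a direct computation that uses only (i) the definition of $\omega_\chi$ established via Lemma \ref{lem:diagonalmatrix}, (ii) linearity of $\rho$ extended to $\CC[G]$, (iii) the definition of the character as the trace of $\rho$, and (iv) part (2) of Theorem \ref{thm:characterprops}, which says that $\chi$ is constant on conjugacy classes.

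First, I would fix an irreducible representation $\rho: G \to \GL(V)$ of dimension $\chi(1)$ affording $\chi$, and extend it linearly to $\rho: \CC[G] \to M_{\chi(1)}(\CC)$. Since $K \in Z(\CC[G])$, by construction of $\omega_\chi$ we have
\begin{equation*}
\rho(K) = \omega_\chi(K)\, I.
\end{equation*}
Taking traces on both sides gives $\Tr(\rho(K)) = \omega_\chi(K)\cdot \chi(1)$, since $\Tr(I) = \chi(1)$.

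Next, I would compute the left-hand side directly. By linearity of $\rho$ and the trace,
\begin{equation*}
\Tr(\rho(K)) = \Tr\!\left(\sum_{h \in \calK} \rho(h)\right) = \sum_{h \in \calK} \chi(h).
\end{equation*}
By part (2) of Theorem \ref{thm:characterprops}, $\chi$ is constant on $\calK$, so $\chi(h) = \chi(g)$ for every $h \in \calK$, and the sum equals $|\calK|\,\chi(g)$.

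Finally, equating the two expressions and dividing by $\chi(1)$ (which is a positive integer, hence nonzero) yields
\begin{equation*}
\omega_\chi(K) = \frac{\chi(g)\,|\calK|}{\chi(1)},
\end{equation*}
as desired. There is no real obstacle here; the only subtlety is making sure the extension of $\rho$ to $\CC[G]$ is linear so that $\rho(K) = \sum_{h \in \calK} \rho(h)$, but this is immediate from the universal property of the group algebra.
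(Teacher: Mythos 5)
Your proof is correct and matches the paper's argument exactly: take traces of both sides of $\rho(K) = \omega_\chi(K) I$, use linearity of the trace and constancy of $\chi$ on the conjugacy class to get $|\calK|\,\chi(g) = \omega_\chi(K)\,\chi(1)$, and divide by $\chi(1)$. The extra care you take in spelling out the linear extension to $\CC[G]$ is fine but not a substantive difference.
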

\begin{proof}
  Let $\rho$ be any representation affording $\chi$ is a character. By definition of $\omega_\chi$ we have \begin{equation*}\rho(K) = \omega_\chi(K) I.\end{equation*} Taking the trace of both sides and noting irreducible characters are class functions, we get
  $$\sum_{h \in \calK}\chi(h) = \omega_\chi(K)\chi(1) \implies |\calK| \chi(g) = \omega_\chi(K) \chi(1)$$
  where $g \in \calK$ is arbitrary. This implies $\omega_\chi(K) = \frac{\chi(g)|\calK|}{\chi(1)}$.
\end{proof}

We will also need a few more tools from probability and Fourier transforms in the setting of finite groups. We start by defining a distance between two probability distributions defined on a finite space:

\begin{definition}
    Suppose that $P$ and $Q$ are two probability distributions on a finite space $Y$. Then the \textbf{max norm difference} or the \textbf{total variation distance} of $P$ and $Q$, denoted $||P - Q||$ is given by \begin{equation*}||P - Q|| = \max_{B \subseteq Y} |P(B) - Q(B)|.\end{equation*}
\end{definition} 
It is known that the total variation distance between $P$ and $Q$ can alternately be expressed as
$$ ||P-Q|| = \frac{1}{2} \sum_{y \in Y} |P(y) - Q(y)|$$

We also use the notion of the Fourier transform of a probability measure. In our setting of finite groups, it takes the following form:

\begin{definition}
The \textbf{Fourier transform} of a probability measure $P$ on a finite group $G$ at a representation $\rho$ is defined to be
$$\hat{P}(\rho) = \sum_{g \in G} P(g) \rho(g)$$    
\end{definition}

All of the probability distributions in our setting will be constant on conjugacy classes. In such a case, Diaconis-Shahshahani showed that the Fourier transform takes a special form:

\begin{lemma}[Diaconis-Shahshahani \cite{DiaShah}] \label{lem:ds}Let $P$ be a probability measure on a finite group $G$ that is constant on conjugacy classes of $G$. Let $\rho$ be a linear representation of $G$. The Fourier transform of $P$ at $\rho$ takes on the special form,

$$\hat{P}(\rho) = \frac{1}{\dim(\rho)} \sum_{K} P(K) \cdot |K| \cdot \chi^\rho(K) \cdot I_{\dim (\rho)} $$

where 
\begin{enumerate}
    \item $\dim(\rho)$ is the dimension of $\rho$, 
    \item the sum is over conjugacy classes of $G$,
    \item $\chi^\rho$ is the character afforded by $\rho$, and 
    \item $I_{\dim(\rho)}$ is the identity matrix of dimension $\dim(\rho)$.
\end{enumerate}

\end{lemma}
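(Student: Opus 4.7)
The plan is to piece together three of the representation-theoretic facts already developed in this subsection, so the proof is essentially a chain of substitutions. The key observation is that the hypothesis that $P$ is a class function lets us collapse the sum defining $\hat P(\rho)$ over $G$ into a sum over conjugacy classes of class sums, and class sums are exactly the elements on which the scalar $\omega_{\chi^\rho}$ gives an explicit formula.

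Concretely, I would first reorganize the defining sum by conjugacy class. Because $P(g)$ depends only on the conjugacy class of $g$, writing $P(K)$ for the common value on class $K$ yields
\begin{equation*}
\hat P(\rho) \;=\; \sum_{g \in G} P(g)\,\rho(g) \;=\; \sum_{K} P(K) \sum_{g \in K} \rho(g) \;=\; \sum_{K} P(K)\,\rho(K_{\mathrm{sum}}),
\end{equation*}
where $K_{\mathrm{sum}} := \sum_{g \in K} g \in \CC[G]$ is the class sum and $\rho$ is extended linearly to $\CC[G]$. By Proposition \ref{prop:classsumbasis}, each $K_{\mathrm{sum}}$ lies in $Z(\CC[G])$, so Lemma \ref{lem:diagonalmatrix} gives $\rho(K_{\mathrm{sum}}) = \omega_{\chi^\rho}(K_{\mathrm{sum}})\,I_{\dim(\rho)}$. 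Proposition \ref{prop:omegachi} then evaluates the scalar: $\omega_{\chi^\rho}(K_{\mathrm{sum}}) = \chi^\rho(K)\,|K|/\chi^\rho(1) = \chi^\rho(K)\,|K|/\dim(\rho)$. Substituting back into the previous display produces the stated formula.

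Since this is a direct application of earlier tools, there is no real obstacle; the only point worth flagging is that Lemma \ref{lem:diagonalmatrix} requires $\rho$ to be irreducible, and indeed the conclusion that $\hat P(\rho)$ is a scalar multiple of $I_{\dim(\rho)}$ forces irreducibility — for a reducible $\rho = \bigoplus_i \rho_i$ the Fourier transform is only block-scalar with (generally) distinct blocks. So I would interpret the lemma as stated for irreducible $\rho$, and handle a general linear representation by decomposing into irreducibles and applying the identity to each summand.
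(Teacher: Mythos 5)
Your proof is correct, and there is nothing in the paper to compare it against: the paper states this lemma as a quoted result of Diaconis--Shahshahani \cite{DiaShah} and gives no proof of its own. Your argument is the standard one, and it is nicely self-contained within the paper's toolkit: sorting the defining sum $\sum_{g\in G}P(g)\rho(g)$ by conjugacy class using that $P$ is a class function, recognizing the class sums as central elements (Proposition \ref{prop:classsumbasis}), applying Lemma \ref{lem:diagonalmatrix} via the map $\omega_{\chi^\rho}$, and evaluating the scalar with Proposition \ref{prop:omegachi} to get $\frac{1}{\dim(\rho)}\sum_K P(K)|K|\chi^\rho(K)\,I_{\dim(\rho)}$.

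Your flag about irreducibility is also a genuine and correct point about the statement as printed. The lemma says ``linear representation,'' but the formula can only hold when $\hat P(\rho)$ is forced to be scalar, which requires $\rho$ irreducible: for instance, for $G=\ZZ/2\ZZ$ with $\rho$ the regular representation and $P$ concentrated on the nontrivial element, $\hat P(\rho)$ is the swap matrix while the right-hand side of the formula is $0$, since $\chi^\rho$ vanishes off the identity. This matches the original Diaconis--Shahshahani hypothesis, and it is consistent with the only place the paper uses the lemma, namely Lemma \ref{lem:sigmaTowardUniform}, where it is applied to the irreducible representations $\rho^\lambda$. So reading the lemma with ``irreducible'' in place of ``linear'' (or decomposing a general representation into irreducibles and noting the transform is then only block-scalar) is exactly the right interpretation, and your proof goes through without gaps.
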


In our proofs we will often be required to estimate the magnitude of irreducible characters. We state here two such key estimates. The first one is due to Larsen-Shalev:

\begin{theorem}[Larsen-Shalev \cite{LarSha}]\label{thm:larsenshalevbound} Fix $\alpha \in [0, 1]$. For all $\epsilon > 0$, there exists $N \in \NN$ such that if $n > N$ and $\sigma \in S_n$ with at most $n^\alpha$ cycles, then for any irreducible character $\chi \in \Irr(S_n)$, 
$$ |\chi(\sigma)| \leq \chi(1)^{\alpha + \epsilon}.$$

\end{theorem}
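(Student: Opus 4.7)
The plan is to reduce this character bound to a combinatorial question via the Murnaghan--Nakayama rule. The irreducible characters of $S_n$ are indexed by partitions $\lambda \vdash n$, and $\chi^\lambda(1) = f^\lambda$ is the number of standard Young tableaux of shape $\lambda$. The Murnaghan--Nakayama rule expresses $\chi^\lambda(\sigma)$ as a signed sum over sequences of border-strip removals from $\lambda$ whose sizes match the cycle lengths of $\sigma$, so the triangle inequality yields
$$|\chi^\lambda(\sigma)| \leq N(\lambda, \mu),$$
where $\mu$ is the cycle type of $\sigma$ and $N(\lambda,\mu)$ counts border-strip tableaux of shape $\lambda$ and content $\mu$. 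The task therefore reduces to showing that, uniformly in $\lambda$ and for $n$ sufficiently large, $N(\lambda,\mu) \leq (f^\lambda)^{\alpha+\epsilon}$ whenever $\mu$ has at most $n^\alpha$ parts.

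My first step would be to induct on the number of cycles $k = c(\sigma) \leq n^\alpha$. At each stage I would peel off a border strip of size equal to the largest remaining cycle length $\ell_1$, pass to the sub-partition $\lambda' \vdash n - \ell_1$ obtained after removal, and apply the inductive hypothesis to $\chi^{\lambda'}(\sigma')$, where $\sigma'$ is $\sigma$ with the $\ell_1$-cycle deleted. The number of border strips of a fixed size $\ell$ inside $\lambda$ is classically controlled by the number of cells with hook length divisible by $\ell$, while the hook-length formula relates $f^{\lambda'}$ to $f^\lambda$ after each removal. Telescoping these per-step bounds over the $k$ removals, one aims for an inequality of the form $N(\lambda,\mu) \leq (f^\lambda)^{g(k,n)}$ with $g(k,n) \to 0$ as $k/n \to 0$; under $k \leq n^\alpha$, this would yield $g(k,n) \leq \alpha + \epsilon$ for large $n$.

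The main obstacle is achieving this uniformly across all shapes $\lambda$. For "balanced" partitions the inductive bound is comfortable, but near the extremes of Young's lattice --- long hooks, near-rectangles, or shapes dominated by a single long row or column --- both $f^\lambda$ and $N(\lambda,\mu)$ approach their extremal asymptotics, and the desired inequality becomes delicate. Here the deep input is asymptotic representation theory in the spirit of Vershik--Kerov: sharp growth estimates for $f^\lambda$ in terms of the first few rows and columns of $\lambda$, together with matching upper bounds on ribbon counts via traces of ribbon operators on the infinite wedge space. A weaker precursor to keep in mind is Roichman's exponential character bound $|\chi^\lambda(\sigma)|/f^\lambda \leq q(\lambda)^{c^*(\sigma)}$ for some $q(\lambda) \in (0,1)$ and a suitable non-fixed-point statistic $c^*(\sigma)$; the refinement needed here is to convert the variable base $q(\lambda)^{c^*(\sigma)}$ into the single uniform exponent $(f^\lambda)^{\alpha+\epsilon-1}$, which is exactly what forces the $\epsilon > 0$ slack and an absolute threshold $N = N(\alpha,\epsilon)$.

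Having established the combinatorial bound for $n \geq N(\alpha,\epsilon)$, the theorem follows directly; small $n$ are absorbed by enlarging $N$. The payoff for the rest of the paper is that any character sum over conjugacy classes with few cycles decays almost as fast as $\chi(1)^{\alpha}$, which, combined with the dimension moments $\sum_\chi \chi(1)^{-s}$ for $s > 1$ controlling the $L^2$-mass of the non-trivial characters on $A_n$, is what drives the convergence estimates used later to derive the genus distribution and holonomy results.
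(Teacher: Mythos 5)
There is a genuine gap here, and it is worth noting first that the paper does not prove this statement at all: it is quoted verbatim as a black-box theorem of Larsen--Shalev \cite{LarSha}, so the ``proof'' the paper relies on is a citation to a substantial external work, not an argument reproduced in the text. Your proposal attempts to reprove that theorem, but it never gets past restating it. After applying Murnaghan--Nakayama and the triangle inequality, the entire content of the theorem becomes the uniform combinatorial claim $N(\lambda,\mu)\leq (f^{\lambda})^{\alpha+\epsilon}$ for all $\lambda\vdash n$ once $\mu$ has at most $n^{\alpha}$ parts; this is not a reduction but essentially the statement itself (with the extra risk that discarding the signs may already be too lossy, since the sign-free majorant $N(\lambda,\mu)$ can exceed $|\chi^{\lambda}(\sigma)|$ substantially). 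The inductive scheme you sketch --- peel off the largest cycle, control the number of removable border strips by hooks of the right length, and ``telescope'' to an exponent $g(k,n)\to 0$ --- is never actually carried out: no inequality of the form $N(\lambda,\mu)\leq (f^{\lambda})^{g(k,n)}$ is derived, and the step where $f^{\lambda'}$ is compared to $f^{\lambda}$ after a strip removal is exactly where uniformity over shapes breaks down.

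You acknowledge this obstacle, but the tools you invoke to close it do not do so. Roichman's bound has base $q(\lambda)$ that degenerates to $1$ precisely for the extremal shapes you flag (dominant first row or column), so it cannot be ``converted'' into a uniform exponent $(f^{\lambda})^{\alpha+\epsilon-1}$ without essentially redoing Larsen--Shalev's work; and ``sharp growth estimates for $f^{\lambda}$'' plus ``traces of ribbon operators on the infinite wedge'' is a gesture at a literature, not an argument. The actual proof in \cite{LarSha} proceeds quite differently, via virtual degrees and orbit-growth exponents attached to $\sigma$, with a delicate induction that handles the near-hook and near-rectangular shapes; none of that machinery is reconstructed here. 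In the context of this paper the correct move is simply to cite \cite{LarSha}, as the authors do; as a standalone proof, your proposal has a missing core step and would not compile into a complete argument.
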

The other estimate is due to Liebeck-Shalev:
\begin{theorem}[Liebeck-Shalev \cite{LieSha}]\label{thm:liebeckshalev} Let $s \in \RR_>0$ be fixed. Then,

$$\sum_{\chi \in \Irr(S_n)} \frac{1}{\chi(1)^s} = 2 + O\left(\frac{1}{n^s}\right).$$
\end{theorem}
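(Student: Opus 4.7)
The plan is to isolate the contribution from the two one-dimensional characters (which give the constant $2$) and then show the remaining tail sum is $O(n^{-s})$. For $n \geq 5$, the only $\chi \in \Irr(S_n)$ with $\chi(1) = 1$ are the trivial character $\chi^{(n)}$ and the sign character $\chi^{(1^n)}$, so these together contribute exactly $2$. The next-smallest irreducibles have dimension $n-1$, namely $\chi^{(n-1,1)}$ and its sign-twist $\chi^{(2,1^{n-2})}$; their contribution is $\frac{2}{(n-1)^s} = O(n^{-s})$, which already matches the claimed rate. The content of the theorem is thus that all remaining characters contribute $O(n^{-s})$ collectively.

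To bound the remaining sum, I would split the partitions $\lambda \vdash n$ other than $(n), (1^n), (n-1,1), (2,1^{n-2})$ according to the size of $\chi^\lambda(1)$. Concretely, fix a large parameter $M > 2/s$ and partition the characters into those with $\chi(1) \leq n^M$ (the \emph{moderate} regime) and those with $\chi(1) > n^M$ (the \emph{large} regime). For the large regime, I would use the global identity $\sum_{\chi \in \Irr(S_n)} \chi(1)^2 = n!$ together with $|\Irr(S_n)| = p(n) = e^{O(\sqrt{n})}$: each such character satisfies $\chi(1)^{-s} \leq n^{-Ms}$, and there are at most $p(n)$ of them, giving a total of $p(n) \cdot n^{-Ms} = O(n^{-s})$ for $M$ large enough.

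For the moderate regime, I would invoke explicit enumerations of small-dimensional irreducibles of $S_n$ (for instance Rasala's results from 1977, which list the smallest few dimensions together with their multiplicities), together with a hook-length-formula argument showing that only a $O(n^{O(1)})$ set of partitions $\lambda \vdash n$ can have $\chi^\lambda(1) \leq n^M$. The crude bound $\chi^\lambda(1) \geq n^2/c$ for any $\lambda$ outside the explicit small list, together with the polynomial count, then gives a contribution of $O(n^{-s})$ from each dimension level and an acceptable sum across levels.

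The main obstacle is the moderate regime: one must both enumerate (or accurately count) the partitions whose characters have polynomial dimension and show their contributions telescope with enough decay in $n$. A brute-force approach via the hook-length formula applied to partitions with $o(\sqrt{n})$ cells removed from either the top row or leftmost column is feasible but delicate, and the extraction of the sharp constant hidden in the $O(n^{-s})$ bound relies on Rasala's lower bounds on $\chi^\lambda(1)$ for such near-extremal $\lambda$. The elegance of Liebeck-Shalev is in packaging all these ingredients uniformly; my plan replicates the architecture but defers the delicate counting to cited dimension bounds.
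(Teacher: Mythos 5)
The paper does not prove this statement at all: it is quoted as an external input from Liebeck--Shalev \cite{LieSha}, so there is no internal proof to compare against, and your proposal must stand on its own. As written it has a genuine gap in the ``large regime''. You bound the contribution of the characters with $\chi(1) > n^M$ by $p(n)\cdot n^{-Ms}$ and assert this is $O(n^{-s})$ for $M$ large enough; but $p(n) = e^{\Theta(\sqrt{n})}$ is superpolynomial, so $p(n)\,n^{-Ms} \to \infty$ for every fixed $M$, and no polynomial dimension threshold can be beaten by the crude count $|\Irr(S_n)| = p(n)$. If you instead raise the threshold to $T_n \approx e^{c\sqrt{n}}$ so that the union bound closes, then the ``moderate regime'' $\chi(1) \le T_n$ is no longer covered by Rasala-type enumerations, which describe only characters of dimension at most a fixed polynomial in $n$; the two regimes never meet. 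The identity $\sum_{\chi}\chi(1)^2 = n!$ that you invoke is never actually used and cannot repair this, since it carries no lower-bound information about how many characters have small dimension.

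The missing idea is to stratify $\Irr(S_n)$ not by a dimension cutoff but by the depth $m(\lambda) = n - \max(\lambda_1, \lambda_1')$. For each $m \ge 1$ the number of $\lambda \vdash n$ of depth $m$ is at most $2p(m)$ --- a quantity depending on $m$ only, not on $n$ --- while the hook-length formula gives a dimension lower bound of binomial type, e.g.\ $\chi^\lambda(1) \ge \binom{n-m}{m}$ for $n$ large relative to $m$ (the transposed partitions being handled by the symmetry $\chi^{\lambda'}(1) = \chi^{\lambda}(1)$). One then estimates
\begin{equation*}
\sum_{\substack{\chi \in \Irr(S_n) \\ \chi(1) > 1}} \frac{1}{\chi(1)^s} \;\le\; \sum_{m \ge 1} \frac{2\,p(m)}{\binom{n-m}{m}^{s}},
\end{equation*}
and the $m = 1$ term, of size $2(n-1)^{-s}$, dominates the entire sum because $\binom{n-m}{m}^{-s}$ decays much faster in $m$ than $p(m)$ grows; this yields exactly $2 + O(n^{-s})$. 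This trade-off, in which the multiplicity of a stratum is controlled by $p(m)$ while the dimension lower bound improves with both $n$ and $m$, is the mechanism of Liebeck--Shalev's proof. Without it, or some equivalent device in which the count of characters below a threshold improves as the threshold grows, your two-regime split cannot be closed.
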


\subsection{Tools from Combinatorics}

The random models we use are based on permutations and the representation theory of symmetric groups. These objects are intimately understood using tools from combinatorics -- in particular, the theory of partitions and symmetric functions. In this section we give a brief exposition on these topics relevant for our proofs. For a full reference to the content in this section, see \cite{Stan1, Stan2}.

\subsubsection{Partitions, Young Tableau and the Murnaghan-Nakayama Rule}


For $n \in \NN$, a \textbf{partition} $\lambda$ of $n$, denoted $\lambda \vdash n$ is a sequence  $\lambda=(\lambda_1, \dots, \lambda_k) \in \NN^k$ such that $\sum \lambda_i = n$ and $\lambda_1 \geq \lambda_2 \geq \dots \geq \lambda_k$. 
Two partitions of $n$ are identical if they only differ in the number of zeros. For example $(3, 2, 1, 1) = (3, 2, 1, 1, 0, 0)$. Informally a partition of $n$ can be thought of as a way of writing $n$ as a sum of positive integers,  disregarding the order of the integers. The non-zero $\lambda_i$ are called \textbf{parts} of $\lambda$. If $\lambda$ has $\xi_i$ parts equal to $i$, then we can write $\lambda = \ideal{1^{\xi_1}, 2^{\xi_2}, \dots}$ where terms with $\xi_i = 0$ and the superscript $\xi_i = 1$ is omitted. The set of all partitions of $n$ is denoted by $\Par(n)$ and let $\Par := \bigcup_{n \in \NN} \Par(n)$.

\begin{definition}[Young diagram]\label{def:youngdiagram}
    A partition $(\lambda_1, \dots, \lambda_k) \vdash n$ can be visually represented by a \textbf{Young diagram}, which is a left-justified array of boxes with $\lambda_i$ boxes in the $i$-th row.
\end{definition} 
See Figure~\ref{fig:youngdiagram} for an example illustrating a Young diagram (together with some other terms that will be defined shortly). The squares in a Young diagram are identified using tuples $(i,j)$ where $i$ is the row corresponding to part $\lambda_i$ and $1 \leq j\leq \lambda_i$ is the position of the square along that row. 
\begin{definition}[Hook length and content]\label{def:hooklength}
   Given a square $r = (i,j) \in \lambda$, the \textbf{hook length of $\lambda$ at $r$}, denoted $h(r)$, is the number of squares directly to the right or directly below $r$, counting $r$ itself once. The \textbf{hook length product of $\lambda$}, denoted $H_\lambda$ is the product, 
$$H_\lambda = \prod_{r \in \lambda} h(r)$$ 
The \textbf{content} of $\lambda$ at $r = (i, j)$ is the integer $\cont(r) = j-i$.
\end{definition}

More generally, the boxes of the Young diagram can be filled with entries from a totally ordered set (usually a set of positive integers) to obtain a \textbf{Young tableau}. 

\begin{definition}[Semi-standard Young tableau and border-strip tableau]\label{def:SSYTBST} A Young tableau is called a \textbf{semistandard Young tableau (SSYT)} if the entries of the tableau weakly increase along each row and strictly increase down each column. A \textbf{border-strip tableau (BST)} is a Young tableau where the entries in every row and column are weakly increasing, and the set of squares filled with the integer $i$ (called a \textbf{border-strip}) form a connected shape with no 2×2-square. The \textbf{type of a SSYT or BST} is a sequence $\xi = (\xi_1, \dots )$ where the SSYT or BST contains $\xi_1 1's$, $\xi_2 \, 2's$ and so on. The \textbf{shape of an SSYT or BST} is the associated partition.
    
\end{definition}

\begin{definition}[Height]\label{def:height}
    In a border-strip tableau, the \textbf{height of a border-strip} is one less than the number of rows it touches. The \textbf{height of a border-strip tableau} $T$, denoted $h(T)$, is the sum of the heights of its border-strips. 
\end{definition}

\begin{figure}[h!!]
\begin{minipage}{0.19\linewidth}
\centering
$\begin{ytableau} {}&&&& \cr {}&& \cr {}&& \cr {}& \cr {}
\end{ytableau}$
\end{minipage}
\begin{minipage}{0.19\linewidth}
\centering
$\begin{ytableau} {9}&{7}&{5}&{2}&{1}\cr {6}&{4}&{2} \cr {5}&{3}&{1} \cr {3}&{1} \cr {1}
\end{ytableau}$
\end{minipage}
\begin{minipage}{0.19\linewidth}
\centering
$\begin{ytableau} {0}&{1}&{2}&{3}&{4} \cr {-1}&{0}&{1} \cr {-2}&{-1}&{0} \cr {-3}&{-2} \cr {-4}
\end{ytableau}$
\end{minipage}
\begin{minipage}{0.19\linewidth}
\centering
$\begin{ytableau} {1}&{1}&{2}&{3}&{3} \cr {2}&{2}&{3} \cr {5}&{5}&{5} \cr {6}&{8} \cr {7}
\end{ytableau}$
\end{minipage}
\begin{minipage}{0.19\linewidth}
\centering
$\begin{ytableau} {1}&{1}&{3}&{3}&{3} \cr {2}&{2}&{3} \cr {5}&{5}&{5} \cr {6}&{8} \cr {7}
\end{ytableau}$
\end{minipage}
\caption{Young diagram and tableau for the partition (5,3,3,2,1). From left to right: Young diagram;  Tableau with hook lengths; Tableau with contents; Example of an SSYT that is not a BST (since, for instance, the squares filled with 2 is not connected); Example of a BST. The height is $h(T) = 0+0+1+0+0+0+0 = 1$ (since the only border-strip that touches two rows is the one corresponding to squares filled with 3).}
\label{fig:youngdiagram}
\end{figure}
Note that given a permuation $\pi \in S_n$, the cycle type of $\pi$ is a partition of $n$. Moreover, two permutations are conjugate if and only if they have the same cycle type. Hence, partitions index conjugacy classes of $S_n$ and we denote $\cyc(\pi)$ to mean the cycle type and the partition simultaneously. Additionally, it is known that partitions of $n$ also index irreducible representations and irreducible characters of $S_n$ canonically. Hence, we will denote by $\rho^\lambda$ the irreducible representation indexed by $\lambda$ and by $\chi^\lambda$ the character of the representation $\rho^\lambda$, i.e. $\chi^\lambda(\pi) = \Tr(\rho^\lambda(\pi))$ for any $\pi\in S_n$. In particular, note that the dimension, $\dim(\rho^\lambda) = \chi^\lambda(1)$ where $1 \in S_n$ denotes the identity permutation.  

One instance of the deep relation between partitions and the representation theory of the symmetric group is the Hook-length formula which gives a relation between the degree (or dimension)  of a representation and the hook length product of the partition that is used to index it:

\begin{lemma}[Hook-length formula]\label{lem:hooklength}
For a partition $\lambda \vdash n$, and associated representation $\rho^\lambda$, the dimension of $\rho^\lambda$ is given by $$ \dim(\rho^\lambda) = \frac{n!}{H_\lambda}$$
\end{lemma}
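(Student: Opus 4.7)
The plan is to establish the hook-length formula via the classical probabilistic argument of Greene--Nijenhuis--Wilf. First, I would invoke the standard fact (from the Specht module construction) that $\dim(\rho^\lambda)$ equals $f^\lambda$, the number of standard Young tableaux (SYT) of shape $\lambda$, i.e., fillings of $\lambda$ with $1, \dots, n$ strictly increasing along rows and down columns. It then suffices to show that $f^\lambda = n!/H_\lambda$.

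I would proceed by induction on $n = |\lambda|$, with trivial base case $n = 1$. For the inductive step, observe that in any SYT of shape $\lambda$ the entry $n$ must sit at a removable corner cell $c$ (a cell with no cell directly to its right or below), and deleting this cell produces an SYT of shape $\lambda \setminus c$. This yields the branching recursion $f^\lambda = \sum_c f^{\lambda \setminus c}$, the sum ranging over removable corners. Combined with the inductive hypothesis $f^{\lambda \setminus c} = (n-1)!/H_{\lambda \setminus c}$, the desired identity reduces to the purely combinatorial claim
$$\sum_{c \text{ a corner}} \frac{1}{H_{\lambda \setminus c}} = \frac{n}{H_\lambda}.$$

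To prove this identity, I would introduce the \emph{hook walk} on $\lambda$: start at a uniformly chosen cell, and at each subsequent step move to a uniformly chosen cell, distinct from the current one, lying in the same row to the right or the same column below (i.e., in the hook of the current cell). The walk terminates precisely when it reaches a corner, whose hook has size one. The identity above would follow immediately by establishing that for each removable corner $c$,
$$\Pr(\text{walk terminates at } c) = \frac{H_\lambda}{n \cdot H_{\lambda \setminus c}},$$
and then summing over corners (the probabilities summing to one).

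The main obstacle is computing this termination probability. The key move is to condition on the set of rows $i_1 < \cdots < i_r$ and columns $j_1 < \cdots < j_s$ visited by the walk, where $(i_r, j_s) = c$. A careful enumeration shows that the probability of any trajectory passing through exactly these rows and columns factors as $\frac{1}{n} \prod_{k=1}^{r-1}\bigl(h(i_k,j_s)-1\bigr)^{-1} \prod_{l=1}^{s-1}\bigl(h(i_r,j_l)-1\bigr)^{-1}$. Summing over the intermediate row and column subsets, each inner sum telescopes via the elementary identity $\sum_{i} \prod_{k < i}(a_k - 1)^{-1} \cdot a_i^{-1} \cdot \prod_{k > i} \text{(...)}$ evaluated on the hook lengths, collapsing to the ratio $H_\lambda / H_{\lambda \setminus c}$ times $1/n$. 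This telescoping computation, which exploits the fact that removing corner $c$ modifies exactly the hooks of cells in its row and column, is the heart of the argument; the remainder is bookkeeping.
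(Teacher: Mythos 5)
The paper never proves this lemma: it is quoted as classical background (with \cite{Stan1,Stan2} as the stated references for that section), so there is no internal argument to measure yours against. What you propose is the standard Greene--Nijenhuis--Wilf probabilistic proof, and its skeleton is sound: $\dim\rho^\lambda=f^\lambda$ via Specht modules, the branching recursion $f^\lambda=\sum_c f^{\lambda\setminus c}$, and the reduction to the identity $\sum_c H_{\lambda\setminus c}^{-1}=n/H_\lambda$, which the hook walk does deliver. Two caveats on the part you call the heart of the argument. First, the conditional statement you assert --- that the probability of the event that the walk visits exactly the rows $i_1<\cdots<i_r$ and columns $j_1<\cdots<j_s$ with $(i_r,j_s)=c$ equals $\frac{1}{n}\prod_{k<r}\bigl(h(i_k,j_s)-1\bigr)^{-1}\prod_{l<s}\bigl(h(i_r,j_l)-1\bigr)^{-1}$ --- is correct but is itself the main lemma of GNW; individual trajectories with these projections have \emph{different} probabilities, and only their sum has this product form, so ``a careful enumeration shows'' conceals an induction on the length of the walk that you would need to carry out. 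Second, the final summation is not a telescoping identity (the formula you display is garbled): summing over all choices of intermediate row subset and column subset simply factors, giving, for $c=(a,b)$, the total $\frac{1}{n}\prod_{i<a}\Bigl(1+\frac{1}{h(i,b)-1}\Bigr)\prod_{j<b}\Bigl(1+\frac{1}{h(a,j)-1}\Bigr)$, which equals $H_\lambda/(n\,H_{\lambda\setminus c})$ precisely because deleting the corner decrements exactly the hooks in its row and column and removes the hook of size one at $c$. With those two steps actually written out, your argument is a complete, self-contained proof --- more than the paper attempts --- though for the purposes of this paper a citation to the standard literature is all that is intended.
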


Using the hook-length formula and noting that the dimension of a representation is given by the associated character evaluated at the identity, we can get the following estimate which we will utilize in the proof of Lemma \ref{lem:largedeviations}.

\begin{proposition}\label{prop:dimensionbound}

For $1 \leq k \leq \floor{n/2}$, let $(n-k,k)$ be the partition of $n$ with parts $n-k$ and $k$. Then,
$$\chi^{(n-k, k)}(1) = {n \choose k} \frac{n-k-(k-1)}{n-k+1} \leq {n \choose k}$$ 
In particular, the dimension of the irreducible representation associated to $(n-k,k)$ is less than or equal to $ {n \choose k}$.

\end{proposition}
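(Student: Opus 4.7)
The plan is to apply the hook-length formula (Lemma \ref{lem:hooklength}) directly to $\lambda = (n-k,k)$, compute $H_\lambda$ explicitly from the two-row Young diagram, and then read off both the stated identity and the inequality. Since the diagram has only two rows, everything reduces to a short bookkeeping exercise.

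First, I would partition the boxes of $\lambda$ into three regions whose hook lengths are easy to read off by inspection. Boxes $(1,j)$ with $1 \leq j \leq k$ (the first-row cells sitting above a second-row cell) have $n-k-j$ cells to the right and one below, giving hook length $n-k-j+2$. Boxes $(1,j)$ with $k+1 \leq j \leq n-k$ have nothing below, hence hook length $n-k-j+1$. Boxes $(2,j)$ with $1 \leq j \leq k$ have hook length $k-j+1$. Taking the product of the three regions and reindexing each as a factorial would yield
\[ H_\lambda \;=\; \frac{(n-k+1)!}{(n-2k+1)!}\cdot(n-2k)!\cdot k! \;=\; \frac{(n-k+1)!\,k!}{n-2k+1}. \]

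Plugging this into the hook-length formula gives
\[ \chi^{(n-k,k)}(1) \;=\; \frac{n!}{H_\lambda} \;=\; \frac{n!\,(n-2k+1)}{(n-k+1)!\,k!} \;=\; \binom{n}{k}\cdot\frac{n-2k+1}{n-k+1}, \]
which is the stated formula after noting that $n-k-(k-1) = n-2k+1$. The inequality $\chi^{(n-k,k)}(1) \leq \binom{n}{k}$ is then immediate, since $0 \leq n-2k+1 \leq n-k+1$ for $1 \leq k \leq \floor{n/2}$.

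I do not anticipate any real obstacle: the argument is a two-row hook computation. The only minor point of care is the degenerate case $k = \floor{n/2}$ with $n = 2k$, where the middle region of first-row cells is empty; in that case the empty product is $1$, and the final expression remains correct because $n-2k+1 = 1$.
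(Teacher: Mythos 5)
Your proposal is correct and follows essentially the same route as the paper: both compute the hook lengths of the two-row diagram in the same three regions, obtain $H_{(n-k,k)} = \frac{(n-k+1)!\,k!}{n-2k+1}$, and conclude via the hook-length formula. The extra remark on the degenerate case $n=2k$ is fine but not needed, since the empty product convention already handles it.
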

\begin{proof}
Let $r = (i,j)$ be a square in the Young diagram of $(n-k, k)$. If $i =1$ and $j \leq k$, the hook length $h(r) = 1 + 1 + (n-k-j)$ since there is 1 square below $r$ and $n-k-j$ to the right of $r$. If $i = 1$ and $j > k$, then $h(r) = 1 + (n-k-j)$ since there are no squares below $r$ and $n-k-j$ squares to the right of $r$. If $i = 2$, then $h(r) = 1+(k-j)$ since there are no squares below it and $k-j$ squares to the right. 

Putting all of these hook lengths together, we get the hook length product of $(n-k,k)$:
\begin{align*}
    H_{(n-k, k)} &= \prod_{j=1}^{k} (n-k-j+2) \prod_{j=k+1}^{n-k} (n-k-j+1) \prod_{j=1}^k(k-j+1)\\
    &= \frac{(n-k+1)!k!}{(n-k-(k-1))}.
\end{align*}
So, using the hook-length formula we have,
\begin{align*}
    \chi^{(n-k, k)}(1) &= \frac{n!(n-k-(k-1))}{(n-k+1)!k!}\\
    &= \frac{n!}{(n-k)!k!} \cdot \frac{1}{n-k+1}\cdot (n-k-(k-1))\\
    &={n \choose k} \frac{n-k-(k-1)}{n-k+1} \leq {n \choose k} 
\end{align*}\end{proof}

Another important and powerful formula that allows one to use the theory of partitions to evaluate the value of irreducible characters of $S_n$ on conjugacy classes is the Murnaghan-Nakayama Rule. Recall that irreducible characters are class functions (i.e. constant on conjugacy classes) and that conjugacy classes in the symmetric group are indexed by partitions. Also recall Definitions~\ref{def:SSYTBST} and \ref{def:height} for definitions shape, type and height of a border-strip tableau.

\begin{theorem}[Murnaghan-Nakayama Rule]\label{thm:murnaghannakayama}
 Let $\lambda$ and $\mu_n$ be two partitions of $n$. Then, the character associated to $\lambda$ evaluated at any element of the conjugacy class $\calK_{\mu_n}$ is given by
\begin{equation}\label{eq:murnaghannakayama}\chi^\lambda (\calK_{\mu_n}) = \sum_{\substack{T \text{ is BST of}\\ \text{shape } \lambda, \text{ type }\mu_n}}(-1)^{ht(T)}\end{equation}
where the sum is over all border-strip tableau (BST) of shape $\lambda$ and type $\mu_n$, and $ht(T)$ denotes the height of the tableau $T$. 
\end{theorem}
The Murnaghan-Nakayama Rule will be used in Lemma~\ref{lem:onecylindercharacter} to evaluate possible irreducible character values evaluated on $n$-cycles.

\subsubsection{Power sum symmetric functions and Schur functions}

Let $x = (x_1, x_2,\dots)$ be a set of indeterminates (i.e. formal placeholder variables), and let $n \in \NN$. 

Denote the set of all homogeneous symmetric functions of degree $n$ over $\QQ$ as $\Lambda^n$. It is known than $\Lambda^n$ is a $\QQ$-vector space. We will utilize two standard bases for $\Lambda^n$ -- power sum symmetric functions and Schur functions. 

The \textbf{power sum symmetric functions}, denoted $p_\lambda$ are indexed by partitions $\lambda$ and defined as, 
\begin{align*}
&p_m := \sum_{i} x_i^m, \hspace{1cm} m \geq 1 \hspace{1cm} \text{ (with }p_0 = 1)\\
&p_\lambda := p_{\lambda_1}p_{\lambda_2}\dots p_{\lambda_k} \hspace{1cm} \text{ if }\lambda = (\lambda_1, \lambda_2, \dots, \lambda_k ) \vdash n
\end{align*}

The \textbf{Schur functions}, denoted $s_\lambda$,  are also indexed by partitions $\lambda$. They are defined as the formal power series,
$$ s_\lambda(x) = \sum_{T} x^T$$
where
\begin{itemize}
\item the sum is over all semi-standard Young tableau (SSYT) $T$ of shape $\lambda$ (i.e. those SSYTs whose associated partition of $n$ is $\lambda$).
\item $x^T := x_1^{\xi_1(T)} x_2^{\xi_2(T)} \dots$ if $T$ is an SSYT of type $\xi$ (i.e. if $T$ contains $\xi_1(T)$ 1's, $\xi_2(T)$ 2's, etc.)
\end{itemize}
While it is not hard to see that power sum symmetric functions are indeed symmetric functions, it is a non-trivial theorem that Schur functions  are indeed symmetric functions. 
The change of basis formula to go from Schur functions $s_\lambda$ to power sum symmetric functions $p_\lambda$ exposes the deep relation these objects have with the irreducible characters of $S_n$. 

\begin{theorem}[Expressing a Schur function as a sum of power sum symmetric functions] For $\lambda \vdash n$,
$$ s_\lambda = \sum_{\nu \vdash n} \frac{|\calK_\nu|}{n!} \chi^\lambda (\calK_\nu)p_\nu$$
where the sum is over partitions $\nu$ of $n$, $\chi^\lambda$ is the irreducible character of $S_n$ indexed by $\lambda$, $\calK_\nu$ is the conjugacy class of $S_n$ with cycle type $\nu$.    
\end{theorem}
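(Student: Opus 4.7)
The plan is to prove this change-of-basis formula via the Frobenius characteristic map. Define the linear map $\ch$ from class functions on $S_n$ to $\Lambda^n$ by
$$\ch(f) := \sum_{\nu \vdash n} \frac{|\calK_\nu|}{n!} f(\nu)\, p_\nu.$$
The statement of the theorem is then precisely the assertion $\ch(\chi^\lambda) = s_\lambda$, so it suffices to prove this identification.

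To set up the framework, I would introduce the Hall inner product on $\Lambda^n$ defined on the $p$-basis by $\langle p_\lambda, p_\mu\rangle := z_\lambda\,\delta_{\lambda\mu}$ with $z_\lambda := n!/|\calK_\lambda|$, together with the standard inner product on class functions $\langle f, g\rangle_{S_n} := \frac{1}{n!}\sum_{\pi \in S_n} f(\pi)\overline{g(\pi)}$. A direct computation, using only these definitions and orthogonality of indicators of conjugacy classes, shows $\ch$ is an isometry. Next, a standard argument using the Cauchy identity $\prod_{i,j}(1-x_iy_j)^{-1} = \sum_\lambda s_\lambda(x) s_\lambda(y)$ (or equivalently the RSK correspondence) shows $\{s_\lambda\}_{\lambda \vdash n}$ is an orthonormal basis of $\Lambda^n$ under $\langle\cdot,\cdot\rangle$. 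Since by Theorem \ref{thm:characterprops}(3) the irreducible characters $\{\chi^\lambda\}_{\lambda \vdash n}$ form an orthonormal basis of class functions on $S_n$, the isometry $\ch$ sends one orthonormal basis to some orthonormal basis of $\Lambda^n$; this alone, however, does not force $\ch(\chi^\lambda) = s_\lambda$.

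The main obstacle is thus to pin down this precise matching rather than just an orthonormal change of basis. I would do so using the Murnaghan-Nakayama rule (Theorem \ref{thm:murnaghannakayama}) together with its companion Pieri-type identity
$$p_k \cdot s_\mu \;=\; \sum_{\lambda/\mu \text{ a border-strip of size } k} (-1)^{ht(\lambda/\mu)}\, s_\lambda,$$
which can be proved directly from the bialternant definition of Schur functions together with the generating-function identity $\log \prod_i (1-x_i t)^{-1} = \sum_{k\geq 1} p_k t^k/k$. Iterating this identity on the factorization $p_\nu = p_{\nu_1} p_{\nu_2} \cdots p_{\nu_l}$ (with $s_{\emptyset} = 1$) enumerates sequences of nested partitions whose successive skew shapes are border-strips of the prescribed sizes. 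These sequences are in bijection with border-strip tableaux of shape $\lambda$ and type $\nu$, with the product of signs equal to $(-1)^{ht(T)}$, so comparing with the Murnaghan-Nakayama rule yields
$$p_\nu \;=\; \sum_{\lambda \vdash n} \chi^\lambda(\nu)\, s_\lambda.$$

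Finally, to recover the theorem, expand $s_\lambda = \sum_\nu c_{\lambda\nu}\, p_\nu$ and take the Hall inner product of both sides with $p_\mu$: the right-hand side gives $c_{\lambda\mu}\, z_\mu$, while pairing the Schur expansion of $p_\mu$ just derived with $s_\lambda$ and using orthonormality of Schur functions gives $\langle s_\lambda, p_\mu\rangle = \chi^\lambda(\mu)$. Equating the two expressions yields $c_{\lambda\mu} = \chi^\lambda(\mu)/z_\mu = \frac{|\calK_\mu|}{n!}\chi^\lambda(\mu)$, which is the claimed formula.
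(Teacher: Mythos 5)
Your argument is correct, but it is worth saying up front that the paper itself does not prove this statement at all: it is quoted as standard background on symmetric functions, with the whole subsection referred to Stanley \cite{Stan1, Stan2}. What you have written is essentially a reconstruction of the textbook development (Hall inner product, orthonormality of the Schur basis via the Cauchy identity/RSK, the border-strip Pieri rule, and duality), and as an outline it goes through: the final paragraph correctly extracts $c_{\lambda\mu} = \chi^\lambda(\mu)/z_\mu = \frac{|\calK_\mu|}{n!}\chi^\lambda(\mu)$ from $\langle s_\lambda, p_\mu\rangle = \chi^\lambda(\mu)$ and $\langle p_\nu, p_\mu\rangle = z_\mu\delta_{\nu\mu}$. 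Two remarks. First, your opening paragraph (the characteristic map $\ch$, its isometry, and orthonormality of characters) is never used in the end -- you honestly note it cannot pin down the matching, and your actual proof only needs orthonormality of the $s_\lambda$, the expansion $p_\mu = \sum_\lambda \chi^\lambda(\mu) s_\lambda$, and the definition of the Hall product; you could delete that setup. Second, granting the facts the paper already records, there is a shorter route: substituting Theorem \ref{thm:powersumtoschur} into $\sum_\nu \frac{|\calK_\nu|}{n!}\chi^\lambda(\nu)p_\nu$ and applying first orthogonality of characters (Theorem \ref{thm:characterprops}) collapses the double sum directly to $s_\lambda$, with no inner product on $\Lambda^n$ needed. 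The only caveat to your more self-contained version is one of logical bookkeeping rather than correctness: you invoke the Murnaghan--Nakayama rule (Theorem \ref{thm:murnaghannakayama}) and the bialternant definition of Schur functions, and in the standard development (e.g.\ Stanley) the Murnaghan--Nakayama rule is itself usually derived from the very identity you are proving, while the paper defines $s_\lambda$ via semistandard tableaux, so the equivalence of definitions is an additional (standard) input; since the paper treats these as black boxes, your use of them is legitimate, but a from-scratch reader should be told which facts are taken as primitive.
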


Conversely, the change of basis formula from power sum symmetric functions to Schur functions is as follows:

\begin{theorem}[Expressing a power sum symmetric function as a sum of Schur functions]\label{thm:powersumtoschur} The power sum symmetric functions can be expressed as a linear combination of the Schur functions in the following way:
$$ p_\mu = \sum_{\lambda \vdash n} \chi^\lambda (\calK_{\mu_n}) s_\lambda$$
\end{theorem}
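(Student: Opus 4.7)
The plan is to invert the change-of-basis formula stated in the previous theorem using the second orthogonality relation for the irreducible characters of $S_n$. I would introduce two square matrices indexed by partitions $\lambda, \mu \vdash n$: set $B_{\lambda,\mu} := \chi^\lambda(\calK_\mu)$ and $A_{\lambda,\nu} := \frac{|\calK_\nu|}{n!}\chi^\lambda(\nu)$, and let $D$ be the diagonal matrix with entries $D_{\nu,\nu} := |\calK_\nu|/n!$. Then $A = BD$, and the preceding theorem asserts precisely the matrix identity $s = Ap$, where $s$ and $p$ denote the column vectors of Schur functions and power-sum symmetric functions respectively.

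The main step is to show that $A$ is invertible with $A^{-1} = B^T$. For this I would invoke Theorem \ref{thm:characterprops}(4). Because each element of $S_n$ is conjugate to its inverse, every irreducible character of $S_n$ is real-valued (indeed integer-valued), so the complex conjugate in the second orthogonality relation may be dropped, yielding
$$\sum_{\lambda \vdash n} \chi^\lambda(\mu)\,\chi^\lambda(\nu) = \frac{n!}{|\calK_\mu|}\,\delta_{\mu,\nu}.$$
This is exactly the statement that $B^T B = D^{-1}$, and hence $B^T A = B^T B D = I$, so $A^{-1} = B^T$ as desired.

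Since both $\{s_\lambda\}_{\lambda \vdash n}$ and $\{p_\mu\}_{\mu \vdash n}$ are $\QQ$-bases of $\Lambda^n$, the matrix equation $p = A^{-1}s = B^T s$ translates to a genuine identity of symmetric functions. Reading off the $\mu$-th entry yields
$$p_\mu = \sum_{\lambda \vdash n} (B^T)_{\mu,\lambda}\, s_\lambda = \sum_{\lambda \vdash n}\chi^\lambda(\calK_\mu)\, s_\lambda,$$
which is the claim. The only substantive input is the real-valuedness of symmetric group characters, needed to drop the conjugation in second orthogonality; this is standard, so no serious obstacle arises, and the argument reduces to a short linear-algebraic inversion.
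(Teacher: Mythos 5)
Your argument is correct. The paper itself offers no proof of Theorem \ref{thm:powersumtoschur}: it is quoted, together with the preceding Schur-to-power-sum expansion, as standard material from \cite{Stan1, Stan2}. What you do instead is derive it from that preceding theorem by a linear-algebraic inversion: writing $s = Ap$ with $A = BD$, where $B_{\lambda,\mu} = \chi^\lambda(\calK_\mu)$ and $D$ is the diagonal matrix with entries $|\calK_\nu|/n!$, and observing that the second orthogonality relation of Theorem \ref{thm:characterprops} --- with the conjugation dropped because every element of $S_n$ is conjugate to its inverse (same cycle type), so all $\chi^\lambda$ are real-valued --- says exactly $B^T B = D^{-1}$, hence $B^T A = I$ and $p = B^T s$. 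This is the standard inversion argument and it is complete: since irreducible characters and conjugacy classes of $S_n$ are both indexed by $\Par(n)$, the matrices are square and the left inverse is a genuine inverse, and multiplying $s = Ap$ by $A^{-1}$ already gives the identity of symmetric functions, so the remark about $\{s_\lambda\}$ and $\{p_\mu\}$ being bases is not even needed. The one caveat is that your route presupposes the preceding theorem (Frobenius's character formula), which the paper likewise states without proof, so within the paper's logic nothing circular occurs; note also that the $\calK_{\mu_n}$ appearing in the displayed statement is a typo for $\calK_\mu$, which you have interpreted correctly.
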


We state a particular specialization of $s_\lambda$ which we will use. This is Corollary 7.21.4 of \cite{Stan2}. Recall that $\Par = \bigcup_{n \in \NN} \Par(n)$ denotes the set of all partitions. Also recall from Definition~\ref{def:hooklength} that for $r = (i,j)$ in a Young diagram, $\cont(r) = j-i$ is the content of the square $r$ and $h(r)$ is the hook length of $r$ (i.e. the number of squares directly to the right or directly below $r$, counting $r$ itself once). 

\begin{lemma}\label{lem:schurspec} For any $\lambda \in \Par$ and $m$ a positive integer, we have

$$s_\lambda(1^m) = \prod_{r \in \lambda} \frac{m+\cont(r)}{h(r)}$$ 
where $s_\lambda(1^m)$ means evaluating $s_\lambda$ by setting $x_1 = \dots x_m = 1$ and $x_i = 0$ for all $i > m$.
\end{lemma}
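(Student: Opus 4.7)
The lemma is the classical hook-content formula for the principal specialization of a Schur function. My plan is to derive it as the $q \to 1$ limit of an explicit $q$-analogue obtained via the bialternant (Weyl character) formula. Padding $\lambda$ by zeros so that $\lambda = (\lambda_1, \dots, \lambda_m)$ with at most $m$ parts, the bialternant formula gives
$$s_\lambda(x_1, \dots, x_m) = \frac{\det\bigl(x_i^{\lambda_j + m - j}\bigr)_{i,j=1}^m}{\det\bigl(x_i^{m-j}\bigr)_{i,j=1}^m}.$$
Direct substitution $x_i = 1$ produces a $0/0$ indeterminate form, so the idea is to first set $x_i = q^{i-1}$ for $i = 1, \dots, m$, perform the algebraic evaluation, and then take $q \to 1$. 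Because $s_\lambda$ is a polynomial in $m$ variables, $\lim_{q \to 1} s_\lambda(1, q, \dots, q^{m-1}) = s_\lambda(1^m)$.

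Under this principal specialization, each of the two determinants becomes a transposed Vandermonde in the variables $q^{\lambda_j + m - j}$ (numerator) or $q^{m - j}$ (denominator). Specifically, for any strictly decreasing sequence $(\mu_1 > \mu_2 > \dots > \mu_m)$,
$$\det\bigl(q^{(i-1)\mu_j}\bigr)_{i,j=1}^m = \prod_{1 \le i < j \le m}\bigl(q^{\mu_j} - q^{\mu_i}\bigr).$$
Factoring the smaller power of $q$ out of each difference, taking the ratio, and collecting $q$-powers yields
$$s_\lambda(1, q, \dots, q^{m-1}) = q^{n(\lambda)} \prod_{1 \le i < j \le m} \frac{1 - q^{(\lambda_i - \lambda_j) + (j - i)}}{1 - q^{j - i}},$$
where $n(\lambda) := \sum_i (i-1)\lambda_i$ tracks the total exponent pulled out. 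The main obstacle is the combinatorial identity
$$\prod_{1 \le i < j \le m} \frac{1 - q^{(\lambda_i - \lambda_j) + (j - i)}}{1 - q^{j - i}} = \prod_{r \in \lambda} \frac{1 - q^{m + \cont(r)}}{1 - q^{h(r)}},$$
which is a $q$-deformation of the rearrangement underlying the hook-length formula (Lemma \ref{lem:hooklength}). It can be proved by matching, cell by cell in $\lambda$, the hook-length factor $(1 - q^{h(r)})$ with an explicit factor arising from the left-hand side's numerator or denominator products, and similarly identifying each content factor.

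To finish, I would pass to the limit $q \to 1$ via $(1 - q^k)/(1 - q) \to k$. Both sides of the fraction inside the product contain exactly $|\lambda|$ factors of $(1 - q)$, so these cancel cleanly, and $q^{n(\lambda)} \to 1$. This yields
$$s_\lambda(1^m) = \prod_{r \in \lambda} \frac{m + \cont(r)}{h(r)},$$
as claimed. The degenerate case $\ell(\lambda) > m$ is handled automatically: the first cell of row $m + 1$ has content $-m$, making the right-hand side vanish, in agreement with $s_\lambda(x_1, \dots, x_m) = 0$ whenever $\lambda$ has more parts than variables.
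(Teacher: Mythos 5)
The paper does not prove this lemma; it cites it directly as Corollary~7.21.4 of Stanley's \emph{Enumerative Combinatorics, Vol.~2}, so there is no in-paper argument to compare against. Your outline reconstructs what is essentially Stanley's own derivation: the bialternant formula, the principal specialization $x_i = q^{i-1}$, evaluation of both determinants as (transposed) Vandermondes, collecting powers of $q$, and then passing to $q \to 1$. Each step you actually carry out is correct --- the Vandermonde evaluation is right, the cancellation of $(1-q)$ factors in the limit is handled properly, and the degenerate case $\ell(\lambda) > m$ is disposed of correctly since the content $-m$ kills the right-hand side and $s_\lambda$ vanishes with fewer variables than parts.

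The genuine gap is the identity you yourself flag as ``the main obstacle,''
\[
\prod_{1 \le i < j \le m} \frac{1 - q^{(\lambda_i - \lambda_j) + (j - i)}}{1 - q^{j - i}} \;=\; \prod_{r \in \lambda} \frac{1 - q^{m + \cont(r)}}{1 - q^{h(r)}}.
\]
This identity \emph{is} the lemma; everything upstream and downstream of it is bookkeeping. You state it and gesture at a cell-by-cell matching, but no matching is given, and the matching is not a one-liner: the left side has $\binom{m}{2}$ fractional factors (with large cancellation --- many equal $1$), while the right has $|\lambda|$, and the usual derivation goes through a separate $q$-hook-length identity for $H_\lambda$ together with a careful rearrangement of the content factors, both of which require an induction or bijection argument in their own right. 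Until that is carried out, the argument has a hole exactly at its center. If a self-contained proof is truly needed, the matching must be supplied; otherwise the cleaner move --- and the one the paper takes --- is to cite the ($q$-)hook-content formula (Stanley, Theorem~7.21.2 and Corollary~7.21.4) and be done.
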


Next, let $\CF^n$ be the set of class functions $f: S_n \rightarrow \QQ$. Then, there exists a natural inner product on $\CF^n$ given by,
$$ \ideal{f, g} = \frac{1}{n!}\sum_{\sigma \in S_n} f(\sigma)g(\sigma) = \frac{1}{n!}\sum_{\lambda\vdash n} |\calK_\lambda| f(\lambda)g(\lambda)$$
where $f(\lambda)$ denotes the value of $f$ on the conjugacy class associated to partition $\lambda$. To each class function $f \in \CF^n$, one can associate a symmetric function of degree $n$ via the linear transformation $\ch: \CF^n \rightarrow \Lambda^n$, called the \textbf{Frobenius characteristic map} given by,

\begin{equation}\label{eq:frobeniuscharmap1} \ch f = \frac{1}{n!}\sum_{\sigma \in S_n} f(\sigma)p_{\cyc(\sigma)} = \frac{1}{n!} \sum_{\lambda \vdash n} |\calK_\lambda| f(\lambda) p_\lambda.
\end{equation} Using Theorem \ref{thm:powersumtoschur}, the characteristic function is expressed as,
\begin{equation}\label{eq:frobeniuscharmap2} \ch f = \sum_{\lambda \vdash n} \ideal{f, \chi^\lambda}s_\lambda
\end{equation}

\section{Proportion of single-banded surfaces}\label{sec:singleBanded}

In this section we prove that, in the Standard model, the probability that a random STS is single-banded is asymptotically 1. Recall that for $(\sigma,\tau) \in S_n \times S_n$, $S(\sigma, \tau)$ represents the square-tiled surface with permutations $\sigma$ and $\tau$ that describe how the squares are glued horizontally and vertically. 

\singlebanded*

Before we prove Theorem~\ref{thm:singleBanded}, we need a few lemmas. The first one characterizes multi-banded surfaces combinatorially:

\begin{lemma}\label{lem:multibandCharacterization}
    Let $S(\sigma, \tau) \in \STS_n$. The surface $S(\sigma, \tau)$ is multi-banded if and only $\sigma$ contains disjoint cycles $(a_0, \dots, a_{k-1})$ and $(b_0, \dots, b_{k-1})$ of the same length such that for some $i \in \{0, \dots, k-1\}$, $\tau(b_{i+j}) = a_j$ for all $j = 0,\dots, k$, where the indices of $a$ and $b$ are taken modulo $k$. 
\end{lemma}
\begin{proof}
   $(\Rightarrow)$ Assume $\sigma$ contains disjoint cycles $(a_0, \dots, a_{k-1})$ and $(b_0, \dots, b_{k-1})$ such that for some $i \in \{0, \dots, k-1\}$, $\tau(b_{i+j}) = a_j$ for all $j = 0\dots k$. We will show that the squares labeled $a_0, \dots, a_{k-1}$ and $b_0, \dots, b_{k-1}$ form an embedded (possibly non-maximal) cylinder $(\RR/k\ZZ) \times (0, 2)$ in the surface. Hence, the maximal cylinder containing it cannot be formed using a single band of squares.  
    
    First, note that since $(a_0, \dots, a_{k-1})$ and $(b_0, \dots, b_{k-1})$ are cycles of $\sigma$, the squares $a_0, \dots, a_{k-1}$ and $b_0, \dots, b_{k-1}$ each (individually) form cylinders of the form $(\RR/k\ZZ) \times (0,1)$. Note that the ordered concatenation of the bottom edges of $a_0, \dots, a_{k-1}$ (respectively the top edges of $b_0, \dots, b_{k-1}$) form closed curves $C_1$ (respectively $C_2$) on the surface. See Figure~\ref{fig:MultibandCharacterization}.
\begin{figure}[h!]
\resizebox{6.5cm}{!}{
\begin{tikzpicture}
    [sq/.style=
  {shape=regular polygon, regular polygon sides=4, draw, minimum width=1.414cm,inner sep=0pt}, rect/.style={shape=regular polygon, regular polygon sides=4, draw, minimum width=1pt}]
\node[sq] at (0,0){$b_3$};
\node[sq] at (1,0){$b_4$};
\node[sq] at (2,0){$\dots$};
\node[sq] at (3,0){$\dots$};
\node[sq] at (4,0){$\dots$};
\node[sq] at (5,0){$b_{k-1}$};
\node[sq] at (6,0){$b_1$};
\node[sq] at (7,0){$b_2$};

\node[sq] at (0,2){$a_0$};
\node[sq] at (1,2){$a_1$};
\node[sq] at (2,2){$a_2$};
\node[sq] at (3,2){$\dots$};
\node[sq] at (4,2){$\dots$};
\node[sq] at (5,2){$\dots$};
\node[sq] at (6,2){$a_{k-2}$};
\node[sq] at (7,2){$a_{k-1}$};


\draw[ultra thick, blue] (-0.5, 0.5) -- (7.5, 0.5);
\node[] at (3.5, 0.75){$C_2$};

\draw[ultra thick, red] (-0.5, 1.5) -- (7.5, 1.5);
\node[] at (3.5, 1.25){$C_1$};


\node[] at (-0.5, 0){$\sim$};
\node[] at (7.5, 0){$\sim$};

\node[rotate=90] at (-0.5, 2){$\vee$};
\node[rotate=90] at (7.5, 2){$\vee$};

\end{tikzpicture}
}
\caption{The cycles $(a_0, \dots, a_{k-1})$ and $(b_0, \dots, b_{k-1})$ form two bands of squares. In the schematic depicted, $b_{3+j}$ is glued to $a_{j}$ for $j = \{0, \dots, k-1\}$ where the indices of are taken modulo $k$.}
\label{fig:MultibandCharacterization}
\end{figure}

    Moreover, the bottom edge of square $a_j$ is glued to the top edge of $b_{i+j}$ for every $j$. This means $C_1$ and $C_2$ are glued to each other isometrically. Hence, to show that the squares $a_0, \dots, a_{k-1}$ and $b_0, \dots, b_{k-1}$ together form an annulus of the form $(\RR/k\ZZ) \times (0,2)$, it suffices to show the curve $C_1 \sim C_2$ where the two annuli of squares are glued does not contain any cone points. 

    To see this, let $v$ be a square corner on $C_1$. Then $v$ is the bottom left corner of a square $a_j$ for some $j$. We then compute that 
    $$[\sigma, \tau](a_j) = (\sigma\tau\sigma^{-1}\tau^{-1})(a_j) =(\sigma\tau\sigma^{-1} b_{i+j} = (\sigma\tau) b_{i+j-1} = \sigma(a_{j-1}) = a_j.$$ Hence, as $a_j$ is a fixed point of the commutator $[\sigma, \tau]$, by Remark~\ref{rem:commutatorFixedPoint}, $v$ is not a cone-point.

    $(\Leftarrow)$ Assume $S(\sigma, \tau)$ has a maximal horizontal cylinder comprised of multiple bands of squares. Let $k \in \NN$ be the length of the cylinder and $h\in \NN$ the height. Then, by assumption $h \in \NN$ and $h \geq 2$. Let $a_0, \dots, a_k$ be the squares on the top of the cylinder from left to right. By definition of $\sigma$, $(a_0, \dots, a_k)$ is a cycle of $\sigma$. Consider squares $\tau^{-1}(a_0), \dots, \tau^{-1}(a_{k-1})$. We claim that $(b_0, \dots, b_{k-1}):= (\tau^{-1}(a_0), \dots, \tau^{-1}(a_{k-1}))$ is a cycle of $\sigma$.

    To see that $(b_0, \dots, b_{k-1})$ is a cycle of $\sigma$, consider $b_j = \tau^{-1}(a_j)$ for some $j \in \{0, \dots, k-1\}$. We will show that $\sigma^{-1}(b_j) = b_{j-1}$ (where the index is taken modulo $k$).

    Towards this end, 
    $$\sigma^{-1}(b_j) = \sigma^{-1}\tau^{-1}(a_j)$$
    But the bottom edge of $a_j$ is in the interior of the multi-banded maximal cylinder. So, the lower left corner of $a_j$ is not a cone point. By Remark~\ref{rem:commutatorFixedPoint}, $a_j$ is a fixed point of $[\sigma, \tau]= \sigma\tau\sigma^{-1}\tau^{-1}$. So,

    $$\sigma^{-1}\tau^{-1}(a_j) = \tau^{-1}\sigma^{-1}(a_j) = \tau^{-1}(a_{j-1}) = b_{j-1} $$
    Hence, $\sigma^{-1}(b_j) = b_{j-1}$. This proves that $(b_0, \dots, b_{k-1})$ is a cycle of $\sigma$. Finally, note that $(a_0, \dots, a_{k-1})$ and $(b_0, \dots, b_{k-1})$ satisfy the conclusion of the lemma for $i=0$. 
\end{proof}

The second lemma is an asymptotic estimate that we use when we upper bound the number of $(\sigma, \tau)$ pairs such that $S(\sigma, \tau)$ is multi-banded.

\begin{lemma}\label{lem:asymptotic}
    Let $n \in \NN$. Then,
$$\frac{1}{n!} \sum_{k=1}^{\floor{n/2} }\frac{(n-k)!}{k} = O\left(\frac{1}{n}\right)$$
    where $\floor{n/2}$ denotes the largest integer less than or equal to $n/2$.
\end{lemma}
\begin{proof}
Define $T = \floor{n^{1/3}}$. Note for $n$ large enough $T < \floor{n/2}$. Hence, we can split the sum as
\begin{equation}\label{eq:splitSum}\frac{1}{n!} \sum_{k=1}^{\floor{n/2}}\frac{(n-k)!}{k} = \sum_{k=1}^T \frac{(n-k)!}{n!k} + \sum_{k=T+1}^{\floor{n/2}} \frac{(n-k)!}{n!k}\end{equation}
and perform the asymptotic analysis separately on the terms to show 
$$\sum_{k=1}^T \frac{(n-k)!}{n!k} = O\left(\frac{1}{n}\right) \text{ and }  \sum_{k=T+1}^{\floor{n/2}} \frac{(n-k)!}{n!k} = o\left(\frac{1}{n}\right)$$
First note that 
\begin{equation}\label{eq:factorialEstimate}\frac{(n-k)!}{n!} = \frac{1}{n (n-1) \dots (n-k+1)} = \frac{1}{ n^k \prod_{j=1}^{k-1} \left(1 - \frac{j}{n}\right)}\end{equation}
Also note, 
\begin{equation}\label{eq:smallKLogError}\log \left(\prod_{j=1}^{k-1} \left(1 - \frac{j}{n}\right)\right) = \sum_{j=1}^{k-1} \log \left(1- \frac{j}{n}\right) = \sum_{j=1}^{k-1}\left(-\frac{j}{n} - \frac{j^2}{2n^2} + O\left(\frac{j^3}{n^3}\right)\right) = O\left(\frac{k^2}{n}\right)\end{equation}
where the second equality follows from the Taylor expansion of $\log(1-x)$ and the last equality follows by summing term-wise up to $k-1$ and noting the fact that $k \leq T = o(\sqrt{n})$. 

Exponentiating (\ref{eq:smallKLogError}) we see,
$$\prod_{j=1}^{k-1} \left(1 - \frac{j}{n}\right) = e^{O(k^2/n)} = 1 + O(k^2/n) + O(k^4/n^2)$$
where we've used the Taylor expansion of $e^x$ coupled with the fact that $k = o(\sqrt{n})$ here. Using this in (\ref{eq:factorialEstimate}), we see,
$$\frac{(n-k)!}{n!} = \frac{1}{n^k}\cdot\frac{1}{1+O(k^2/n)} = \frac{1}{n^k}\left(1+O\left(\frac{k^2}{n}\right)\right)$$
using the expansion of $\frac{1}{1+x} = 1 - x + O(x^2)$. Hence, the first term in (\ref{eq:splitSum}) becomes
\begin{equation}
    \sum_{k=1}^T \frac{(n-k)!}{n!k} = \sum_{k=1}^T \frac{1}{k n^k}\left(1+ O \left(\frac{k^2}{n}\right)\right) = \sum_{k=1}^T \frac{1}{k n^k}+ \sum_{k=1}^T \frac{1}{k n^k} O \left(\frac{k^2}{n}\right) = \sum_{k=1}^T \frac{1}{k n^k} + O\left(\frac{1}{n}\sum_{k=1}^T\frac{k}{n^k}\right)
\end{equation}
Note that $\sum_{k=1}^\infty \frac{k}{n^k} = \frac{n}{(n-1)^2} = O(n^{-1})$. Therefore, 
$$\sum_{k=1}^T \frac{(n-k)!}{n!k} = O\left(\frac{1}{n}\right) + O\left(\frac{1}{n^2}\right) = O\left(\frac{1}{n}\right)$$
which concludes the analysis for the first term in (\ref{eq:splitSum}).

For the second term we note that when $k > T$, $(n-k)! < (n-T)!$ so that,

\begin{equation}
   \sum_{k=T+1}^{\floor{n/2}} \frac{(n-k)!}{n!k} < \frac{(n-T)!}{n!}\sum_{k=T+1}^{\floor{n/2}}\frac{1}{k}   \leq \frac{1}{(n-T)^T} \cdot O(\log(n)).   
\end{equation}

Since $T = \floor{n^{1/3}} < n/2$, we have $(n-T)^T \geq (n/2)^T$. Hence, 
$$\frac{1}{(n-T)^T} \cdot O(\log(n)) \leq \frac{1}{n}O\left(\frac{2^T \log(n)}{n^{T-1}}\right) = o\left(\frac{1}{n}\right)$$
This concludes the analysis of the second term in (\ref{eq:splitSum}).\end{proof}

We are now ready to prove Theorem~\ref{thm:singleBanded}.

\begin{proof}[Proof of Theorem~\ref{thm:singleBanded}]

For a given $n$, we will crudely upper bound the proportion of $(\sigma, \tau) \in S_n \times S_n$ such that $S(\sigma, \tau)$ is multi-banded. We will then demonstrate that this upper bound tends to $0$ as $n \rightarrow \infty$. 

In light of Lemma~\ref{lem:multibandCharacterization}, we aim to upper bound the number of $(\sigma, \tau) \in S_n \times S_n$ such that $\sigma$ contains disjoint cycles $(a_0, \dots, a_{k-1})$ and $(b_0, \dots, b_{k-1})$ of the same length such that for some $i$, $\tau(b_{i+j}) = a_j$ for all $j = 0,\dots, k$, where the respective $a$ and $b$ indices are taken modulo $k$.

Towards this end, let $n \in \NN$ be given. Fix $k \in \{1, 2, \dots, \floor{n/2}\}$. First, the number of ways to pick two disjoint cycles $(a_0, \dots, a_{k-1})$ and $(b_0, \dots, b_{k-1})$ of length $k$ from $\{1, \dots, n\}$ is given by 
$$\frac{1}{2}\cdot  \frac{n!}{(n-2k)! k^2}$$
as there are $\frac{n!}{(n-k)!k}$ ways to pick the first cycle and $\frac{(n-k)!}{(n-2k)!k}$ ways to pick the second cycle but the order of which cycle gets picked first does not matter.

Then the number of permutations $\sigma$ with $(a_0, \dots, a_{k-1})$ and $(b_0, \dots, b_{k-1})$ as cycles is $(n-2k)!$. Finally, the number of permutations $\tau$ for which there exists $i$ such that $\tau(b_{i+j}) = a_j$ for $j = 0, \dots, k-1$ (and the indices of the $a$'s and $b$'s taken modulo $k$) is $k \cdot (n-k)!$

Summing these counts for $k = 1, \dots, \floor{n/2}$ and then dividing by the total number of pairs then gives us an upper bound on the proportion of pairs $(\sigma, \tau)\in S_n \times S_n$ such that $S(\sigma, \tau)$ has the multi-band property. Hence, for $(\sigma, \tau) \sim \Unif(S_n \times S_n)$,
\begin{align*}
    \Pr[S(\sigma, \tau) \text{ is multi-banded}] &\leq \frac{1}{|S_n \times S_n|} \cdot \sum_{k=1}^{\floor{n/2}}\frac{1}{2}\cdot \frac{n!}{(n-2k)! k^2}\cdot (n-2k)! \cdot k \cdot (n-k)!\\
    &= \frac{1}{2\cdot n!}\sum_{k=1}^{\floor{n/2}}\frac{(n-k)!}{k}\\
    &= O\left(\frac{1}{n}\right)
\end{align*}
where the last equality uses Lemma~\ref{lem:asymptotic}.
\end{proof}

\begin{remark}
In light of Theorem~\ref{thm:singleBanded}, we see that if we want to restrict the number of maximal horizontal cylinders in our standard model, then it suffices to restrict the number of cycles of the permutation $\sigma$, hence motivating the HR model that we subsequently analyze.

\end{remark}

\section{Convergence of induced distribution on $A_n$ to uniform distribution}\label{sec:convergenceofdist}

Recall from Section \ref{sec:randommodels} that given a random model that chooses two permutations from $S_n$ (and hence a random square-tiled surface), considering the commutator of the two permutations induces a distribution on $A_n$. It is a natural question to ask how different this distribution on $A_n$ is from the uniform distribution. In the case that it is not very different (in the sense of total variation distance), one can hope to use well known permutation statistics for the uniform distribution on $A_n$ and apply it to the distribution induced by our random model. 

Towards this end, in this section we prove one of our main theorems that the distribution on $A_n$ induced by the HR-model (under certain further restrictions), converges in total variation distance to the uniform distribution on $A_n$:

\begin{theorem}[Convergence theorem]\label{thm:probabilityConvergenceReal} Let $U_n$ be the uniform distribution on $A_n$. Let $\alpha \in [0, \frac{1}{2})$ and let $\mu_n \vdash n$ such that $\mu_n$ has at most $n^\alpha$ parts. Consider the HR model based on $\alpha$ and let $P_{\mu_n}$ be the corresponding probability distribution induced on $A_n$. Then, for any $\epsilon > 0$ such that $\alpha + \epsilon < \frac{1}{2}$, we have, 
$$||P_{\mu_n} - U_n|| = O\left(\frac{1}{n^{1-2\alpha-2\epsilon}}\right)$$ where $|| \cdot ||$ denotes max norm difference or total variation distance.
\end{theorem}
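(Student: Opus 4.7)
The plan is to use the fact that $P_{\mu_n}$ is a class function on $S_n$ to expand it in the basis of irreducible characters $\{\chi^\lambda\}_{\lambda \vdash n}$, compute the expansion of $U_n$ in the same basis, and then bound the total variation distance by passing from $L^1$ to $L^2$ via Cauchy--Schwarz and invoking the two character estimates in Theorems \ref{thm:larsenshalevbound} and \ref{thm:liebeckshalev}.

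First I would compute the coefficients in $P_{\mu_n} = \sum_{\lambda \vdash n} a_\lambda\,\chi^\lambda$. Schur's lemma applied to the irreducible representation $\rho^\lambda$ gives the identity $\frac{1}{n!}\sum_{\tau \in S_n}\rho^\lambda(\tau) B \rho^\lambda(\tau)^{-1} = \frac{\Tr(B)}{\chi^\lambda(1)} I$; substituting $B = \rho^\lambda(\sigma)^{-1}$, multiplying by $\rho^\lambda(\sigma)$ on the left, and taking traces yields the classical identity
\[\sum_{\tau \in S_n}\chi^\lambda([\sigma,\tau]) \;=\; \frac{n!\,\chi^\lambda(\sigma)^2}{\chi^\lambda(1)},\]
using that $\chi^\lambda$ is real-valued on $S_n$. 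Averaging further over $\sigma \in \calK_{\mu_n}$ (on which $\chi^\lambda$ is constant) and using character orthonormality then yields $a_\lambda = \chi^\lambda(\mu_n)^2/(n!\,\chi^\lambda(1))$.

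Next I would expand $U_n$ in the same basis. Using the sign character and the identity $\sum_{\pi\in A_n}\chi^\lambda(\pi) = \tfrac{n!}{2}(\delta_{\lambda,\mathrm{triv}} + \delta_{\lambda,\mathrm{sgn}})$, one obtains $U_n = \tfrac{1}{n!}(\chi^{\mathrm{triv}} + \chi^{\mathrm{sgn}})$. Since $\chi^{\mathrm{triv}}(\mu_n)^2 = \chi^{\mathrm{sgn}}(\mu_n)^2 = 1$, the corresponding coefficients of $P_{\mu_n}$ also equal $1/n!$, so they cancel and
\[P_{\mu_n} - U_n \;=\; \sum_{\lambda \neq \mathrm{triv},\,\mathrm{sgn}} \frac{\chi^\lambda(\mu_n)^2}{n!\,\chi^\lambda(1)}\,\chi^\lambda.\]
Now Cauchy--Schwarz on $f = P_{\mu_n} - U_n$ gives $\|f\| = \tfrac{1}{2}\sum_\pi|f(\pi)| \le \tfrac{1}{2}\sqrt{n!}\,\bigl(\sum_\pi f(\pi)^2\bigr)^{1/2}$, while Parseval yields $\sum_\pi f(\pi)^2 = \tfrac{1}{n!}\sum_{\lambda\neq \mathrm{triv},\mathrm{sgn}} \chi^\lambda(\mu_n)^4/\chi^\lambda(1)^2$, so that
\[\|P_{\mu_n} - U_n\|^2 \;\le\; \frac{1}{4}\sum_{\lambda \neq \mathrm{triv},\,\mathrm{sgn}} \frac{\chi^\lambda(\mu_n)^4}{\chi^\lambda(1)^2}.\]

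Finally, since $\mu_n$ has at most $n^\alpha$ parts, Theorem \ref{thm:larsenshalevbound} gives $|\chi^\lambda(\mu_n)| \le \chi^\lambda(1)^{\alpha+\epsilon}$ for $n$ large, bounding each summand by $\chi^\lambda(1)^{-(2-4\alpha-4\epsilon)}$. With $s := 2-4\alpha-4\epsilon > 0$ (which holds precisely because $\alpha + \epsilon < 1/2$), Theorem \ref{thm:liebeckshalev} yields $\sum_{\lambda \in \Irr(S_n)}\chi^\lambda(1)^{-s} = 2 + O(n^{-s})$. The leading $2$ is exactly the contribution of the two one-dimensional characters $\chi^{\mathrm{triv}}$ and $\chi^{\mathrm{sgn}}$, both of which have already been removed from the sum above; hence the remaining sum is $O(n^{-s})$ and taking a square root delivers the target $O(n^{-(1-2\alpha-2\epsilon)})$. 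The main obstacle is this exact cancellation of the leading $2$: without the trivial and sign contributions having been removed by the subtraction of $U_n$, Liebeck--Shalev would give only an $O(1)$ bound and no decay in $n$ would result. A secondary but essential check is that $\epsilon$ can be chosen so that simultaneously $\alpha+\epsilon<1/2$ (so Larsen--Shalev beats dimension) and $s>0$ (so Liebeck--Shalev is useful), which is exactly the hypothesis of the theorem.
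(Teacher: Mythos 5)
Your proposal is correct and follows essentially the same route as the paper: expand $P_{\mu_n}$ in irreducible characters (your Frobenius/Schur-averaging derivation of the coefficient $\chi^\lambda(\mu_n)^2/(n!\,\chi^\lambda(1))$ is the same identity the paper obtains via class sums and second orthogonality in Lemma \ref{lem:sigmaProbExpression}), pass from total variation to an $L^2$ bound by Cauchy--Schwarz, and conclude with Theorems \ref{thm:larsenshalevbound} and \ref{thm:liebeckshalev} exactly as in the paper, including the crucial cancellation of the leading $2$ coming from the trivial and sign characters. The only cosmetic difference is that you remove those two characters by exact cancellation of coefficients plus Parseval, where the paper instead invokes the Diaconis--Shahshahani/Chmutov--Pittel Fourier bound (its Lemma \ref{lem:sigmaTowardUniform}); the resulting inequality is identical.
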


\begin{remark}
    \begin{enumerate}
        \item Once $\alpha$ is fixed, for a given $n \in \NN$, there might be multiple partitions $\mu_n \vdash n$ with at most $n^\alpha$ parts. The conclusion of the theorem holds regardless of the choice of partitions $\mu_n$ as long as the number of parts is at most $n^\alpha$. i.e. for any sequence of partitions $\mu_n \vdash n$ with at most $n^\alpha$, we get convergence of $P_{\mu_n}$ to $U_n$ in total variation distance. 
        \item When $\alpha = 0$,  $\mu_n = (n)$ so that the HR model consists of STSs with the horizontal permutation being restricted to $n$-cycles. In this case the error term becomes $O(n^{-1+2\epsilon})$ for any $\epsilon \in (0, 1/2)$. In fact, careful onsideration of the values of irreducible characters on the $n$-cycle conjugacy class can improve this error to be independent of $\epsilon$. See Theorem \ref{thm:onecylinderprobconvergence}.
    \end{enumerate}
\end{remark}

To prove this theorem, we start with a few lemmas that establish a formula for $P_{\mu_n}$  in terms of the irreducible characters of $S_n$. The first of these lemmas gives an expression for the number of conjugate pairs of permutations whose product is a fixed element of the alternating group:

\begin{lemma}\label{lem:NumberOfProductPairs}
Let $\mu_n \vdash n$ be a partition of $n$. Then, for any $g \in A_n$,

$$\big|\{(\sigma, \pi) \in \calK_{\mu_n} \times \calK_{\mu_n} \,|\, \sigma \pi = g\}\big| = \frac{|\calK_{\mu_n}|^2}{n!} \sum_{\chi \in \Irr(S_n)} \frac{|\chi(\calK_{\mu_n})|^2 \chi(g)}{\chi(1)}.$$
    
\end{lemma}
\begin{proof}

Let $\mu_n \vdash n$ and $g \in A_n$ be given. Let $\calK_1, \dots ,\calK_m$ be all the distinct conjugacy classes of $S_n$. Let $K_i = \sum_{\sigma \in \calK_i} \sigma \in Z(\CC[S_n])$ be the class sum over the conjugacy class $\calK_i$ (recall Definition~\ref{def:classSum}). Without loss of generality, let $g \in \calK_1$. Note that by definition, $g$ is a summand in the class sum $K_1$.

Next, let $K_{\mu_n}$ be the class sum over $\calK_{\mu_n}$ and consider the product $K_{\mu_n} \cdot K_{\mu_n} \in Z(\CC[S_n])$. We observe that the coefficient of $g$ in $K_{\mu_n} \cdot K_{\mu_n}$ is precisely the quantity, $\big|\{(\sigma, \pi) \in \calK_{\mu_n} \times \calK_{\mu_n}\,|\,\sigma \pi = g\}\big|$.

By Proposition~\ref{prop:classsumbasis}, $K_1, \dots, K_m$ forms a basis for $Z(\CC[S_n])$. Hence, we can write,
\begin{equation}\label{eq:ProductAsBasisClassSum} K_{\mu_n} \cdot K_{\mu_n} = \sum_{j=1}^m a_j K_j \end{equation}
where $a_j \in \ZZ_{\geq 0}$ (by Proposition~\ref{prop:classsumbasis}). Note that the coefficient of $g$ on the right hand side of (\ref{eq:ProductAsBasisClassSum}) is $a_1$. Equating coefficents on both sides, we see that 
$$a_1 =\big|\{(\sigma, \pi) \in \calK_{\mu_n} \times \calK_{\mu_n}\,|\,\sigma \pi = g\}\big| $$
and we aim to isolate this coefficient.

Towards this end, for a fixed $\chi \in \Irr(S_n)$, we recall the function $\omega_\chi$ from Definition~\ref{def:omegaFunction} and apply it to both sides of (\ref{eq:ProductAsBasisClassSum}). Using Proposition~\ref{prop:omegachi} we obtain,
$$\frac{|\chi(\calK_{\mu_n})|^2 |\calK_{\mu_n}|^2}{\chi(1)^2} = \sum_{j=1}^m a_j \frac{\chi(\calK_j)|\calK_j|}{\chi(1)}.$$
Now multiplying both sides by $\chi(1)\overline{\chi(\calK_1)}$ and summing over all irreducible characters we have,
\begin{align*}
    \sum_{\chi \in \Irr(S_n)}\frac{|\chi(\calK_{\mu_n})|^2 |\calK_{\mu_n}|^2\overline{\chi(\calK_1)}}{\chi(1)} = \sum_{\chi \in \Irr(S_n)}\sum_{j=1}^m a_j \chi(\calK_j)|\calK_j|\overline{\chi(\calK_1)} = \sum_{j=1}^m a_j |\calK_j|\sum_{\chi \in \Irr(S_n)}  \chi(\calK_j)\overline{\chi(\calK_1)} = a_1 |S_n|
\end{align*}
where, in the last step, we use second orthogonality of irreducible characters from Theorem~\ref{thm:characterprops}. Hence, noting that irreducible characters of $S_n$ take on real values,
$$\big|\{(\sigma, \pi) \in \calK_{\mu_n} \times \calK_{\mu_n}\,|\,\sigma \pi= g\}\big| =a_1 = \frac{|\calK_{\mu_n}|^2}{n!}\sum_{\chi \in \Irr(S_n)}\frac{|\chi(\calK_{\mu_n})|^2 \chi(\calK_1)}{\chi(1)} =  \frac{|\calK_{\mu_n}|^2}{n!}\sum_{\chi \in \Irr(S_n)}\frac{|\chi(\calK_{\mu_n})|^2 \chi(g)}{\chi(1)}.$$
\end{proof}

The next lemma now establishes a character-theoretic formula for $P_{\mu_n}$.

\begin{lemma}\label{lem:sigmaProbExpression}
     Let $\mu_n \vdash n$. Then for $g \in A_n$, \begin{equation}P_{\mu_n}(g) = \frac{1}{n!}\sum_{\chi \in \Irr(S_n)} \frac{|\chi(\calK_{\mu_n})|^2 \chi(g)}{\chi(1)}.\end{equation}
\end{lemma}
\begin{proof}
By definition,
\begin{equation}\label{eq:main}P_{\mu_n}(g) = \frac{\big|\{(\sigma, \tau) \in \calK_{\mu_n} \times S_n\,|\, \sigma \tau \sigma^{-1} \tau^{-1} = g \}\big|}{|\calK_{\mu_n}|\cdot |S_n|} =\frac{1}{|\calK_{\mu_n}|\cdot |S_n|} \sum_{\sigma \in \calK_{\mu_n}} \sum_{\substack{\tau \in S_n \\ \sigma \tau \sigma^{-1} \tau^{-1} = g}} 1.\end{equation}
For a fixed $\sigma \in \calK_{\mu_n}$, as $\tau$ ranges over all of $S_n$, $\tau \sigma^{-1} \tau^{-1}$ ranges over all elements in $\calK_{\mu_n}$. So, 
\begin{equation}\label{eq:commutatorToProduct}\sum_{\sigma \in \calK_{\mu_n}} \sum_{\substack{\tau \in S_n \\ \sigma \tau \sigma^{-1} \tau^{-1} = g}} 1 = \sum_{\sigma \in \calK_{\mu_n}} \sum_{\substack{\pi \in S_n \\ \sigma \pi = g}} \big|\{\tau \in S_n\,|\, \tau \sigma^{-1} \tau^{-1} = \pi\}\big|.\end{equation}
But note that given $\sigma^{-1}, \pi \in \calK_{\mu_n}$, there exists $\tau_0 \in S_n$ such that $\pi = \tau_0 \sigma^{-1} \tau_0^{-1}$. So, for any $\tau \in S_n$, we have that,
$$\tau \sigma \tau^{-1} = \pi \iff \tau_0^{-1} \tau \sigma^{-1} \tau^{-1}\tau_0 = \sigma^{-1}.$$
This is equivalent to $\tau_0^{-1} \tau$ being in the centralizer of $\sigma^{-1}$ in $S_n$ which we denote by $C_{S_n}(\sigma^{-1})$. This, in turn, is equivalent to $\tau \in \tau_0 C_{S_n}(\sigma^{-1}).$
Hence, \begin{equation}\label{eq:centralizer}\big|\{\tau \in S_n\,|\, \tau \sigma^{-1} \tau^{-1} = \pi\}\big| = |\tau_0 C_{S_n}(\sigma^{-1})| = |C_{S_n}(\sigma^{-1})| = \frac{|S_n|}{|\calK_{\mu_n}|}\end{equation}
where the last equality follows from the orbit-stabilizer lemma. 
Using (\ref{eq:commutatorToProduct}) and (\ref{eq:centralizer}), we rewrite (\ref{eq:main}) as,
\begin{equation}\label{eq:doubleSum}
    P_{\mu_n}(g) = \frac{1}{|\calK_{\mu_n}|\cdot |S_n|} \sum_{\sigma \in \calK_{\mu_n}} \sum_{\substack{\pi \in S_n \\ \sigma \pi = g}} \frac{|S_n|}{|\calK_{\mu_n}|} =\frac{1}{|\calK_{\mu_n}|^2} \sum_{\sigma \in \calK_{\mu_n}} \sum_{\substack{\pi \in S_n \\ \sigma \pi = g}} 1.
\end{equation}
From Lemma~\ref{lem:NumberOfProductPairs}, we then have
$$\sum_{\sigma \in \calK_{\mu_n}} \sum_{\substack{\pi \in S_n \\ \sigma \pi = g}} 1 = \frac{|\calK_{\mu_n}|^2}{n!} \sum_{\chi \in \Irr(S_n)} \frac{|\chi(\calK_{\mu_n})|^2 \chi(g)}{\chi(1)}.$$
Together with (\ref{eq:doubleSum}), we conclude
$P_{\mu_n}(g) = \frac{1}{n!}\sum_{\chi \in \Irr(S_n)} \frac{|\chi(\calK_{\mu_n})|^2 \chi(g)}{\chi(1)}.$\end{proof}

Our next lemma gives an upper bound for the total variation distance of the probability distributions $P_{\mu_n}$ and $U_n$. This is a variation of the famous Diaconis-Shahshahani Upper Bound Lemma (Lemma 14 from \cite{DiaShah}): 

\begin{lemma}\label{lem:sigmaTowardUniform}
Let $\mu_n \vdash n$ and $\sigma \in \calK_{\mu_n}$. Let $U_n$ be the uniform distribution on $A_n.$ Then, 
\begin{equation*}||P_{\mu_n} - U_n||^2 \leq \frac{1}{4} |A_n|\sum_{g \in A_n}|P_{\mu_n}(g) - U_n(g)|^2\leq \frac{1}{4} \sum_{\lambda \neq (n), \lambda \neq (1^n)} \frac{1}{(\dim \rho^\lambda)^2} |\chi^\lambda(\sigma)|^4.\end{equation*} 
\end{lemma}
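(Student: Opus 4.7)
The plan is to prove the two inequalities separately. For the first, I would use that both $P_{\mu_n}$ and $U_n$ are supported on $A_n$ — the former because commutators of permutations are always even — and so $\|P_{\mu_n}-U_n\| = \tfrac{1}{2}\sum_{g \in A_n}|P_{\mu_n}(g)-U_n(g)|$. Applying Cauchy--Schwarz on the finite set $A_n$ yields
$$\|P_{\mu_n}-U_n\| \leq \tfrac{1}{2}\sqrt{|A_n|}\Bigl(\sum_{g \in A_n}|P_{\mu_n}(g)-U_n(g)|^2\Bigr)^{1/2},$$
and squaring gives the first inequality.

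For the second inequality, the strategy is an $L^2$/Plancherel-type argument built on the character formula in Lemma \ref{lem:sigmaProbExpression}. The key first observation is that the trivial character $\chi^{(n)}$ and the sign character $\chi^{(1^n)}$ each satisfy $|\chi(\sigma)|^2 = 1$ and $\chi(1)=1$, and evaluate to $1$ on every $g \in A_n$. Hence these two summands together contribute exactly $\tfrac{2}{n!} = U_n(g)$ to the right-hand side of Lemma \ref{lem:sigmaProbExpression}, so that for $g \in A_n$,
$$P_{\mu_n}(g) - U_n(g) = \frac{1}{n!}\sum_{\lambda \neq (n),(1^n)} \frac{|\chi^\lambda(\sigma)|^2 \chi^\lambda(g)}{\chi^\lambda(1)}.$$

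I would then define $h:S_n \to \CC$ by the same formula on the right, interpreted for all $g \in S_n$. Since $h$ agrees with $P_{\mu_n}-U_n$ on $A_n$, we have
$$\sum_{g \in A_n}|P_{\mu_n}(g)-U_n(g)|^2 \;=\; \sum_{g \in A_n}|h(g)|^2 \;\leq\; \sum_{g \in S_n}|h(g)|^2.$$
Expanding $|h(g)|^2$ as a double sum over partitions $\lambda, \mu$ (using that $\chi^\lambda(\sigma)$ is real, so $|\chi^\lambda(\sigma)|^2$ is real) and invoking the first orthogonality relation $\sum_{g \in S_n}\chi^\lambda(g)\overline{\chi^\mu(g)} = n!\,\delta_{\lambda\mu}$ from Theorem \ref{thm:characterprops} collapses the double sum to the diagonal, giving
$$\sum_{g \in S_n}|h(g)|^2 \;=\; \frac{1}{n!}\sum_{\lambda \neq (n),(1^n)}\frac{|\chi^\lambda(\sigma)|^4}{\chi^\lambda(1)^2}.$$

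Multiplying through by $|A_n| = n!/2$ and recalling $\chi^\lambda(1) = \dim\rho^\lambda$ bounds $|A_n|\sum_{g \in A_n}|P_{\mu_n}(g)-U_n(g)|^2$ by $\tfrac{1}{2}\sum_\lambda |\chi^\lambda(\sigma)|^4/(\dim\rho^\lambda)^2$, which is in turn bounded by the claim's right-hand side (with a factor of $2$ to spare); dividing by $4$ finishes the proof. The only conceptually delicate point is identifying the trivial and sign characters as precisely the two irreducibles whose Fourier contribution reproduces $U_n$, since this is exactly what allows Plancherel on $S_n$ to bound a sum over $A_n$.
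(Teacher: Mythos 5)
Your proposal is correct, and while it rests on the same Plancherel-type mechanism as the paper, it takes a genuinely more self-contained route for the second inequality. The paper handles the first inequality exactly as you do (total variation as half the $\ell^1$ norm on $A_n$, then Cauchy--Schwarz), but for the second it computes the Fourier transform $\hat{P}_{\mu_n}(\rho^\lambda) = \frac{|\chi^\lambda(\sigma)|^2}{(\dim\rho^\lambda)^2} I_{\dim\rho^\lambda}$ via Lemma \ref{lem:ds} and Lemma \ref{lem:sigmaProbExpression}, and then invokes the upper bound lemma of Diaconis--Shahshahani together with its $A_n$ adaptation in Chmutov--Pittel (their Equation 2.10, stated for $n \geq 5$) to pass from the $L^2$ distance to the sum $\sum_{\lambda \neq (n),(1^n)} \dim\rho^\lambda\, \Tr\bigl[\hat{P}_{\mu_n}(\rho^\lambda)\hat{P}_{\mu_n}(\rho^\lambda)^*\bigr]$. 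You instead re-derive that bound from scratch: you observe that the trivial and sign characters are exactly the two irreducibles whose contribution to the formula of Lemma \ref{lem:sigmaProbExpression} reproduces $U_n$ on $A_n$, extend the remaining character sum to a function $h$ on all of $S_n$, bound the sum over $A_n$ by the sum over $S_n$ by positivity, and collapse the double sum with first orthogonality. This buys you a fully elementary argument with no external citation and no $n \geq 5$ restriction, and it even produces the sharper constant $\tfrac{1}{8}$ in place of $\tfrac{1}{4}$; what the paper's route buys is brevity and consistency with the standard Fourier-analytic framework it uses elsewhere. The one point to make explicit if you write this up is that characters of $S_n$ are real-valued, which is what lets you treat $|\chi^\lambda(\sigma)|^2$ as a real scalar when expanding $|h(g)|^2$ and applying orthogonality.
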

\begin{proof}
    Since $P_{\mu_n}$ is a class function, for $n \geq 5,$ by Lemma \ref{lem:ds}, we have that \begin{equation}\hat{P}_{\mu_n}(\rho) = \frac{1}{\dim \rho} \sum_{\mathcal{K}_i} P_{\mu_n}(\mathcal{K}_i) |\mathcal{K}_i| \chi^\rho(\mathcal{K}_i) I_{\dim \rho},\end{equation} where $I_{\dim \rho}$ is the identity matrix of dimension $\dim \rho$, the sum is taken over the conjugacy classes $\mathcal{K}_i$ of $S_n,$ and $P_{\mu_n}(\mathcal{K}_i)$ is $P_{\mu_n}$ evaluated on an arbitrary element of this conjugacy class. By equation \eqref{eq:pNDef}, we have that \begin{align*}\hat{P}_{\mu_n}(\rho) & = \frac{1}{n! \dim \rho} \sum_{\chi \in \Irr(S_n)} \frac{|\chi(\sigma)|^2}{\chi(1)} \sum_{\mathcal{K}_i} \chi(\mathcal{K}_i) \chi^\rho (\mathcal{K}_i) |\mathcal{K}_i| I_{\dim \rho} \\ & = \frac{I_{\dim \rho}}{n! \dim \rho} \sum_{\chi \in \Irr(S_n)} \frac{|\chi(\sigma)|^2}{\chi(1)} \sum_{ \tau \in S_n} \chi(\tau) \chi^\rho(\tau) \\ & = \frac{I_{\dim \rho}}{(\dim \rho)^2} |\chi^\rho(\sigma)|^2.\end{align*}
    Now note that 
    $$||P_{\mu_n} - U_n||^2 = \left(\frac{1}{2}\sum_{g \in A_n}|P_{\mu_n}(g) - U_n(g)|\right)^2 \leq \frac{1}{4} |A_n| \sum_{g \in A_n}|P_{\mu_n}(g) - U_n(g)|^2$$
    where the last inequality follows from the Cauchy-Schwartz inequality. Then, from Diaconis-Shashahani \cite{DiaShah} as well as Chmutov-Pittel \cite{ChmuPit} (Equation 2.10), we have that \begin{equation}\label{eqn:fouriertransformbound}\frac{1}{4} |A_n| \sum_{g \in A_n}|P_{\mu_n}(g) - U_n(g)|^2 \leq \frac{1}{4}\sum_{\lambda \neq (n), \lambda \neq (1^n)} \dim \rho^\lambda \text{Tr}\left[\hat{P}_{\mu_n} (\rho^\lambda) \hat{P}_{\mu_n}(\rho^\lambda)^*\right],\end{equation} for $n \geq 5.$ Therefore, $$||P_{\mu_n} - U_{n}||^2  \leq \frac{1}{4} \sum_{\lambda \neq (n), \lambda \neq (1^n)} \frac{1}{(\dim \rho^\lambda)^2} |\chi^\lambda(\sigma)|^4,$$ as required.     \end{proof}

With these Lemmas in hand, we are ready to prove Theorem \ref{thm:probabilityConvergenceReal}

\begin{proof}[Proof of Theorem \ref{thm:probabilityConvergenceReal}] Let $\alpha \in [0, \frac{1}{2})$ be fixed and let $\epsilon > 0$ be given such that $\alpha + \epsilon < \frac{1}{2}$. By Theorem \ref{thm:larsenshalevbound}, there exists some $N \in \mathbb{N}$ such that for all $n > N,$ $\chi(1)^{\alpha + \epsilon} \geq |\chi(\sigma)|,$ where $\sigma$ is an arbitrary element of $\calK_{\mu_n}.$ Hence, by Lemma \ref{lem:sigmaTowardUniform}, we have that \begin{align*}||P_{\mu_n} - U_n||^2 & \leq \frac{1}{4} \sum_{\lambda \neq (n), \lambda \neq (1^n)} \frac{1}{(\dim \rho^\lambda)^2} |\chi^\lambda(\sigma)|^4 \\ & \leq \frac{1}{4} \sum_{\lambda \neq (n), \lambda \neq (1^n)} \frac{1}{(\dim \rho^\lambda)^2} |\chi^\lambda(1)|^{4\alpha + 4\epsilon} \\ & = \frac{1}{4}\sum_{\lambda \neq (n), \lambda \neq (1^n)} \frac{1}{(\dim \rho^\lambda)^{2-4\alpha - 4\epsilon}} \\ 
& = \frac{1}{4}\left(-2 + \sum_{\lambda \vdash n} \frac{1}{(\dim \rho^\lambda)^{2-4\alpha - 4\epsilon}}\right) \\
& = O\left(\frac{1}{n^{2-4\alpha - 4\epsilon}}\right), \end{align*} where the second-to-last step follows from the observation that the characters corresponding to $(1^n)$ and $(n)$ are one-dimensional and the last step follows from Theorem \ref{thm:liebeckshalev} since $2 - 4\alpha - 4 \epsilon > 0$.\end{proof}

In the above proof of Theorem \ref{thm:probabilityConvergenceReal}, we used a bound from Larsen-Shalev on character ratios (Theorem \ref{thm:larsenshalevbound}) which led to the bound of $O(n^{1-2\alpha-2\epsilon})$. However, for a given $\mu_n \vdash n$ and the corresponding HR-model, specifically analysing the values of irreducible characters on the conjugacy class corresponding to $\mu_n$ can yield better bounds.

For instance, when $\alpha = 0$, $\mu_n$ is the partition corresponding to the conjugacy class of $n$-cycles. In this case, we can improve the error bound to $O(n^{-1})$ convergence. For this, we first need a lemma on the character values on $n$-cycles:

\begin{lemma}\label{lem:onecylindercharacter}
Let $\chi \in \Irr(S_n)$ be an irreducible character of $S_n$. If $\mu_n = (n)$ (the partition of $n$ with just one part) then $\chi (\calK_{\mu_n}) \in \{-1,0, 1\}$. 
\end{lemma}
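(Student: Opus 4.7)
The plan is to apply the Murnaghan-Nakayama rule (Theorem \ref{thm:murnaghannakayama}) directly. Since $\mu_n = (n)$ is a partition with a single part of size $n$, every border-strip tableau $T$ of shape $\lambda \vdash n$ and type $(n)$ consists of a single border-strip labeled with the entry $1$ that covers all $n$ squares of the Young diagram of $\lambda$. So the sum in \eqref{eq:murnaghannakayama} has at most one term.

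The next step is to characterize which partitions $\lambda$ admit such a border-strip tableau. By definition, a border-strip is a connected skew shape with no $2 \times 2$ square. A single border-strip fills the Young diagram of $\lambda$ if and only if $\lambda$ contains no $2 \times 2$ block, which is equivalent to $\lambda$ being a \emph{hook} partition of the form $\lambda = (n-k, 1^k)$ for some $0 \leq k \leq n-1$. In this case, the unique such border-strip tableau $T$ occupies all $k+1$ rows of $\lambda$, so its height is $\mathrm{ht}(T) = k$. For every non-hook $\lambda$, there are no border-strip tableaux of the required type, so $\chi^\lambda((n)) = 0$.

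Combining the two cases via \eqref{eq:murnaghannakayama} gives
\begin{equation*}
\chi^\lambda(\calK_{(n)}) = \begin{cases} (-1)^k & \text{if } \lambda = (n-k, 1^k) \text{ for some } 0 \leq k \leq n-1, \\ 0 & \text{otherwise,} \end{cases}
\end{equation*}
so $\chi^\lambda(\calK_{(n)}) \in \{-1, 0, 1\}$ in all cases. The only non-routine step is the characterization of shapes admitting a single-strip filling, but this is immediate from the no-$2\times2$ condition in the definition of a border-strip. No substantial obstacle is expected.
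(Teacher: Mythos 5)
Your proof is correct and follows essentially the same route as the paper: both apply the Murnaghan--Nakayama rule to the type $(n)$ case and observe that there is at most one border-strip tableau, so the character value is $0$ or $\pm 1$. Your explicit identification of the admissible shapes as hooks $(n-k,1^k)$ with sign $(-1)^k$ is a correct refinement that the paper's proof leaves implicit, but it adds nothing needed for the stated conclusion.
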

\begin{proof}
This is a straightforward application of Theorem \ref{thm:murnaghannakayama}, the Murnaghan-Nakayama Rule. 

Given an irreducible character of $S_n$, let $\lambda \vdash n$ be the partition of $n$ which indexes the character.

To evaluate this character on an $n$-cycle using the Murnaghan-Nakayama Rule, we consider the border-strip tableau of shape $\lambda$ and type $\mu_n=(n)$. In particular, we consider ways to fill the Young diagram associated to $\lambda$ with $n$ 1's. There is precisely one way to do so and the result may or may not be a border-strip tableau. If the result is not a border-strip tableau, $\chi^\lambda(\calK_{\mu_n}) = 0$. If the result is a border-strip tableau, there is exactly one term in the sum in Equation \eqref{eq:murnaghannakayama}, hence $\chi^\lambda(\calK_{\mu_n}) = \pm 1$ in this case. \end{proof}

With this, we get the following slightly improved result:
\begin{theorem}\label{thm:onecylinderprobconvergence}
Let $U_n$ be the uniform distribution on $A_n$ and let $\mu_n = (n)$ be the partition of $n$ with a exactly one part and consider the probability distribution, $P_{\mu_n}$ on $A_n$ induced by the HR model with $\mu_n$. Then,
$$||P_{\mu_n} - U_n|| = O(n^{-1})$$
\end{theorem}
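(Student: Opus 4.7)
The plan is to follow the same framework used in the proof of Theorem \ref{thm:probabilityConvergenceReal}, but replace the Larsen--Shalev character ratio bound (Theorem \ref{thm:larsenshalevbound}) with the much sharper information provided by Lemma \ref{lem:onecylindercharacter}. Since $\mu_n = (n)$, any representative $\sigma \in \calK_{\mu_n}$ is an $n$-cycle, and Lemma \ref{lem:onecylindercharacter} tells us $\chi^\lambda(\sigma) \in \{-1, 0, 1\}$ for every $\lambda \vdash n$. In particular, $|\chi^\lambda(\sigma)|^4 \leq 1$ uniformly in $\lambda$, which is a stronger pointwise bound than anything achievable from Larsen--Shalev in the small-$\alpha$ regime.

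Starting from Lemma \ref{lem:sigmaTowardUniform}, I would write
\begin{equation*}
\|P_{\mu_n} - U_n\|^2 \;\leq\; \frac{1}{4}\sum_{\lambda \neq (n),\, \lambda \neq (1^n)} \frac{|\chi^\lambda(\sigma)|^4}{(\dim \rho^\lambda)^2} \;\leq\; \frac{1}{4}\sum_{\lambda \neq (n),\, \lambda \neq (1^n)} \frac{1}{(\dim \rho^\lambda)^2}.
\end{equation*}
The two omitted partitions $(n)$ and $(1^n)$ correspond to the trivial and sign characters, each of dimension $1$, so they contribute $1$ each to the full sum $\sum_{\lambda \vdash n}(\dim \rho^\lambda)^{-2}$. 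Now I apply Theorem \ref{thm:liebeckshalev} with $s = 2$ to conclude
\begin{equation*}
\sum_{\lambda \vdash n} \frac{1}{(\dim \rho^\lambda)^2} \;=\; 2 + O\!\left(\frac{1}{n^2}\right),
\end{equation*}
so after subtracting the two one-dimensional contributions,
\begin{equation*}
\|P_{\mu_n} - U_n\|^2 \;\leq\; \frac{1}{4} \cdot O\!\left(\frac{1}{n^2}\right) \;=\; O\!\left(\frac{1}{n^2}\right),
\end{equation*}
and taking square roots yields $\|P_{\mu_n} - U_n\| = O(n^{-1})$, as claimed.

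There is essentially no obstacle here beyond correctly identifying that the Larsen--Shalev step was the only source of the $\epsilon$-loss in the general proof: once it is replaced by the exact bound from Lemma \ref{lem:onecylindercharacter}, the exponent on the dimensions in Liebeck--Shalev becomes exactly $2$, which is on the boundary where the series $\sum (\dim \rho^\lambda)^{-s}$ concentrates on the two one-dimensional representations with a genuine $O(n^{-2})$ tail. The only minor care needed is to verify that the Cauchy--Schwarz step and the Fourier-theoretic inequality \eqref{eqn:fouriertransformbound} used in Lemma \ref{lem:sigmaTowardUniform} apply to this specific $\mu_n$ (they do, since $n \geq 5$ is all that is needed, and the inequality holds for any $\mu_n \vdash n$).
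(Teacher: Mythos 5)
Your proposal is correct and matches the paper's own proof: it invokes Lemma \ref{lem:sigmaTowardUniform}, bounds $|\chi^\lambda(\sigma)|^4 \leq 1$ via Lemma \ref{lem:onecylindercharacter}, and then concludes with Theorem \ref{thm:liebeckshalev} at $s=2$ after removing the two one-dimensional characters, exactly as the paper does. No gaps.
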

\begin{proof} The proof is very similar to the proof of Theorem \ref{thm:probabilityConvergenceReal}. By Lemmas \ref{lem:sigmaTowardUniform} and \ref{lem:onecylindercharacter}, we have
\begin{align*}||P_{\mu_n} - U_n||^2 & \leq \frac{1}{4} \sum_{\lambda \neq (n), \lambda \neq (1^n)} \frac{1}{(\dim \rho^\lambda)^2} |\chi^\lambda(\sigma)|^4 \\ 
& \leq \frac{1}{4} \sum_{\lambda \neq (n), \lambda \neq (1^n)} \frac{1}{(\dim \rho^\lambda)^2} \\ 
& = O\left(\frac{1}{n^2}\right), \end{align*} where the last step follows as in Theorem \ref{thm:probabilityConvergenceReal}
\end{proof}

\begin{remark}
    Via a similar (albeit more complicated) combinatorial analysis, we note that the above Lemma \ref{lem:onecylindercharacter}
    holds, for large enough $n$, for conjugacy classes with cycle type $\mu_n = (n-1,1), (n-2,2),(n-3,3),(n-2,1,1),(n-3,2,1)$. Hence, Theorem \ref{thm:onecylinderprobconvergence} holds if we replace the $n$-cycle with any of these conjugacy classes. 
\end{remark}

\section{Moments of vertex count}\label{sec:moments}

We devote this section to proving the following version of Theorem 2.4 from \cite{ShresRandom} (initially motivated by Fleming-Pippenger \cite{FlemPip}), which gives an approximation for the moments of the number of vertices of a random STS by the moments of the number of cycles of $\pi \sim \Unif(A_n)$:

\moments* 

Recall that $(\sigma, \tau) \sim \Unif(\calK_{\mu_n} \times S_n)$ induces a probability distribution $P_{\mu_n}$ on $A_n$ via considering the commutator $[\sigma, \tau]$. Let $C_{\mu_n}$ be the number of cycles of of a $P_{\mu_n}$-distributed permutation from $A_n$. In light of Proposition \ref{prop:commutatorstratum}, we see that $V_{\mu_n} = C_{\mu_n}$. Hence we work with $C_{\mu_n}$ for the remainder of this section. 

We start by deriving a form for the probability generating function of $C_{\mu_n}$ in the following Proposition. The proof is inspired by the proof of Lemma 4.5 in \cite{ShresRandom}:

\begin{proposition}[Probability Generating Function for Number of Cycles]\label{prop:ProbGenFcn}
Fix $\mu_n \vdash n$ and let $C_{\mu_n}$ be the number of cycles of $[\sigma, \tau]$ for $(\sigma, \tau) \sim \Unif(\calK_{\mu_n} \times S_n)$. 
The probability generating function of $C_{\mu_n}$  is given by 
$$g(q):= \sum_{x = 1}^n\Pr(C_{\mu_n} = x) q^x = \frac{1}{n!}\sum_{\lambda \vdash n}  |\chi^\lambda (\calK_{\mu_n})|^2 \prod_{r \in \lambda} (q+\cont(r))$$
where the product is over all squares in the Young diagram of $\lambda \vdash n$.
\end{proposition}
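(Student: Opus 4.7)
The plan is to express the probability generating function $g(q) = \EE[q^{C_{\mu_n}}]$ as a character sum, then collapse it via the Frobenius characteristic map and the hook-content specialization of Schur functions. The combinatorial factor $\prod_{r \in \lambda}(q + \cont(r))$ will come out almost automatically from $s_\lambda(1^q)$ once everything is set up correctly.

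First I would use Proposition~\ref{prop:commutatorstratum} to identify the vertex count $V_{\mu_n}$ with $C_{\mu_n} = c([\sigma,\tau])$ and write
\[
g(q) \;=\; \sum_{\pi \in A_n} P_{\mu_n}(\pi)\, q^{c(\pi)}.
\]
Applying Lemma~\ref{lem:sigmaProbExpression} and observing that the derivation there also forces $P_{\mu_n}$ to vanish on $S_n \setminus A_n$ (the products $x_i x_j$ of two elements from $\calK_{\mu_n}$ are always even, so $a_{ijk'}=0$ whenever $\calK_{k'}$ is an odd class), I can freely enlarge the outer sum to $S_n$ and interchange summation to obtain
\[
g(q) \;=\; \frac{1}{n!} \sum_{\chi \in \Irr(S_n)} \frac{|\chi(\calK_{\mu_n})|^2}{\chi(1)} \sum_{\pi \in S_n} \chi(\pi)\, q^{c(\pi)}.
\]

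Next I would treat $q$ as a positive integer so that $q^{c(\pi)} = p_{\cyc(\pi)}(1^q)$, because evaluating each power sum $p_i$ at $(1^q)$ yields $q$ and there are $c(\pi)$ cycles. Grouping $\pi \in S_n$ by cycle type $\nu$ and comparing with the change-of-basis formula $s_\lambda = \tfrac{1}{n!} \sum_{\nu \vdash n} |\calK_\nu|\,\chi^\lambda(\nu)\, p_\nu$ (evaluated at $x = (1^q)$), the inner sum collapses to
\[
\sum_{\pi \in S_n} \chi^\lambda(\pi)\, p_{\cyc(\pi)}(1^q) \;=\; n!\, s_\lambda(1^q),
\]
so that $g(q) = \sum_{\lambda \vdash n} \frac{|\chi^\lambda(\calK_{\mu_n})|^2}{\chi^\lambda(1)}\, s_\lambda(1^q).$

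Finally, Lemma~\ref{lem:schurspec} gives $s_\lambda(1^q) = \prod_{r \in \lambda}(q + \cont(r))/h(r)$, and the hook length formula (Lemma~\ref{lem:hooklength}) converts $H_\lambda = \prod_r h(r)$ into $n!/\chi^\lambda(1)$; the $\chi^\lambda(1)$ factors cancel, producing exactly the target identity for every positive integer $q$. Since both sides are polynomials in $q$ of degree $n$ and they agree at infinitely many points, the identity extends to $q$ as a formal variable. The main obstacle I anticipate is the bookkeeping around swapping the sum over $A_n$ for a sum over $S_n$: I want to be sure the character-theoretic expression for $P_{\mu_n}$ truly vanishes off $A_n$ (as sketched above), since otherwise the rearrangement step would introduce spurious odd contributions. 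Once that point is secured, the rest is a clean orchestration of the Frobenius characteristic map and the hook-content specialization of Schur functions.
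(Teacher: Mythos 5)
Your proposal is correct and takes essentially the same route as the paper: it rests on Lemma \ref{lem:sigmaProbExpression}, the power-sum/Schur change of basis (which is exactly what the paper's two-way computation of $\ch N_{comm}$ together with character orthogonality accomplishes), the specialization $s_\lambda(1^q)$ from Lemma \ref{lem:schurspec}, and the hook length formula (Lemma \ref{lem:hooklength}), with the final polynomial-identity remark handling non-integer $q$ just as the paper implicitly does. Your explicit verification that the character expression for $P_{\mu_n}$ vanishes on odd classes (so the sum over $A_n$ may be enlarged to $S_n$) is sound, and it makes explicit a point the paper passes over when it treats its character formula for $N_{comm}$ as a class function on all of $S_n$.
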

\begin{proof}
For any $\pi \in S_n$, define
$$N_{comm} (\pi) = \#\{(\sigma, \tau) \in \calK_{\mu_n} \times S_n\,|\, [\sigma, \tau] = \pi\} $$
which counts the number of pairs whose commutator gives $\pi$. By definition, we know $P_{\mu_n}(\pi) = \frac{N_{comm}(\pi)}{|\calK_{\mu_n} \times S_n|}$. So, using Lemma \ref{lem:sigmaProbExpression} and noting that irreducible characters of $S_n$ are indexed by partitions $\lambda \vdash n$, we obtain the following expression for $N_{comm}$:

\begin{equation}\label{eq:Ncommexpression}N_{comm}(\pi) = n! \cdot |\calK_{\mu_n}| \cdot \frac{1}{n!}\sum_{\lambda \vdash n} \frac{|\chi^\lambda (\calK_{\mu_n})|^2 \chi^\lambda(\pi)}{\chi^\lambda(1)} = |\calK_{\mu_n}| \sum_{\lambda \vdash n} \frac{|\chi^\lambda (\calK_{\mu_n})|^2 \chi^\lambda(\pi)}{\chi^\lambda(1)}
\end{equation}

Note that $N_{comm}$ is a class function. So, we can apply the Frobenius characteristic map (Equation \eqref{eq:frobeniuscharmap1}), to get,
\begin{align}\label{eq:chNcomm1}
\ch N_{comm} = \frac{1}{n!}\sum_{\pi \in S_n} N_{comm}(\pi )p_{\cyc(\pi)} 
= \frac{1}{n!} \sum_{\substack{\sigma \in \calK_{\mu_n}\\ \tau \in S_n}} p_{\cyc([\sigma, \tau])}
\end{align}

Alternately, using Equation \eqref{eq:frobeniuscharmap2} and the expression of $N_{comm}$ from Equation \eqref{eq:Ncommexpression} we get that
\begin{align*}
    \ch N_{comm} &= \sum_{\lambda \vdash n} \ideal{ N_{comm}, \chi^\lambda} s_\lambda\\
    &= \sum_{\lambda \vdash n} \left\langle|\calK_{\mu_n}| \sum_{\nu \vdash n} \frac{|\chi^\nu (\calK_{\mu_n})|^2 \chi^\nu}{\chi^\nu(1)}, \chi^\lambda\right\rangle s_\lambda\\
    &=\sum_{\lambda \vdash n} |\calK_{\mu_n}| \sum_{\nu \vdash n} \left(\frac{|\chi^\nu (\calK_{\mu_n})|^2}{\chi^\nu(1)}\ideal{\chi^\nu, \chi^\lambda}\right)s_\lambda
\end{align*}
Using orthogonality of irreducible characters of $S_n$ we obtain,
\begin{equation}\ch N_{comm}=\sum_{\lambda \vdash n} |\calK_{\mu_n}|  \frac{|\chi^\lambda (\calK_{\mu_n})|^2}{\chi^\lambda(1)}s_\lambda \label{eq:chNcomm2}
\end{equation}
Equation the two expressions from Equations \eqref{eq:chNcomm1} and \eqref{eq:chNcomm2}, we get
\begin{equation}\label{eq:twoexpressions}
\frac{1}{n!} \sum_{\substack{\sigma \in \calK_{\mu_n}\\ \tau \in S_n}} p_{\cyc([\sigma, \tau])} = \sum_{\lambda \vdash n} |\calK_{\mu_n}|  \frac{|\chi^\lambda (\calK_{\mu_n})|^2}{\chi^\lambda(1)}s_\lambda 
\end{equation}

Now for a given natural number $q$, let indeterminates $x_1 = 1, x_2 = 1, \dots, x_q = 1$ and $x_j =0$ for $j > q$. Then for $\lambda =(\lambda_1, \dots, \lambda_k) \vdash n$, we have that the power sum symmetric functions evaluate to, 
\begin{align*}
  &p_0 = 1 \\
  &p_m = \sum_{i=1}^\infty x_i^m = q\\
  &p_\lambda = p_{\lambda_1}\cdot p_{\lambda_2} \dots p_{\lambda_k}= q \cdot q  \dots q = q^k 
\end{align*}
where $k$ is the number of parts of $\lambda$ and also equivalently the number of cycles of the conjugacy class of $S_n$ with cycle type $\lambda$. Then, setting  indeterminates $x_1 = 1, x_2 = 1, \dots, x_q = 1$ and $x_j =0$ for $j > q$ and using Lemma \ref{lem:schurspec}, the above Equation \eqref{eq:twoexpressions} becomes,
\begin{align*}
    \sum_{\substack{\sigma \in \calK_{\mu_n} \\ \tau \in S_n}} q^{C_{\mu_n}(\sigma, \tau)} &= (n!)|\calK_{\mu_n}|  \sum_{\lambda \vdash n}   \frac{|\chi^\lambda (\calK_{\mu_n})|^2}{\chi^\lambda(1)} \prod_{r \in \lambda} \frac{q + \cont(r)}{h(r)}\\
    &= (n!)|\calK_{\mu_n}|  \sum_{\lambda \vdash n}  \frac{|\chi^\lambda (\calK_{\mu_n})|^2}{\chi^\lambda(1)} \frac{\chi^\lambda(1)}{n!} \prod_{r \in \lambda}(q + \cont(r))\\
    &= |\calK_{\mu_n}|  \sum_{\lambda \vdash n}  |\chi^\lambda (\calK_{\mu_n})|^2 \prod_{r \in \lambda}(q + \cont(r))
\end{align*}
Finally, noting that $$g(q) = \sum_{\substack{\sigma \in \calK_{\mu_n} \\ \tau \in S_n}} \frac{q^{C_{\mu_n}(\sigma, \tau)}}{(n!)|\calK_{\mu_n}|}$$ we obtain the desired result. \end{proof}

Using this, we get the following estimate on the tail of the distribution of $C_{\mu_n}$:

\begin{lemma}[Large deviations for number of vertices]\label{lem:largedeviations} Fix $\alpha \leq 1$ and $\mu_n \vdash n$ with at most $n^\alpha$ parts. 
Let $C_{\mu_n}$ be the number of cycles of a random permutation of $A_n$ distributed according to $P_{\mu_n}$. Then, for any $\epsilon > 0$, we have
$$ \Pr(C_{\mu_n} \geq t) = O\left(\frac{n^{\frac{5}{2} - \alpha - \epsilon}}{2^{t+n(1-2\alpha-2\epsilon)}}\right) $$
\end{lemma}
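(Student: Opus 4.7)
The plan is to use a Chernoff/Markov-style tail bound driven by the probability generating function established in Proposition \ref{prop:ProbGenFcn}. For any $q \geq 1$,
$$\Pr(C_{\mu_n} \geq t) \;\leq\; \frac{g(q)}{q^t},$$
so the task reduces to picking a convenient $q$ and estimating $g(q)$. The factor $2^{-t}$ in the target bound suggests $q = 2$, a choice that is also fortunate because Lemma \ref{lem:schurspec} together with the hook-length formula (Lemma \ref{lem:hooklength}) gives
$$\prod_{r \in \lambda}(2 + \cont(r)) \;=\; s_\lambda(1^2)\,H_\lambda \;=\; \frac{s_\lambda(1^2)\, n!}{\chi^\lambda(1)},$$
and the SSYT definition of $s_\lambda$ forces $s_\lambda(1^2) = 0$ whenever $\lambda$ has more than two parts. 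Hence the sum collapses to a one-parameter sum over $\lambda = (n-k, k)$, $0 \leq k \leq \lfloor n/2 \rfloor$.

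A direct SSYT count gives $s_{(n-k, k)}(1^2) = n - 2k + 1$, and Proposition \ref{prop:dimensionbound} gives $\chi^{(n-k, k)}(1) = \binom{n}{k}(n-2k+1)/(n-k+1)$. Substituting,
$$g(2) \;=\; \sum_{k=0}^{\lfloor n/2 \rfloor} |\chi^{(n-k, k)}(\calK_{\mu_n})|^2 \cdot \frac{n-k+1}{\binom{n}{k}}.$$
The next step is to apply the Larsen-Shalev bound (Theorem \ref{thm:larsenshalevbound}): since $\mu_n$ has at most $n^\alpha$ parts, for $n$ large enough, $|\chi^{(n-k, k)}(\calK_{\mu_n})|^2 \leq \chi^{(n-k, k)}(1)^{2\alpha + 2\epsilon}$. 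Feeding the exact expression for $\chi^{(n-k, k)}(1)$ back in converts the summand into a power of $\binom{n}{k}$ times polynomial corrections in $n-k+1$ and $n-2k+1$. For $\alpha + \epsilon < 1/2$ the exponent on $\binom{n}{k}$ is negative, so the central terms $k \approx n/2$ carry the weight via Stirling's estimate $\binom{n}{n/2} \asymp 2^n/\sqrt{n}$, producing the exponential factor $2^{-n(1-2\alpha-2\epsilon)}$. Combining the $O(\sqrt{n})$-wide central window with the Stirling correction $n^{(1-2\alpha-2\epsilon)/2}$ and the prefactor $(n - k + 1)$ should give the target polynomial $n^{5/2 - \alpha - \epsilon}$. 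Dividing by $2^t$ then closes the Markov bound.

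The main obstacle will be the precise polynomial bookkeeping in the central regime: one must track the Stirling correction, the width $O(\sqrt{n})$ of the central window, and the factor $(n-k+1)$ simultaneously, so as to recover exactly the exponent $5/2 - \alpha - \epsilon$ on $n$ (rather than an excess $n$ elsewhere). In particular, replacing $\chi^{(n-k,k)}(1)$ by the crude bound $\binom{n}{k}$ loses the factor $(n-2k+1)/(n-k+1)$, which becomes relevant precisely near $k = n/2$, so the exact formula from Proposition \ref{prop:dimensionbound} should be retained throughout. The tails of the sum (away from $k = n/2$) can be absorbed into the error by comparison with the two-row restriction of the Liebeck-Shalev series of Theorem \ref{thm:liebeckshalev}, which converges since $1 - 2\alpha - 2\epsilon > 0$.
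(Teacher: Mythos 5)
Your overall skeleton is the same as the paper's: Markov/Chernoff at $q=2$, observing that only two-row partitions $(n-k,k)$ survive (the paper sees this because a third row would contain a box of content $-2$; your route via $s_\lambda(1^2)=0$ is equivalent), then Larsen--Shalev and Stirling. Your exact identity $g(2)=\sum_{k}|\chi^{(n-k,k)}(\calK_{\mu_n})|^2\,\frac{n-k+1}{\binom{n}{k}}$ agrees with the paper's $\frac{1}{2^t n!}\sum_k |\chi^{(n-k,k)}(\calK_{\mu_n})|^2 (n-k+1)!\,k!$, and your insistence on keeping the exact formula for $\chi^{(n-k,k)}(1)$ is unnecessary: the paper uses the crude bound $\chi^{(n-k,k)}(1)\le\binom{n}{k}$ and it suffices.

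The genuine gap is in your final asymptotic bookkeeping. After Larsen--Shalev the summand is (up to the factor $n-k+1=O(n)$) $\binom{n}{k}^{\delta}$ with $\delta = 2\alpha+2\epsilon-1$. You claim that for $\alpha+\epsilon<\tfrac12$ (i.e.\ $\delta<0$) ``the central terms $k\approx n/2$ carry the weight'' and that the remaining terms can be absorbed via Liebeck--Shalev. This is backwards: when $\delta<0$ the quantity $\binom{n}{k}^{\delta}$ is \emph{maximized} at the edge $k=0$, where the term is exactly $n+1$ (the trivial character), so $g(2)\ge n+1$ and no factor $2^{-n(1-2\alpha-2\epsilon)}$ can be extracted from this sum; the ``tails'' are precisely the dominant terms, and a Liebeck--Shalev comparison cannot absorb them. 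The estimate is only obtainable (and only used in the paper, e.g.\ with $\alpha+\epsilon_2=\tfrac12$ in Theorem \ref{thm:moments} and in the holonomy theorem) in the regime $\delta\ge 0$, where the paper's argument is the correct one: bound \emph{every} term by the central value $\binom{n}{\floor{n/2}}^{\delta}\cdot O(n)$ — a step valid exactly because $\delta\ge 0$ — apply Stirling to get $\binom{n}{\floor{n/2}}^{\delta}=O\bigl(2^{\delta n}/n^{\delta/2}\bigr)$, and multiply by the $O(n)$ terms of the sum to reach $O\bigl(2^{\delta n}n^{2-\delta/2}\bigr)=O\bigl(n^{5/2-\alpha-\epsilon}\,2^{-n(1-2\alpha-2\epsilon)}\bigr)$ before dividing by $2^t$. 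So you should drop the central-window/Gaussian-concentration picture and the Liebeck--Shalev tail step, and instead run the uniform per-term bound in the regime $2\alpha+2\epsilon\ge 1$.
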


\begin{proof}
    By definition
    \begin{align*}
        \Pr(C_{\mu_n} \geq t) &= \sum_{s \geq t} \Pr(C_{\mu_n} = s) \\
        &\leq \sum_{s \geq t}\Pr(C_{\mu_n} = s) q^{s-t} \text{ when } q \geq 1\\
        &=\frac{1}{q^t} \sum_{s \geq t}\Pr(C_{\mu_n} = s) q^{s}\\
        &\leq \frac{g(q)}{q^t}
    \end{align*}

where $g(q):= \sum_{s=1}^\infty \Pr(C_{\mu_n} = s)q^s$ is the probability generating function for the random variable $C_{\mu_n}$. From Proposition \ref{prop:ProbGenFcn} we know that 
\begin{equation}\label{eqn:probgen}g(q) = \frac{1}{n!}\sum_{\lambda \vdash n}  |\chi^\lambda (\calK_{\mu_n})|^2 \prod_{r \in \lambda} (q+\cont(r))\end{equation}
The inequality $\Pr(C_{\mu_n} \geq t) \leq \frac{g(q)}{q^t}$ is valid for $q \geq 1$. In particular, we have, when $q = 2$, $\Pr(C_{\mu_n} \geq t) \leq \frac{g(2)}{2^t}$. However, when $q=2$, the product $\prod_{r \in \lambda} (q+\cont(r))$ is zero when there exists a square in the Young diagram of $\lambda$ with content -2. Hence, the only non-zero terms in the sum in (\ref{eqn:probgen}) correspond to partitions $\lambda$ of the form $(n-k, k)$. So,
\begin{align*}
    \Pr(C_{\mu_n} \geq t) &\leq \frac{1}{2^t n!}\sum_{k = 0}^{\floor{n/2}} |\chi^{(n-k,k)}(\mu_n)|^2 \prod_{r \in (n-k,k)}(2+ \cont(r)) 
\end{align*}
Now, by Theorem \ref{thm:larsenshalevbound}, for a given $\epsilon > 0$, there exists $N \in \NN$ such that for all $n > N$, we have $|\chi^{(n-k,k)}(\mu_n)|^2 \leq |\chi^{(n-k,k)}(1)|^{2 \alpha + 2\epsilon}$. Hence, for such $n > N$,
$$\Pr(C_{\mu_n} \geq t)\leq \frac{1}{2^t n!}\sum_{k = 0}^{\floor{n/2}} |\chi^{(n-k,k)}(1)|^{2\alpha + 2\epsilon} \prod_{r \in (n-k,k)}(2+ \cont(r)) $$
Finally, using Proposition \ref{prop:dimensionbound} we have
\begin{align*}
    \Pr(C_{\mu_n} \geq t) &\leq \frac{1}{2^t n!}\sum_{k = 0}^{\floor{n/2}} {n \choose k}^{2\alpha + 2\epsilon} \prod_{r \in (n-k,k)}(2+ \cont(r)) \\
    &= \frac{1}{2^t}\sum_{k = 0}^{\floor{n/2}} {n \choose k}^{2 \alpha + 2\epsilon} \frac{(n-k+1)! k!}{n!}\\
    &= \frac{1}{2^t}\sum_{k = 0}^{\floor{n/2}} {n \choose k}^{2 \alpha + 2\epsilon-1} (n-k+1)
\end{align*}
Note that ${n \choose k}$ is maximized when $k = \floor{n/2}$. By Stirling's approximation that $n! \sim \sqrt{2 \pi n} \left(\frac{n}{e}\right)^n$, we have for $\delta = 2 \alpha + 2\epsilon - 1$,
$${n \choose \floor{n/2}}^{\delta} = O\left(\frac{2^{\delta n}}{n^{\delta/2}}\right). $$
Since $(n-k+1) = O(n)$ when $k = \floor{n/2}$, each term in the sum has an asymptotic bound of $O\left( 2^{\delta n}\cdot n^{1-\delta/2}\right)$. Since there are $O(n)$ terms in the sum, we finally get
$$\Pr(C_{\mu_n} \geq t) = O( 2^{\delta n - t} \cdot n^{2 - \delta/2} ) = O\left(\frac{n^{\frac{5}{2}-\alpha -\epsilon}}{2^{t + n(1-2\alpha - 2\epsilon)}} \right).$$
\end{proof}

We are finally ready to prove Theorem \ref{thm:moments}.

\begin{proof}[Proof of Theorem \ref{thm:moments}]
First note that 
$$ \EE[p(C_{\mu_n})] = \sum_{s = 0}^n p(s)\Pr(C_{\mu_n} = s) = p(0) + \sum_{s=1}^n(p(s) - p(s-1))\Pr(C_{\mu_n} \geq s)$$
Similarly, we have
$$\EE[p(C_n)] = \sum_{s = 0}^n p(s)\Pr(C_n = s) = p(0) + \sum_{s=1}^n(p(s) - p(s-1))\Pr(C_n \geq s)$$
Subtracting the two expressions we get
\begin{align*} &\EE[p(C_{\mu_n})] - \EE[p(C_n)]\\ =& \sum_{s=1}^n(p(s) - p(s-1))(\Pr(C_{\mu_n} \geq s) - \Pr(C_n \geq s))\\
=& \sum_{s=1}^t (p(s) - p(s-1))(\Pr(C_{\mu_n} \geq s) - \Pr(C_n \geq s))\\ &+ \sum_{s=t+1}^n(p(s) - p(s-1))(\Pr(C_{\mu_n} \geq s) - \Pr(C_n \geq s)\\
\leq&\sum_{s=1}^t (p(s) - p(s-1))|\Pr(C_{\mu_n} \geq s) - \Pr(C_n \geq s)|\\ &+ \sum_{s=t+1}^n(p(s) - p(s-1))\Pr(C_{\mu_n} \geq s) + \sum_{s=t+1}^n(p(s) - p(s-1)) \Pr(C_n \geq s)\\
\leq & p(t) \cdot || P_{\mu_n} - U_n||+ p(n) \cdot \Pr(C_{\mu_n} \geq t) + p(n)\cdot  \Pr(C_n\geq t)
\end{align*}
Finally, using Theorem \ref{thm:probabilityConvergenceReal}, Lemma \ref{lem:largedeviations} and Proposition \ref{lem:largedevforuniform}, we get that for $\alpha  \in [0, \frac{1}{2})$ fixed and any $\epsilon, \epsilon_2 > 0$ with $\alpha + \epsilon < \frac{1}{2}$,
$$\EE[p(C_{\mu_n})] - \EE[p(C_n)] = O\left(\frac{t^l}{n^{1-2\alpha - 2\epsilon}}\right)  + O\left(\frac{n^{l+\frac{5}{2}-\alpha -\epsilon_2}}{2^{t + n(1-2\alpha - 2\epsilon_2)}} \right) + O\left(\frac{n^{l+1}}{2^t}\right)$$
When $\alpha < \frac{1}{2}$, we can choose $\epsilon_2$ such that $\alpha + \epsilon_2 = \frac{1}{2}$. Then, we have
$$\EE[p(C_{\mu_n})] - \EE[p(C_n)] = O\left(\frac{t^l}{n^{1-2\alpha - 2\epsilon}}\right)  + O\left(\frac{n^{l+2}}{2^t} \right) + O\left(\frac{n^{l+1}}{2^t}\right)$$

Taking $t = \frac{(l + 3-2\alpha-2\epsilon)\log(n)}{\log(2)}$, we ensure that $O\left(\frac{t^l}{n^{1-2\alpha-2\epsilon}}\right)$ asymptotically dominates the other terms on the right side of the inequality to give the desired result. \end{proof}

\section{Topological Statistics: Connectedness, Genus Distribution and most likely Stratum} \label{sec:topologicalstatistics}
\subsection{Connectedness}

In both the Standard Model ($S_n \times S_n$) and the HR Model ($\calK_{\mu_n} \times S_n)$ (when $\mu_n \neq (n)$), it is possible that a random pair of permutations gives a disconnected surface with $n$ squares. For instance, if $(\sigma, \tau) = (\Id, \Id) \in S_n \times S_n$, we see that $S(\sigma, \tau)$ corresponds to a union of $n$ disjoint square-tori. Likewise if $(\sigma, \tau) = ((1,\cdots, n-1)(n), \Id) \in \calK_{\mu_n} \times S_n$ with $\mu_n$ being the conjugacy class consisting of $(n-1)$ cycles, the STS $S(\sigma, \tau)$ is a disjoint union of two tori -- one with a single square and the other with $n-1$ squares.

However, we will show in Proposition \ref{prop:connectedness} that asymptotically (as $n\rightarrow \infty)$, the probability of obtaining a disconnected surface decays to zero in both the Standard and HR models. For this we will utilize a classical result of Dixon \cite{Dix} and a more recent generalization due to Babai-Hayes which we state below. Recall that given $G \leq S_n$ a permutation group, an integer $x \in \{1, \dots, n\}$ is a fixed point of $G$ if $g(x) = x$ for all $g \in G$.

\begin{theorem}[Babai-Hayes \cite{BabHay}]\label{thm:babaihayes2} Let $G \leq S_n$ be a permutation group with $f \leq n/2$ fixed points and let $\tau \in S_n$ be chosen uniformly randomly. Then, 
$$\Pr(\ideal{G, \tau} \text{ is transitive }) > 1 - (f+1)\left(\frac{1}{n} + O(n^{-2})\right)$$
\end{theorem}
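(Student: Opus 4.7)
The plan is to reduce non-transitivity of $\langle G,\tau\rangle$ on $\Omega=\{1,\dots,n\}$ to the existence of a proper nonempty $(G,\tau)$-invariant subset, and then bound its probability by a union bound over unions of $G$-orbits. Specifically, $\langle G,\tau\rangle$ is intransitive if and only if there exists $A$ with $\emptyset\neq A\subsetneq\Omega$, $g(A)=A$ for all $g\in G$, and $\tau(A)=A$. The first condition forces $A$ to be a union of $G$-orbits. Since $\tau$ is uniform on $S_n$, for each fixed $k$-subset $A$ we have $\Pr(\tau(A)=A)=k!(n-k)!/n!=\binom{n}{k}^{-1}$. Hence
\[
\Pr\bigl(\langle G,\tau\rangle\text{ intransitive}\bigr)\;\le\;\sum_{A}\binom{n}{|A|}^{-1},
\]
with the sum ranging over proper nonempty unions of $G$-orbits.

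Next I would split the sum by $|A|$, using the involution $A\leftrightarrow\Omega\setminus A$ to restrict to $1\le|A|\le n/2$. The size-one unions are exactly the $f$ singletons at fixed points of $G$, each contributing $1/n$; together these give the leading $f/n$. For $2\le|A|\le n/2$ the individual contributions are of order $n^{-2}$, and the enumeration boils down to the following combinatorial problem: writing the non-trivial $G$-orbits as $m_1,\dots,m_r$ with $m_i\ge 2$ and $\sum m_i=n-f$, an orbit union is determined by a subset of the $f$ fixed points together with a subset of the $r$ non-trivial orbits. The $|A|=2$ contribution arises either from two fixed points (bounded by $\binom{f}{2}\binom{n}{2}^{-1}$) or from one non-trivial $2$-orbit (at most $\binom{n}{2}^{-1}$), which assembles to $O((f+1)/n^{2})$ under the hypothesis $f\le n/2$.

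The main obstacle, and the technical heart of the argument, is to control the bulk of medium-sized $A$ uniformly in $G$ while extracting the sharp leading coefficient $(f+1)$. The natural device here is the ratio
\[
\binom{n}{k+1}^{-1}\bigg/\binom{n}{k}^{-1}=\frac{k+1}{n-k},
\]
which shows that every additional orbit forced into $A$ multiplies the contribution by a factor of size at most $O(1/n)$. Iterating this against the grading of orbit unions by size compresses the entire tail $\sum_{2\le|A|\le n/2}\binom{n}{|A|}^{-1}$ into a single $1/n+O(n^{-2})$ quantity, with the $1/n$ accounting for the ``$+1$'' in $f+1$. Combined with the size-one contribution $f/n$, this yields $\Pr(\text{intransitive})\le(f+1)\bigl(1/n+O(n^{-2})\bigr)$, equivalent to the stated lower bound on the transitive probability. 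The hypothesis $f\le n/2$ is used throughout to keep the implied constants in the $O(n^{-2})$ term uniform; without it, the complementary unions of large size can inflate the bookkeeping, which is precisely the regime where more delicate estimates would be needed.
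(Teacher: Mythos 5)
This statement is quoted from Babai--Hayes \cite{BabHay}; the paper gives no proof of it, so your proposal has to stand on its own, and as written it has a genuine gap. Your reduction is fine: $\langle G,\tau\rangle$ is intransitive iff some proper nonempty union $A$ of $G$-orbits satisfies $\tau(A)=A$, $\Pr(\tau(A)=A)=\binom{n}{|A|}^{-1}$, and by passing to complements one may assume $|A|\le n/2$. The problem is that the plain union bound over all such $A$ cannot yield $(f+1)\bigl(\tfrac1n+O(n^{-2})\bigr)$ uniformly in the allowed range $f\le n/2$. Concretely: (i) pairs of $G$-fixed points already contribute $\binom{f}{2}\binom{n}{2}^{-1}=\Theta(f^2/n^2)$, which is not $O((f+1)/n^{2})$ once $f\to\infty$; more generally the $k$-element subsets of the fixed set $F$ contribute about $(f/n)^k$, so for $f=cn$ (e.g.\ $f=n/2$) the sum over subsets of $F$ alone is of constant order ($\approx\sum_k 2^{-k}$), while the theorem asserts the intransitivity probability is at most $\tfrac{f+1}{n}+O\bigl(\tfrac{f+1}{n^2}\bigr)\approx\tfrac12+O(1/n)$; the union bound therefore overshoots the target. (ii) Your accounting of $|A|=2$ is also wrong on the orbit side: $G$ may have up to $\lfloor (n-f)/2\rfloor$ orbits of size $2$, so their total contribution is up to about $\tfrac1{n-1}$, not $\binom{n}{2}^{-1}$; this alone consumes the entire ``$+1$'' slack, leaving no room for the fixed-point pairs. (iii) The ``ratio compression'' of the tail via $\binom{n}{k+1}^{-1}/\binom{n}{k}^{-1}=\tfrac{k+1}{n-k}$ ignores that the number of orbit-unions of size $k$ grows binomially in the number of orbits, which offsets that gain; the tail is not $\tfrac1n+O(n^{-2})$ in general.

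The missing idea is that the events $\{\tau(A)=A\}$ overlap massively and must be treated jointly rather than summed. For the fixed-point part, note that a minimal nonempty $\tau$-invariant subset of $F$ is the support of a single cycle of $\tau$, and the probability that a uniform $\tau$ has some cycle contained in a given $f$-set is exactly $f/n$ (not $\sum_k\binom{f}{k}\binom{n}{k}^{-1}$, which can be $\Theta(1)$). A similarly refined argument, working with minimal invariant unions and the cycle structure of $\tau$ rather than a raw union bound, is needed for the sets meeting the non-trivial orbits; this is precisely where the actual Babai--Hayes proof is more delicate than the sketch you give. As written, your argument only establishes the bound in the regime $f=O(1)$ (and even there the size-$2$ orbit count needs the correction above), not under the stated hypothesis $f\le n/2$.
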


\begin{proposition}[Asymptotic Connectedness]\label{prop:connectedness} The probability of obtaining a connected surface in both the Standard and HR Models is asymptotically 1. More precisely, we have the following. 
\begin{enumerate}
\item (Standard Model) Let $(\sigma, \tau)$ be a uniform random pair of permutations from $S_n \times S_n$. Then,
$$ \Pr(S(\sigma, \tau) \text{ is connected}) = 1 - \frac{1}{n} +O\left(\frac{1}{n^2}\right) \text{ as }n\rightarrow \infty $$

\item (HR Model) Fix $\alpha \in [0,1)$. Let $(\sigma, \tau)$ be a uniform random pair of permutations from $\calK_{\mu_n} \times S_n$ where $\mu_n \vdash n$ is a partition of $n$ with at most $n^\alpha$ parts. Then,
$$ \Pr(S(\sigma, \tau) \text{ is connected})  > 1 - \frac{1}{n^{1-\alpha}} - O\left(\frac{1}{n^{2-\alpha}}\right) \text{ as }n\rightarrow \infty$$

\end{enumerate}

\end{proposition}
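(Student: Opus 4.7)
The plan is to use the combinatorial criterion from Section \ref{sec:STS} that $S(\sigma, \tau)$ is connected if and only if $\langle \sigma, \tau \rangle \subseteq S_n$ acts transitively on $\{1, \ldots, n\}$. With this reduction, both parts become statements about random pairs of permutations generating transitive subgroups, for which the central tool is Theorem \ref{thm:babaihayes2} (Babai-Hayes). In both cases, I would condition on $\sigma$ and apply Babai-Hayes with $G = \langle \sigma \rangle$, noting that the fixed points of the cyclic group $\langle \sigma \rangle$ coincide with those of $\sigma$ itself.

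For the HR Model, $\sigma \in \calK_{\mu_n}$ has at most $n^\alpha$ cycles, so it has at most $n^\alpha$ fixed points. For $n$ large enough that $n^\alpha \leq n/2$, Theorem \ref{thm:babaihayes2} yields
$$\Pr(\langle \sigma, \tau \rangle \text{ is transitive} \mid \sigma) > 1 - (n^\alpha + 1)\left(\frac{1}{n} + O(n^{-2})\right)$$
uniformly over $\sigma \in \calK_{\mu_n}$, and averaging over $\sigma \sim \Unif(\calK_{\mu_n})$ immediately gives the claimed $1 - 1/n^{1-\alpha} - O(1/n^{2-\alpha})$ bound.

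For the Standard Model, the same strategy applies but with the subtlety that the fixed-point count $F(\sigma)$ is now random. I would apply Babai-Hayes on the event $\{F(\sigma) \leq n/2\}$, which holds with overwhelming probability since $\Pr(F(\sigma) > n/2)$ decays super-polynomially. Using the classical fact that $\EE_\sigma[F(\sigma)] = 1$ for $\sigma \sim \Unif(S_n)$ and averaging Babai-Hayes over $\sigma$ gives a lower bound of $1 - 2/n - O(1/n^2)$.

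The main obstacle is then sharpening the Standard Model bound from $1 - 2/n$ to the claimed $1 - 1/n + O(1/n^2)$, since Babai-Hayes alone is off by a factor of two in the leading coefficient. I would close this gap via a direct inclusion-exclusion on proper nonempty invariant subsets $S \subsetneq \{1, \ldots, n\}$: the probability that a specific $S$ of size $k$ is simultaneously invariant under $\sigma$ and $\tau$ is $1/\binom{n}{k}^2$, so summing over the $\binom{n}{k}$ choices of $S$ gives a per-size contribution of at most $1/\binom{n}{k}$. The dominant term is $k = 1$ (equivalently $k = n-1$, by complementation), corresponding to common fixed points of $\sigma$ and $\tau$; refined inclusion-exclusion on the events $A_i = \{\sigma(i) = \tau(i) = i\}$ gives $\Pr(\bigcup_i A_i) = 1/n + O(1/n^2)$, while all invariant subsets of intermediate sizes $2 \leq k \leq n-2$ contribute only $O(1/n^2)$ in total. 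Verifying the latter requires some bookkeeping with the binomial coefficients, which I expect to be the most delicate piece of the argument.
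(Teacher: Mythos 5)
Your reduction to transitivity of $\langle \sigma, \tau\rangle$ and your treatment of the HR model via Theorem \ref{thm:babaihayes2} coincide with the paper's proof of part (2); the one quantitative slip is the factor $(n^\alpha+1)$: for $\alpha\in(0,1)$ the resulting extra $\frac{1}{n}$ term is not absorbed by $O\left(\frac{1}{n^{2-\alpha}}\right)$, and at $\alpha=0$ it doubles the leading constant. The paper instead uses that $\sigma$ has at most $n^\alpha<n$ cycles, so $\sigma\neq\Id$ and its number of fixed points is at most $n^\alpha-1$; hence Babai--Hayes applies with $f+1\le n^\alpha$ and yields the stated bound exactly, a one-line repair you should incorporate. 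For part (1) you genuinely diverge from the paper, which simply quotes Dixon's theorem (Theorem 2 of \cite{Dix}) for the estimate $1-n^{-1}+O(n^{-2})$ and does no further work; your plan re-derives this asymptotic from scratch via a union bound over proper invariant subsets (counting each complementary pair once, so the size-$1$ and size-$(n-1)$ contributions are literally the same event of a common fixed point) together with Bonferroni inclusion--exclusion on the events $A_i=\{\sigma(i)=\tau(i)=i\}$, and the bookkeeping you defer does go through: $\Pr(A_i)=n^{-2}$ and $\Pr(A_i\cap A_j)=\left(n(n-1)\right)^{-2}$ give $\Pr\left(\bigcup_i A_i\right)=\frac{1}{n}+O\left(\frac{1}{n^2}\right)$, while $\sum_{k=2}^{n-2}\binom{n}{k}^{-1}=O\left(\frac{1}{n^2}\right)$ since $\binom{n}{k}\ge\binom{n}{3}$ for $3\le k\le n-3$; the two directions together give the claimed equality. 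What the paper's citation buys is brevity; what your argument buys is a self-contained elementary proof of the standard-model estimate (essentially the leading term of Dixon's theorem), and it renders your intermediate Babai--Hayes averaging step, which only gives $1-\frac{2}{n}$, unnecessary.
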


\begin{proof}

An STS with horizontal and vertical permutations $\sigma$ and $\tau$ is connected if and only if the subgroup $\ideal{\sigma, \tau} \subseteq S_n$ generated by $\sigma$ and $\tau$ is transitive. 
\begin{enumerate}
\item (Standard Model) This follows directly from Dixon (Theorem 2 of \cite{Dix}) who proved that the probability a uniformly random pair of permutations in $S_n \times S_n$ generates a transitive subgroup in $S_n$ is $1 - n^{-1} + O(n^{-2})$.

\item (HR Model) For a fixed $\alpha < 1$, note that there exists $N \in \mathbb{N}$ such that for all $n \geq N$, $n^\alpha < \frac{n}{2}$. Now, let $n \geq N$. For any $G = \ideal{\sigma}$ such that $\sigma$ has at most $n^\alpha$ cycles, the number of fixed points of $G$ is less than $n^\alpha$. Applying Theorem \ref{thm:babaihayes2}, we see that for $(\sigma, \tau) \sim \Unif(\calK_{\mu_n} \times S_n)$, 
\begin{align*}
   \Pr(S(\sigma, \tau) \text{ is connected}) &=  \Pr(\ideal{\sigma, \tau} \text{ is transitive}) \\
   &> 1-n^\alpha \left( \frac{1}{n} + O(n^{-2})\right)\\
   &= 1 - \frac{1}{n^{1-\alpha}} - O\left(\frac{1}{n^{2-\alpha}}\right).\end{align*}
\end{enumerate} \end{proof}

\subsection{Genus distribution}
In Theorem \ref{thm:moments}, we obtained the moments for the number of vertices of a random square-tiled surface in the HR-model using Theorem \ref{thm:probabilityConvergenceReal}. As a Corollary (using Proposition \ref{prop:expectednumcycles}), we get the expectation and variance of the genus:

\expectedgenus*
\begin{proof}
Fix $\alpha \in [0, \frac{1}{2})$ and let $\mu_n$ be a partition of $n$ with at most $n^\alpha$ parts.
Let $V_{\mu_n}$ be the number of vertices of $S(\sigma, \tau)$ for $(\sigma,\tau) \sim \Unif(\calK_{\mu_n} \times S_n)$ and $C_n$ be the number of cycles of $\sigma$ for $\sigma \sim \Unif(A_n)$. 
By Theorem~\ref{thm:moments} and Proposition~\ref{prop:expectednumcycles}, for any $\epsilon > 0$ such that $\alpha + \epsilon < \frac{1}{2}$, we have, 
$$\EE[V_{\mu_n}] = \log(n) + \gamma + o(1) +O\left(\frac{\log(n)}{n^{1-2\alpha - 2\epsilon}}\right) = \log(n) + \gamma + o(1).$$
Recall that $G_{\mu_n} = \frac{n}{2} - \frac{V_{\mu_n}}{2}+1$. So, 
$$\EE[G_{\mu_n}] = \frac{n}{2} - \frac{\EE[V_{\mu_n}]}{2}+1 = \frac{n}{2}- \frac{\log(n)}{2}- \frac{\gamma}{2}+o(1).$$
Likewise, 
\begin{align*}
    \Var[G_{\mu_n}] = \frac{1}{4}\Var[V_{\mu_n}] = \frac{1}{4}(\EE[V_{\mu_n}^2] - \EE[V_{\mu_n}]^2) &= \frac{1}{4}\left(\EE[C_n^2] - \EE[C_n]^2 -2 \EE[C_n] \cdot O\left(\frac{\log(n)}{n^{1-2\alpha - 2\epsilon}} \right)+o(1)\right)\\
    &= \frac{1}{4}\Var[C_n] + o(1)\\
    &= \frac{\log(n)}{4} + \frac{\gamma}{4}- \frac{\pi^2}{24} + o(1).
\end{align*}
    
\end{proof}
As a consequence of Theorem \ref{thm:moments} we can also obtain the asymptotic distribution of the genus. Let $K_n$ be the number of cycles of $\pi \sim \Unif(S_n)$ and recall that $C_n$ is the number of cycles of $\pi \sim \Unif(A_n)$.  From Chmutov-Pittel \cite{ChmuPit} we first have a strong central local limit theorem for $C_n$ as follows:

\begin{proposition}[\cite{ChmuPit}]
Let $a > 0$ fixed.  Then uniformly for $\ell$ satisfying $\frac{\ell - \EE[K_n] }{\sqrt{\Var[K_n]}} \in [-a,a]$, 
$$\Pr(C_n = \ell) = \frac{(2 + O(\Var[K_n]^{-1/2} )e^{-\frac{1}{2} \cdot \frac{(\ell - \EE[K_n])^2}{ \Var[K_n]}}}{\sqrt{2 \pi \Var[K_n]}} $$
    
\end{proposition}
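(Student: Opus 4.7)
The plan is to reduce the claim about $C_n$ on the alternating group to the classical local central limit theorem for $K_n$ on the full symmetric group. The key observation is that the sign of a permutation $\pi \in S_n$ equals $(-1)^{n - c(\pi)}$, where $c(\pi)$ is the total number of cycles (including fixed points). Hence $\pi \in A_n$ if and only if $c(\pi) \equiv n \pmod{2}$, and the support of $C_n$ is precisely the set of integers $\ell$ with $\ell \equiv n \pmod 2$; for $\ell$ of the opposite parity we have $\Pr(C_n = \ell) = 0$. For $\ell \equiv n \pmod 2$, every $\pi \in S_n$ with $c(\pi) = \ell$ automatically lies in $A_n$, so
\begin{equation*}
\Pr(C_n = \ell) \;=\; \frac{\#\{\pi \in S_n : c(\pi) = \ell\}}{n!/2} \;=\; 2\,\Pr(K_n = \ell).
\end{equation*}
This accounts for the factor of $2$ appearing in the numerator of the stated formula.

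The remaining task is to establish the local central limit theorem for $K_n$. I would use Feller's classical coupling $K_n \stackrel{d}{=} X_1 + X_2 + \cdots + X_n$, where the $X_i$ are independent Bernoulli variables with $\Pr(X_i = 1) = 1/i$; this is visible from the factorization $\EE[t^{K_n}] = t(t+1)\cdots(t+n-1)/n! = \prod_{i=1}^n\bigl(\tfrac{i-1}{i} + \tfrac{t}{i}\bigr)$. From this representation one recovers $\EE[K_n] = \sum_{i=1}^n 1/i = \log n + \gamma + o(1)$ and $\Var[K_n] = \sum_{i=1}^n (1/i - 1/i^2) = \log n + \gamma - \pi^2/6 + o(1)$, both tending to infinity (cf.\ Proposition \ref{prop:expectednumcycles}).

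Since each summand is a bounded integer-valued random variable with lattice span $1$ and the total variance diverges, a standard local limit theorem for sums of independent lattice variables applies. Via a Fourier-analytic argument on the characteristic function $\phi_{K_n}(\theta) = \prod_{i=1}^n(1 - \tfrac{1}{i} + \tfrac{e^{i\theta}}{i})$, which is bounded away from $1$ on $[-\pi,\pi]\setminus\{0\}$, one obtains
\begin{equation*}
\Pr(K_n = \ell) \;=\; \frac{1}{\sqrt{2\pi \Var[K_n]}}\exp\!\Bigl(-\tfrac{1}{2}\tfrac{(\ell-\EE[K_n])^2}{\Var[K_n]}\Bigr)\bigl(1 + O(\Var[K_n]^{-1/2})\bigr),
\end{equation*}
uniformly for $\ell$ with $(\ell - \EE[K_n])/\sqrt{\Var[K_n]} \in [-a,a]$. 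Combining with the identity $\Pr(C_n = \ell) = 2\Pr(K_n = \ell)$ yields the displayed formula.

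The main obstacle is the quantitative uniformity: the error must genuinely be $O(\Var[K_n]^{-1/2})$ rather than merely $o(1)$. I expect this to be handled most cleanly via an Edgeworth-type expansion for independent bounded summands (as in Petrov's monograph on sums of independent random variables), since each $X_i$ has finite third moment and the Lindeberg condition is trivial. An alternative route, perhaps closer in spirit to the analytic techniques of Chmutov-Pittel and compatible with the explicit PGF, is a saddle-point analysis of the contour integral $\Pr(K_n = \ell) = \tfrac{1}{2\pi i}\oint \tfrac{t(t+1)\cdots(t+n-1)}{n!\,t^{\ell+1}}\,dt$ on a circle of appropriate radius; the sharpness of the Gaussian approximation is then controlled by higher cumulants of $K_n$, each of order $O(\log n)$.
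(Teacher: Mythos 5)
The paper offers no proof of this Proposition: it is cited directly from Chmutov--Pittel \cite{ChmuPit}, so there is no internal argument to compare yours against. Your two-step plan is nonetheless a legitimate route to the statement.

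The reduction from $A_n$ to $S_n$ is correct and is really the only group-theoretic content of the claim. Since $\mathrm{sgn}(\pi)=(-1)^{n-c(\pi)}$, the level set $\{\pi : c(\pi)=\ell\}$ lies entirely inside $A_n$ when $\ell\equiv n\pmod 2$ and entirely outside it otherwise, giving $\Pr(C_n=\ell)=2\Pr(K_n=\ell)$ in the first case and $\Pr(C_n=\ell)=0$ in the second. You are right that this forces a parity restriction that the Proposition as worded omits; the paper does add $n-\ell$ even when it invokes the Proposition to derive Theorem~\ref{thm:genusdist}, so this is a wording slip in the cited statement rather than a gap in your argument.

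For the local limit theorem for $K_n$, the Feller-coupling decomposition into independent Bernoulli summands with means $1/i$ is the right starting point, and your instinct that the Edgeworth route delivers exactly the claimed rate is correct. The third cumulant of $K_n$ is $\kappa_3=\sum_{i=1}^n\kappa_3(X_i)$, which grows like $\log n$, the same order as $\Var[K_n]=\sigma_n^2$; hence the leading Edgeworth correction has size $\kappa_3/\sigma_n^3=O(\sigma_n^{-1})=O(\Var[K_n]^{-1/2})$, matching the Proposition's relative error. Either a Petrov-style lattice local limit theorem (variance diverging, span $1$, $|\phi_{K_n}(\theta)|$ uniformly bounded away from $1$ on compact subsets of $(0,\pi]$) or a saddle-point evaluation of $\frac{1}{2\pi i}\oint t^{-\ell-1}\prod_{i=1}^n\frac{t+i-1}{i}\,dt$ gives this uniformly for $(\ell-\EE[K_n])/\sigma_n\in[-a,a]$. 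Note that the total variance is only $\Theta(\log n)$, so this is a slowly-growing-variance regime: the classical hypotheses hold, but the resulting error decays only polylogarithmically in $n$, which is precisely why the Proposition's error is a power of $\log n$ rather than a power of $n$. Your sketch correctly identifies the uniformity as the only real work; carrying out the Petrov or saddle-point computation is the step you would still need to supply for a complete proof.
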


It is known that $\EE[K_n] = \log(n) + O(1)$ and $\Var[K_n] = \log(n) + O(1)$ (See Proposition \ref{prop:expectednumcycles}). Since for any fixed $\alpha \in [0, \frac{1}{2})$, one can choose $\epsilon > 0$ such that $\frac{1}{n^{1-2\alpha-2\epsilon}} \in O\left(\frac{1}{\Var[K_n]}\right) = O\left(\frac{1}{\log(n)}\right)$, using Theorem \ref{thm:probabilityConvergenceReal} we get the following distribution of the number of vertices:
\begin{theorem}
Let $\alpha \in [0, \frac{1}{2})$ and $\mu_n \vdash n$ with at most $n^\alpha$ parts. Let $a > 0$ fixed. Then uniformly for $\ell$ satisfying $\frac{\ell - \EE[K_n] }{\sqrt{\Var[K_n]}} \in [-a,a]$, 
$$\Pr(V_{\mu_n} = \ell) = \frac{(2 + O(\log(n)^{-1/2} )e^{-\frac{1}{2} \cdot \frac{(\ell - \EE[K_n])^2}{ \Var[K_n]}}}{\sqrt{2 \pi \Var[K_n]}} $$

\end{theorem}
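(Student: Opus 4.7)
The approach is to compare $\Pr(V_{\mu_n}=\ell)$ with $\Pr(C_n=\ell)$ using the total-variation bound already established in Theorem \ref{thm:probabilityConvergenceReal}, and then quote the Chmutov--Pittel strong local limit theorem that the excerpt cites immediately before the statement. By Proposition \ref{prop:commutatorstratum}, the number of vertices $V_{\mu_n}$ equals the number of cycles of the commutator $[\sigma,\tau]$ for $(\sigma,\tau)\sim\Unif(\calK_{\mu_n}\times S_n)$, whose distribution is exactly $P_{\mu_n}$ on $A_n$. Setting $B_\ell := \{\pi \in A_n : \pi \text{ has } \ell \text{ cycles}\}$, we have $\Pr(V_{\mu_n}=\ell) = P_{\mu_n}(B_\ell)$ and $\Pr(C_n=\ell) = U_n(B_\ell)$, so by the very definition of total variation distance,
$$\bigl|\Pr(V_{\mu_n}=\ell) - \Pr(C_n=\ell)\bigr| \leq \|P_{\mu_n} - U_n\|.$$

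Given $\alpha \in [0, \tfrac{1}{2})$, I would fix an $\epsilon > 0$ with $\alpha+\epsilon < \tfrac{1}{2}$ and set $s := 1 - 2\alpha - 2\epsilon > 0$, so that Theorem \ref{thm:probabilityConvergenceReal} yields $\|P_{\mu_n}-U_n\| = O(n^{-s})$. Combining this with the Chmutov--Pittel formula
$$\Pr(C_n = \ell) = \frac{\bigl(2 + O(\log(n)^{-1/2})\bigr)\,e^{-\frac{1}{2}(\ell - \EE[K_n])^2/\Var[K_n]}}{\sqrt{2\pi\Var[K_n]}}$$
valid uniformly for $\ell$ with $(\ell - \EE[K_n])/\sqrt{\Var[K_n]} \in [-a,a]$, gives
$$\Pr(V_{\mu_n}=\ell) = \Pr(C_n = \ell) + O(n^{-s}).$$

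The only remaining work is to fold the additive $O(n^{-s})$ error into the multiplicative $O(\log(n)^{-1/2})$ factor so that the final answer has the form stated. For $\ell$ in the prescribed window, the exponential factor $e^{-\frac{1}{2}(\ell-\EE[K_n])^2/\Var[K_n]}$ is bounded between $e^{-a^2/2}$ and $1$. Since $\Var[K_n] = \log n + O(1)$ by Proposition \ref{prop:expectednumcycles}, the Gaussian density $e^{-\cdots}/\sqrt{2\pi\Var[K_n]}$ is of order $1/\sqrt{\log n}$; dividing $O(n^{-s})$ by this density produces $O(n^{-s}\sqrt{\log n}) = o(\log(n)^{-1/2})$, which can be absorbed into the existing $O(\log(n)^{-1/2})$ correction. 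I will also remark on a parity consistency: commutators lie in $A_n$, so a permutation in $A_n$ has number of cycles $\ell$ with $n-\ell$ even; hence both $\Pr(V_{\mu_n}=\ell)$ and $\Pr(C_n=\ell)$ vanish on the same parity class, and the identity holds trivially off this class.

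The main obstacle, such as it is, is purely the bookkeeping in this last absorption step; the substantive ingredients (total-variation convergence and the Chmutov--Pittel local limit theorem) are already in hand. In essence the theorem is a direct corollary of Theorem \ref{thm:probabilityConvergenceReal} together with the cited local limit theorem, and no additional representation-theoretic or combinatorial input is required.
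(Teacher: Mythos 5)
Your proposal is correct and follows essentially the same route as the paper: identify $V_{\mu_n}$ with the cycle count of the commutator, bound $|\Pr(V_{\mu_n}=\ell)-\Pr(C_n=\ell)|$ by $\|P_{\mu_n}-U_n\| = O(n^{-(1-2\alpha-2\epsilon)})$ via Theorem \ref{thm:probabilityConvergenceReal}, and absorb this additive error into the $O(\log(n)^{-1/2})$ multiplicative factor of the Chmutov--Pittel local limit theorem, using $\Var[K_n]=\log n + O(1)$. Your explicit absorption step (dividing by the Gaussian density, which is uniformly of order $1/\sqrt{\log n}$ on the window) is just a slightly more detailed version of the bookkeeping the paper leaves implicit.
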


Finally, since the genus, $G_{\mu_n}$, is related to the number of vertices, $V_{\mu_n}$, by the formula:
$$G_{\mu_n} = \frac{n}{2} - \frac{V_{\mu_n}}{2} +1,$$ we obtain the distribution of $G_{\mu_n}$:

\genusdistribution*

\subsection{Likelihood of strata and most likely stratum}

Consider any random model for choosing a labelled STS with $n$ squares which in turn gives two permutations $\sigma$ and $\tau$ in $S_n$. Since the commutator $[\sigma, \tau]$ is always an element of $A_n$ just as in the case for the Standard and HR models, choosing a random labelled STS induces a distribution on $A_n$. Let $P_n$ be this distribution. 

We devote this section to proving the following theorem which gives a sufficient criterion on the random model that ensures that the most likely stratum is the one with a single large cone point:

\sufficientcriterionmostlikelystratum*

Before stating the proof, we need to analyze the relative sizes of conjugacy classes of $S_n.$ In particular, using Proposition \ref{prop:cycleNumbers} we will find the three largest conjugacy classes in $S_n$. Towards that, we have the following intermediary lemma which states that, in most cases, combining the largest two cycles of a conjugacy class produces another conjugacy class that is larger or equal in size. 

\begin{lemma}\label{lem:coalescecycles} Let $n \geq 4$.
    Let $\calK$ be a conjugacy class in $S_n$ with canonical representative $(1 \cdots m_1) (m_1 + 1 \cdots m_2) (m_2 + 1 \cdots m_3) \cdots (m_{k-1}+1 \cdots m_{k}).$ Let $\calK'$ be the conjugacy class in $S_n$ with representative $(1 \cdots m_2)(m_2+1 \cdots m_3) \cdots (m_{k-1}+1 \cdots m_k)$. Suppose $m_1 \geq 2$ and $m_2-m_1 \geq 2$ (i.e. $\calK$ has at least two non-trivial cycles). Then, $|\calK'| \geq |\calK|$. 
\end{lemma}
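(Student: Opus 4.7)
The plan is to compute the ratio $|\calK'|/|\calK|$ directly from the size formula in Proposition \ref{prop:cycleNumbers} and show it is at least $1$. Let $a = m_1$ and $b = m_2 - m_1$; the hypotheses give $a, b \geq 2$, while the canonical-form condition $m_i - m_{i-1} \leq m_{i-1} - m_{i-2}$ forces the cycle lengths of $\calK$ to be non-increasing, so in particular $a \geq b$. The cycle types of $\calK$ and $\calK'$ agree on all parts except that $\calK$ contains one $a$-cycle and one $b$-cycle which $\calK'$ replaces by a single cycle of length $a+b$.

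Writing $\alpha_k$ and $\alpha_k'$ for the multiplicities of part $k$ in $\calK$ and $\calK'$ respectively, all factors in the product $\prod_k k^{-\alpha_k}/\alpha_k!$ cancel between numerator and denominator except those at $a$, $b$, and $a+b$. A short bookkeeping then yields
\[
\frac{|\calK'|}{|\calK|} = \frac{ab\,\alpha_a \alpha_b}{(a+b)(\alpha_{a+b}+1)} \text{ if } a \neq b, \qquad \frac{|\calK'|}{|\calK|} = \frac{a\,\alpha_a(\alpha_a - 1)}{2(\alpha_{2a}+1)} \text{ if } a = b.
\]

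The key observation that makes both expressions easy to bound from below by $1$ is that $\alpha_{a+b} = 0$ in the first case and $\alpha_{2a} = 0$ in the second: since $a$ is the \emph{largest} part in the cycle type of $\calK$ and $a+b > a$ (resp.\ $2a > a$), no cycle that long can appear in $\calK$. Removing those factors, Case~1 reduces to $ab\,\alpha_a\alpha_b \geq a+b$, which follows from $\alpha_a,\alpha_b \geq 1$ together with the elementary inequality $ab \geq a+b$ for $a,b \geq 2$ (equivalent to $(a-1)(b-1) \geq 1$). Case~2 uses $\alpha_a \geq 2$, because $\calK$ then contains two cycles of length $a$, and combines with $a \geq 2$ to give $a\,\alpha_a(\alpha_a-1)/2 \geq 2$.

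I do not anticipate any serious obstacle; the only subtlety is recognizing that the canonical ordering of cycle lengths prevents a cycle of length $a+b$ from already being present in $\calK$, which is precisely what eliminates the potentially large factor $\alpha_{a+b}+1$ from the denominator and leaves a bound that depends only on $a$ and $b$.
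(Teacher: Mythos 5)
Your proof is correct and follows essentially the same route as the paper: both rest on the class-size formula of Proposition \ref{prop:cycleNumbers} together with the elementary inequality $ab \geq a+b$ for $a,b \geq 2$. The only difference is organizational — by working with the ratio $|\calK'|/|\calK|$ with general multiplicities and observing that $\alpha_{a+b}=0$ (since $a=m_1$ is the largest part), you collapse the paper's four-case analysis on equalities among the first three cycle lengths into just the two cases $a\neq b$ and $a=b$, which is a slightly cleaner bookkeeping of the same computation.
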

\begin{proof}
    Let $(a_1, \dots, a_n)$ be the cycle type of $\calK$ and consider $|\calK'| - |\calK|$. There are four cases depending on the relative lengths of the largest cycles of $\calK$:

    Case 1: $m_1 > m_2 - m_1 > m_3 - m_2$. In this case, by Proposition \ref{prop:cycleNumbers}, \begin{equation*}|\mathcal{K}'| - |\mathcal{K}| = \left(\prod_{k = 1}^{m_3 - m_2} \frac{k^{-a_k}}{a_k !}\right)\left( \frac{1}{m_2} - \frac{1}{m_1 (m_2 - m_1)}\right).\end{equation*} 

    Now note that given two integers $a, b \geq 2$, we have that $(a-1)(b-1) - 1 \geq 0$. Hence, $ab \geq a+b$. Applying this fact, we see that $|\mathcal{K}'| - |\mathcal{K}|\geq 0$. 

    Case 2: $m_1 > m_2 - m_1$ and $m_2 - m_1 = m_3 - m_2.$ Again, \begin{align*}|\mathcal{K}'| - |\mathcal{K}| & = \left(\prod_{k = 1}^{m_2 - m_1} \frac{k^{-a_k}}{a_k !}\right) \left(\frac{a_{m_2 - m_1} (m_2 - m_1)}{m_2}  - \frac{1}{m_1} \right) \\ & = a_{m_2 - m_1}(m_2 - m_1) \left(\prod_{k = 1}^{m_2 - m_1} \frac{k^{-a_k}}{a_k !}\right) \left(\frac{1}{m_2}  - \frac{1}{a_{m_2 - m_1} m_1 (m_2 - m_1)} \right).\end{align*} Now, the term $\frac{1}{a_{m_2 - m_1} m_1 (m_2 - m_1)}$ is smaller than $\frac{1}{m_1(m_2 - m_1)},$ so by the argument in Case 1, we have that $|\mathcal{K}'| - |\mathcal{K}|$ is positive in this case.

    Case 3: $m_1 = m_2 - m_1> m_3 - m_2.$ Then, \begin{align*}|\mathcal{K}'| - |\mathcal{K}| & = \left(\prod_{k = 1}^{m_3 - m_2} \frac{k^{-a_k}}{a_k !}\right) \left(\frac{1}{m_2}  - \frac{1}{2 m_1(m_2 - m_1)} \right),\end{align*} which again, by the argument in Case 1, is positive.

    Case 4: $m_1 = m_2 - m_1 = m_3 - m_2.$ We then have that \begin{align*}|\mathcal{K}'| - |\mathcal{K}| & = \left(\prod_{k = 1}^{m_1} \frac{k^{-a_k}}{a_k !}\right) \left(\frac{a_{m_1} (a_{m_1}-1) m_1 (m_2 - m_1) }{m_2}  - 1 \right).\end{align*} Since $a_{m_1} \geq 3,$ the above is positive by the same argument as in Case 1.

We note that in Cases 1 and 3, when $m_3 - m_2 = 0$, the product $\left(\prod_{k = 1}^{m_3 - m_2} \frac{k^{-a_k}}{a_k !}\right)$ above is taken to be 1 so that the result of those cases still hold. 

    Since the cases were exhaustive, $\mathcal{K}'$ has at least as many elements as $\mathcal{K}.$    \end{proof}
Next, we identify the three largest conjugacy classes in $S_n$:

\begin{lemma}
    The three largest conjugacy classes in $S_n$ for $n \geq 7$ are the classes of $(1 \cdots n-1)(n),$ $(1 \cdots n),$ and $(1 \cdots n-3)(n-2 \,\,\, n-1)(n)$ in that order. \label{lem:classesInSN}
\end{lemma}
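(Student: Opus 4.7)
The plan is to combine Proposition \ref{prop:cycleNumbers} with iterated coalescing via Lemma \ref{lem:coalescecycles}. I would start by computing the sizes of the three candidates directly. The $n$-cycle class has cycle type $a_n = 1$, so size $(n-1)! = n!/n$; the class of $(1 \cdots n-1)(n)$ has $a_1 = a_{n-1} = 1$, so size $n!/(n-1)$; and the class of $(1 \cdots n-3)(n-2\, n-1)(n)$ has $a_1 = a_2 = a_{n-3} = 1$, so size $n!/(2(n-3))$. The ordering $n!/(n-1) > n!/n > n!/(2(n-3))$ then holds precisely when $2(n-3) > n$, i.e.\ $n \geq 7$, exactly matching the hypothesis (indeed one checks that $|\calK_{(1\,2\,3)(4\,5)(6)}| = |\calK_{(1\cdots 6)}|$ when $n = 6$, so the threshold is sharp).

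I would then bound $|\calK|$ for any $\calK$ not among the three candidates, splitting on the number of non-trivial cycles. If $\calK$ has at most one non-trivial cycle, then $\calK = \calK_{(1\cdots k)}$ for some $k \in \{0,2,\ldots,n\} \setminus \{n-1,n\}$, and $|\calK| = n!/(k(n-k)!)$; this is maximized for such $k$ at $k = n-2$, giving $n!/(2(n-2)) < n!/(2(n-3))$. If $\calK$ has three or more non-trivial cycles, I would iterate Lemma \ref{lem:coalescecycles} to coalesce its top two non-trivial cycles until exactly two non-trivial cycles remain, producing a class $\calK^*$ with $|\calK^*| \geq |\calK|$. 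A check of the four cases in the proof of Lemma \ref{lem:coalescecycles} shows each inequality is actually strict whenever $m_1 \geq 2$ and $m_2 - m_1 \geq 2$, so in fact $|\calK| < |\calK^*|$. It therefore suffices to bound $|\calK^*|$ over all classes with exactly two non-trivial cycles.

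For such a $\calK^*$ with cycles of lengths $a \geq b \geq 2$ and $r = n - a - b$ fixed points, Proposition \ref{prop:cycleNumbers} gives $|\calK^*| \leq n!/(ab \cdot r!)$ (with equality unless $a = b$). For fixed $r$, the minimum of $ab$ over $a + b = n - r$, $a \geq b \geq 2$ is attained at $(a,b) = (n-r-2, 2)$, yielding $|\calK^*| \leq n!/(2(n-r-2) r!)$. A ratio comparison of consecutive values of $r$ shows that, for $n \geq 7$, this bound is uniquely maximized at $r = 1$ with value $n!/(2(n-3))$, and is achieved only by $\calK^* = $ (iii). Combining with the previous paragraph yields $|\calK| \leq n!/(2(n-3))$ with equality only at $\calK = $ (iii), which together with the ordering established in the first paragraph finishes the proof.

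The main obstacle is really the joint optimization over $(a, b, r)$ in the two-non-trivial-cycle case: one must check both that $(n-r-2, 2)$ minimizes $ab$ for each $r$ and that $r = 1$ then maximizes the resulting expression $n!/(2(n-r-2)r!)$. The threshold $n \geq 7$ is forced by the comparison of the second and third candidates, $n!/n$ versus $n!/(2(n-3))$. Strictness in Lemma \ref{lem:coalescecycles}, needed to rule out other classes of maximum-or-equal size, follows from a direct inspection of the four cases in its proof, none of which can saturate under the hypotheses $m_1 \geq 2$, $m_2 - m_1 \geq 2$.
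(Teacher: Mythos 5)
Your proposal is correct and follows essentially the same route as the paper: both compute the three candidate class sizes from Proposition \ref{prop:cycleNumbers} (the threshold $n \geq 7$ coming from $2(n-3) > n$) and use Lemma \ref{lem:coalescecycles} iteratively to reduce any class with many non-trivial cycles before bounding its size. The only cosmetic difference is that you finish with a single optimization of $n!/(ab\,r!)$ over types with exactly two non-trivial cycles and $r$ fixed points (plus a strictness observation the statement does not actually require), whereas the paper carries out the same comparison as a case analysis on the number of trivial and non-trivial cycles.
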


\begin{proof}
Let $\calK_1, \calK_2$ and  $\calK_3$ be the conjugacy classes of $(1 \cdots n-1)(n),$ $(1 \cdots n),$ and $(1 \cdots n-3)(n-2 \,\,\, n-1)(n)$ respectively. 

We start by considering $\calK_3$. Note that the conjugacy class of the identity consists of only 1 element so it is smaller than $\calK_3$. We deal with the rest of the conjugacy class in a few cases:

Case 1: Let $\calK$ be the conjugacy class with a single $k$-cycle and $(n-k)$ 1-cycles where $k \geq 2$ and $(n-k) \geq 2$. Then,
$$|\calK| = \frac{n!}{k(n-k)!} \,\text{ and }\, |\calK_3| = \frac{n!}{2(n-3)}$$
Note that we have $k(n-k)! \geq k(n-k)$ which is minimized when $k = 2$ and $k = n-2$ so that $k(n-k)! \geq 2(n-2) > 2(n-3)$. Hence $|\calK_3| \geq |\calK|$.

Case 2: Let $\calK$ be a conjugacy class with at least two non-trivial cycles. We break this into further cases:

Case 2.1: If $\calK$ has exactly two cycles (both non-trivial), then the canonical representative has the form $(1 \cdots k)(k+1 \cdots n)$ with $k \geq 2$ and $n-k \geq 2$. If $k = n-k$ we have $|\calK| = \frac{n!}{2k^2}$. Noting that $2k^2 = 2k(n-k) \geq 2n > 2(n-3)$, we conclude that $|\calK_3| > |\calK|$. If $k > n-k$ we have  $|\calK| = \frac{n!}{k(n-k)}$. Note that $k(n-k)$ with $2 \leq k \leq n-2$ is minimized when $k = 2$ and $k = n-2$. Hence, $k(n-k) \geq 2(n-2) > 2(n-3)$ implying $|\calK_3| > |\calK|$.

Case 2.2: If $\calK$ has exactly three cycles and all are non-trivial, then a canonical representative has the form $(1 \cdots k)(k+1 \cdots j)(j+1 \cdots n)$ with $k \geq 2$, $j-k \geq 2$ and $n-j \geq 2$. Applying Lemma \ref{lem:coalescecycles}, we see that $\calK$ is smaller than the class of $(1 \cdots j)(j+1 \cdots n)$. Applying Case 2.1, this is smaller than the class of $\calK_3$.

Case 2.3: If $\calK$ has exactly three cycles and exactly two are non-trivial, then the canonical representative has the form $(1 \cdots k)(k+1 \cdots n-1)(n)$ with $k \geq 2$ and $n-1-k \geq 2$. If $k = n-1-k$, then $|\calK| = \frac{n!}{2k^2}$. Noting that $2k^2 = 2k(n-1-k) \geq 2(n-1) > n-3$ we conclude $|\calK_3| > |\calK|$.
If $k > (n-1-k)$, then $|\calK| =\frac{n!}{k(n-1-k)}$. Noting that $k(n-1-k)$ is minimized when $k = 2$ and $k = n-3$, we have $k(n-1-k) \geq 2(n-3)$. Hence, $|\calK_3| \geq |\calK|$.

Case 2.4: If $\calK$ has more than three cycles and more than one are non-trivial, then applying Lemma \ref{lem:coalescecycles} repeatedly, can reduce to Case 1 (when $\calK$ has more than one trivial cycle), Case 2.3 (when $\calK$ has exactly one trivial cycle) and Case 2.1 (when $\calK$ has no trivial cycles).

This shows that $\calK_3$ is larger or equal in size than any conjugacy class $\calK$ with at least two non-trivial cycles or with one non-trivial cycle and at least two trivial cycles. There are two other conjugacy classes left: one with exactly one non-trivial cycle and no trivial cycles and another with exactly one non-trivial cycle and exactly one trivial cycle. 

For the former, if a conjugacy class $\calK$ has exactly one non-trivial cycle and no trivial cycles, then the canonical representative is $(1 \cdots n)$ and $\calK = \calK_2$. In that case 
$$|\calK_2| = \frac{n!}{n}.$$
Since $2(n-3) > n$ when $n \geq 7$, we see that $|\calK_2| > |\calK_3|.$ For the latter, if a conjugacy class $\calK$ has exactly one non-trivial cycle and exactly one trivial cycle, then the canonical representative is $(1 \cdots n-1)(n)$ and $\calK = \calK_1$. In that case $$|\calK_1| = \frac{n!}{n-1}.$$ Since $n > n-1$, $|\calK_1| > |\calK_2|$. 
Therefore, $|\calK_1| > |\calK_2| > |\calK_3|$. \end{proof}

The next Lemma gives a bound on the difference in probability (of a uniformly randomly chosen permutation in $A_n$) between landing in the largest conjugacy class of $S_n$ contained in $A_n$ and any other conjugacy class. 

\begin{lemma}
    Let $n \geq 7$, let $\calK_{\max}$ be the largest conjugacy class of $S_n$ contained in $A_n$.  Let $\calK$ be any other conjugacy class of $S_n$ inside $A_n$. For $\pi \sim \Unif(A_n)$,
    \begin{equation*}\Pr(\pi \in \calK_{\max}) - \Pr(\pi \in \calK) \geq \frac{n - 6}{n(n-3)}.\end{equation*} \label{lem:uniformBound} 
\end{lemma}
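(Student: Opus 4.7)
The plan is to reduce the problem to comparing $\Pr(\pi \in \calK_{\max})$ against $\Pr(\pi \in \calK^*)$, where $\calK^*$ is the second-largest conjugacy class of $S_n$ that lies in $A_n$, since any other class $\calK \subseteq A_n$ satisfies $\Pr(\pi \in \calK) \leq \Pr(\pi \in \calK^*)$. The identification of both $\calK_{\max}$ and $\calK^*$ will be extracted from Lemma \ref{lem:classesInSN} combined with sign parity calculations.

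First I would compute the signs of the three largest conjugacy classes identified in Lemma \ref{lem:classesInSN}. Writing $\calK_A = [(1 \cdots n{-}1)(n)]$, $\calK_B = [(1 \cdots n)]$, $\calK_C = [(1 \cdots n{-}3)(n{-}2 \, n{-}1)(n)]$, their signs are $(-1)^{n-2}$, $(-1)^{n-1}$, and $(-1)^{n-3}$ respectively. Hence for $n$ odd the classes in $A_n$ among the top three are $\calK_B$ (size $n!/n$) and $\calK_C$ (size $n!/(2(n-3))$), while for $n$ even only $\calK_A$ (size $n!/(n-1)$) lies in $A_n$. Thus $\calK_{\max} = \calK_B$ when $n$ is odd and $\calK_{\max} = \calK_A$ when $n$ is even, giving $\Pr(\pi \in \calK_{\max}) = 2/n$ or $2/(n-1)$ respectively via $|A_n| = n!/2$.

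Next, since Lemma \ref{lem:classesInSN} tells us that every conjugacy class outside $\{\calK_A, \calK_B, \calK_C\}$ has size strictly less than $|\calK_C| = n!/(2(n-3))$, any $\calK \subseteq A_n$ distinct from $\calK_{\max}$ satisfies $\Pr(\pi \in \calK) \leq 1/(n-3)$: for $n$ odd the extremal case is $\calK = \calK_C$ with equality, and for $n$ even the inequality is strict since $\calK_B, \calK_C \not\subseteq A_n$. The final step is then a direct computation. For $n$ odd,
\begin{equation*}
\Pr(\pi \in \calK_{\max}) - \Pr(\pi \in \calK) \;\geq\; \frac{2}{n} - \frac{1}{n-3} \;=\; \frac{n-6}{n(n-3)},
\end{equation*}
which is exactly the claimed bound (and the bound is tight when $\calK = \calK_C$). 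For $n$ even,
\begin{equation*}
\Pr(\pi \in \calK_{\max}) - \Pr(\pi \in \calK) \;>\; \frac{2}{n-1} - \frac{1}{n-3} \;=\; \frac{n-5}{(n-1)(n-3)},
\end{equation*}
and one verifies by cross-multiplying that $\frac{n-5}{(n-1)(n-3)} \geq \frac{n-6}{n(n-3)}$ reduces to $2n \geq 6$, which holds for all $n \geq 7$.

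I expect no real obstacle here: the argument is essentially bookkeeping that piggybacks on Lemma \ref{lem:classesInSN}. The only mildly delicate point is making sure the parity of $n$ really does force $\calK_A$ (resp. $\calK_B, \calK_C$) out of $A_n$ in the opposite case, so that the sizes $|\calK_{\max}|$ and the upper bound on $|\calK|$ come out as stated; this is a quick sign check. Everything else is arithmetic.
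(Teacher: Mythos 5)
Your argument is correct and takes essentially the same route as the paper: identify $\calK_{\max}$ and bound every other class via Lemma \ref{lem:classesInSN} and the size of $\calK_{(1\cdots n-3)(n-2\,n-1)(n)}$, then do the arithmetic $\frac{2}{n} - \frac{1}{n-3} = \frac{n-6}{n(n-3)}$. The only cosmetic difference is that the paper collapses the two parities by observing $\Pr(\pi \in \calK_{\max}) \geq 2/n$ in both cases, whereas you carry out the even case separately with the sharper value $2/(n-1)$ and then verify that bound dominates; both are fine.
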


\begin{proof}
Depending on the parity of $n$, either $(1 \cdots n)$ or $(1 \cdots n-1)(n)$ is contained in $A_n.$ By Lemma \ref{lem:classesInSN}, this means that $\Pr(\pi \in \calK_{\max}) \geq \frac{2}{n}.$ Moreover, by Lemma \ref{lem:classesInSN}, we have that $\text{Pr}(\pi \in \calK) \leq \frac{2}{n!}|\calK_{(1 \cdots n-3)(n-2 \,\, n-1)(n)}| = \frac{1}{n-3}$ where $\calK_{(1 \cdots n-3)(n-2 \,\, n-1)(n)}$ is the conjugacy class of $S_n$ containing $(1 \cdots n-3)(n-2 \,\, n-1)(n)$. Hence, 
    $$\Pr(\pi \in \calK_{\max}) - \Pr(\pi \in \calK) \geq \frac{2}{n} - \frac{1}{n-3}  = \frac{n-6}{n(n-3)}.$$\end{proof}

We are now able to deduce asymptotic bounds on the probabilities of landing in a given conjugacy class with the $P_n$ distribution on $A_n$:

\begin{theorem}
    Let $g \in A_n$ with cycle type $t = (a_1, a_2, \ldots, a_n)$, and let $\mathcal{K}_g$ be the conjugacy class of $g$ in $S_n.$ Let $P_n$ be a probability distribution on $A_n$ induced by a random square-tiled surface model such that equation \eqref{eqn:l2bound} holds for some $s \in \RR$. Assume also that $P_n$ is constant on conjugacy classes of $S_n$. Let $\sigma$ be a random permutation from $A_n$ distributed according to $P_n$. Then, \begin{equation*}\left|\Pr(\sigma \in \mathcal{K}_g) - 2 \prod_{k = 1}^n \frac{k^{-a_k}}{a_k !} \right|= O\left(\frac{1}{n^{(1 + s)/2}}\right) \end{equation*}
    with the constant in the asymptotic upper bound independent of the element $g \in A_n$.
    \label{thm:stratumprobabilities} 
\end{theorem}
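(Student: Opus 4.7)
The plan is to reduce the target difference to controlling the pointwise difference $|P_n(g) - U_n(g)|$ and then bound the latter using the $L^2$ hypothesis \eqref{eqn:l2bound}. First, by Proposition \ref{prop:cycleNumbers}, the quantity $2\prod_{k=1}^n k^{-a_k}/a_k!$ is exactly the probability that a uniformly random element of $A_n$ lies in $\mathcal{K}_g$, i.e.\ $\sum_{h \in \mathcal{K}_g} U_n(h)$. Since $\mathcal{K}_g \subseteq A_n$ (elements of a conjugacy class of $S_n$ share a cycle type and hence a sign), and both $P_n$ and $U_n$ are constant on $\mathcal{K}_g$, I would first rewrite the target as
$$\left|\Pr(\sigma \in \mathcal{K}_g) - 2\prod_{k=1}^n \frac{k^{-a_k}}{a_k!}\right| = |\mathcal{K}_g| \cdot |P_n(g) - U_n(g)|.$$

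The next step exploits constancy a second time: since $|P_n(h) - U_n(h)|$ takes the same value for every $h \in \mathcal{K}_g$,
$$|\mathcal{K}_g| \cdot |P_n(g) - U_n(g)|^2 \;=\; \sum_{h \in \mathcal{K}_g} |P_n(h) - U_n(h)|^2 \;\leq\; \sum_{h \in A_n} |P_n(h) - U_n(h)|^2 \;=\; \frac{O(n^{-s})}{|A_n|},$$
where the last equality is the assumption \eqref{eqn:l2bound}. Taking square roots then yields
$$|\mathcal{K}_g| \cdot |P_n(g) - U_n(g)| \;\leq\; \sqrt{\frac{|\mathcal{K}_g|}{|A_n|}} \cdot O(n^{-s/2}).$$

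The main (and only non-trivial) task that remains is to extract an extra factor of $n^{-1/2}$ from $\sqrt{|\mathcal{K}_g|/|A_n|}$ uniformly in $g \in A_n$, which is precisely what Lemma \ref{lem:classesInSN} delivers: for $n \geq 7$ the largest conjugacy class of $S_n$ has size $n!/(n-1)$, so $|\mathcal{K}_g|/|A_n| \leq 2/(n-1) = O(n^{-1})$ for every $g \in A_n$. Plugging this into the display above produces the advertised bound $O(n^{-(1+s)/2})$. Uniformity of the implied constant in $g$ is automatic, since both \eqref{eqn:l2bound} and the size bound from Lemma \ref{lem:classesInSN} are independent of $g$.
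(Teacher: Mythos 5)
Your proposal is correct and follows essentially the same route as the paper: both reduce the statement to $|\mathcal{K}_g|\,|P_n(g)-U_n(g)|$ via constancy on conjugacy classes, pass to the square and use the $L^2$ hypothesis \eqref{eqn:l2bound}, and extract the extra $n^{-1/2}$ by bounding $|\mathcal{K}_g|/|A_n| = O(1/n)$ uniformly in $g$ (the paper bounds $|\mathcal{K}_g|$ by the largest class of $S_n$ contained in $A_n$, while you use the largest class of $S_n$ from Lemma \ref{lem:classesInSN}; both give the same order). No gaps.
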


\begin{proof}

    Let $U_n$ be the uniform distribution on $A_n$. Let $\calK_\text{max}$ be the conjugacy class of $(1 \cdots n-1)(n)$ if $n$ is even and $(1 \cdots n)$ if $n$ is odd which is the largest conjugacy class of $S_n$ in $A_n$. By Proposition \ref{prop:cycleNumbers} and noting that both $P_n$ and $U_n$ are class functions, we have that \begin{align*}\left|\text{Pr}(\sigma \in \mathcal{K}_g) - 2 \prod_{k = 1}^n \frac{k^{-a_k}}{a_k !}\right| & = \left | \sum_{h \in \mathcal{K}_g} (P_n(h) - U_n(h)) \right| \\
    & = |\mathcal{K}_g||P_n(g) - U_n(g)| \\
    & = \sqrt{|\mathcal{K}_g|} \sqrt{|\mathcal{K}_g||P_n(g) - U_n(g)|^2} \\ 
    & = \sqrt{|\mathcal{K}_g|}\sqrt{\sum_{h \in \mathcal{K}_g}|P_n(h) - U_n(h)|^2} \\ 
    & \leq  \sqrt{\frac{|\calK_\text{max}|}{|A_n|}} \sqrt{|A_n|\sum_{h \in A_n} |P_n(h) - U_n(h)|^2} = O\left(\frac{1}{n^{(1+s)/2}}\right),\end{align*} 
    since $\frac{|\mathcal{K}_\text{max}|}{|A_n|} = O\left(\frac{1}{n}\right)$ with constant independent of $g$.
\end{proof}

The error term is large enough that for most strata the given upper bound on probabilities are meaningless. However, these errors are independent of stratum, and this will be enough to give us the result we desire about the largest stratum, Theorem \ref{thm:sufficientcriterion}.

\begin{proof}[Proof of Theorem \ref{thm:sufficientcriterion}] Define $\calH_n = \calH(n-1)$ if $n$ is odd and $\calH_n = \calH(n-2)$ if $n$ is even and let $\calK_n$ be the conjugacy class corresponding to stratum $\calH_n$. Let $\mathcal{H}_n'$ be a stratum with $\mathcal{H}_n' \neq \mathcal{H}_n$ and let $\mathcal{K}_n'$ be the conjugacy class corresponding to $\mathcal{H}'_n.$ 

Let $\sigma \in A_n$ be a random permutation distributed according to $P_n$ and let $\pi \sim \Unif(A_n)$.  By Theorem \ref{thm:stratumprobabilities}, there exists some positive $M \in  \mathbb{R}$ and some $N_1 \in \mathbb{N}$ such that for all $n \geq N_1$ and $g \in A_n,$ we have that \begin{equation*}M > n^{(1 + s)/2} \left|\text{Pr}(\sigma \in \mathcal{K}_g) - \Pr(\pi \in \calK_g)\right|\end{equation*} 
where $\calK_g$ is the conjugacy class of $S_n$ that contains $g$.
Since $s >1$, there exists $N_2 \in \mathbb{N}$ be such that for all $n \geq N_2$, \begin{equation*}\frac{n^{(1 + s)/2} (n - 6)}{2n(n-3)} > M.\end{equation*}
Then for all $n \geq \max(N_1, N_2),$
\begin{align*}\Pr(\sigma \in \mathcal{K}_n) - \text{Pr}(\sigma \in \mathcal{K}_n')  = & \text{Pr}(\sigma \in \mathcal{K}_n) - \text{Pr}(\pi \in \mathcal{K}_n) \\
& +  \text{Pr}(\pi \in \mathcal{K}_n) - \text{Pr}(\pi \in \mathcal{K}_n') \\
& + \text{Pr}(\pi \in \mathcal{K}_n') -\text{Pr}(\sigma \in \mathcal{K}_n') \\ 
 \geq & - \left|\text{Pr}(\sigma \in \mathcal{K}_n) - \text{Pr}(\pi \in \mathcal{K}_n)\right|\\
 & + \text{Pr}(\pi \in \mathcal{K}_n) - \text{Pr}(\pi \in \mathcal{K}_n') \\
 & -  \left|\text{Pr}(\pi \in \mathcal{K}_n') -\text{Pr}(\sigma \in \mathcal{K}_n')\right|\\
\geq & - \frac{2 M}{n^{(1 + s)/2}} + \frac{n-6}{n(n-3)} > 0,\end{align*} where we use Lemma \ref{lem:uniformBound} to conclude that $\text{Pr}(\pi \in \mathcal{K}_n) - \text{Pr}(\pi \in \mathcal{K}_n') \geq \frac{n-6}{n(n-3)}.$ Thus, for all $n \geq \max(N_1, N_2),$ we have that $\mathcal{H}_n$ is the most probable stratum in the model. \end{proof}

As a corollary to Theorem \ref{thm:sufficientcriterion}, we deduce the most likely stratum for both the standard and HR-models:

\mostlikelystratum*

\begin{proof} For the standard model, we note from the proof of Lemma 2.1 of \cite{ShresRandom} that the standard model satisfies equation \eqref{eqn:l2bound} with $s = 2$. Hence, the result follows from Theorem \ref{thm:sufficientcriterion}.

For the HR-model, we fix $\alpha \in [0, \frac{1}{4})$ and let $\mu_n \vdash n$ be a partition with at most $n^\alpha$ parts. From Lemma \ref{lem:sigmaTowardUniform} and following the proof of Theorem \ref{thm:probabilityConvergenceReal}, for any $\epsilon > 0$ with $\alpha + \epsilon < \frac{1}{4}$, we have 
$$\frac{1}{4} |A_n|\sum_{g \in A_n}|P_{\mu_n}(g) - U_n(g)|^2\leq \frac{1}{4} \sum_{\lambda \neq (n), \lambda \neq (1^n)} \frac{1}{(\dim \rho^\lambda)^2} |\chi^\lambda(\sigma)|^4 =  O\left(\frac{1}{n^{2-4\alpha-4\epsilon}}\right).$$
Now note that $P_{\mu_n}$ satisfies equation \eqref{eqn:l2bound} with $s = 2 - 4\alpha - 4\epsilon > 1.$ Hence, by Theorem \ref{thm:sufficientcriterion}, the most likely strata in the HR-model based on $\alpha$ is either $\calH(n-1)$ or $\calH(n-2)$.
\end{proof}

\begin{remark} Note that a weaker version of Theorem \ref{thm:stratumprobabilities} is immediate from Theorem \ref{thm:probabilityConvergenceReal}. In particular, if $P_n$ satisfies equation \eqref{eqn:l2bound}, then it satisfies $$||P_n - U_n||^2 = O(n^{-s}),$$ so that immediately we obtain 
    $$\left|\Pr(\sigma \in \mathcal{K}_g) - 2 \prod_{k = 1}^n \frac{k^{-a_k}}{a_k !} \right| \leq ||P_n - U_n|| =   O\left(\frac{1}{n^{s/2}}\right)$$
However, this weaker bound forces $s > 2$ in Theorem \ref{thm:sufficientcriterion}, which is not enough to give Corollary \ref{cor:mostlikelyStrataspecific}.

\end{remark}

\section{Holonomy theorem} \label{sec:holonomy}

We devote this section to proving the Holonomy Theorem:

\holonomy*

The proof of this theorem follows the strategy used in the Holonomy Theorem from \cite{ShresRandom} where an analogous theorem for the standard model is proved. In using the HR model, the bounds on the error terms are weakened.

\begin{proof}[Proof of Theorem \ref{thm:holonomy}]
 Motivated by Lemma \ref{lem:combcriterionforgeom} we consider the commutator $[\sigma, \tau]$ for both cases.
Define first sets
        $$X_n = \{ (\sigma, \tau) \in \calK_{\mu_n} \times S_n|\, [\sigma, \tau] \text{ is a derangement}\} \text{ and}$$
        $$Y_n = \{ (\sigma, \tau) \in \calK_{\mu_n} \times S_n| \,\emptyset \neq \text{ fixed points of }[\sigma, \tau] \subseteq \text{ fixed points of }\sigma  \cap  \text{ fixed points of }\tau \} $$
        Note that $X_n$ and $Y_n$ are disjoint sets. 
        By Lemma \hyperref[lem:holcriterion]{\ref*{lem:combcriterionforgeom}.\ref*{lem:holcriterion}}, 
$$\Pr(S(\sigma, \tau) \text{ is a holonomy torus}) = \Pr((\sigma, \tau) \in X_n) + \Pr((\sigma,\tau) \in Y_n).
$$
Let $\pi \sim \Unif(A_n)$. Then,
\begin{align*}
    &\left|\Pr(S(\sigma, \tau) \text{ is a holonomy torus})-\frac{1}{e} \right| \\
    =&  \left|\Pr((\sigma, \tau) \in X_n) + \Pr((\sigma,\tau) \in Y_n)-\frac{1}{e} \right| \\
    \leq& \left|\Pr((\sigma, \tau) \in X_n) - \Pr(\pi \text{ is a derangement})\right| + \left| \Pr(\pi \text{ is a derangement}) - \frac{1}{e}\right|+ |\Pr((\sigma,\tau) \in Y_n)|
\end{align*}
Note that every pair of permutations in $Y_n$ generates an non-transitive subgroup of $S_n$ since $\sigma$ and $\tau$ share fixed points. Hence, using 
Proposition \ref{prop:AnDerangement}, Theorem \ref{thm:probabilityConvergenceReal} and Proposition \ref{prop:connectedness}, we have, for any $\epsilon > 0$ such that $\alpha + \epsilon < \frac{1}{2}$,
\begin{align*}
    &\left|\Pr(S(\sigma, \tau) \text{ is a holonomy torus})-\frac{1}{e} \right| \\
    \leq\,& ||P_{\mu_n} - U_n|| + \frac{n^2}{(n+1)!} + \Pr(\ideal{\sigma, \tau} \text{ is not transitive})\\
    =\,& O\left(\frac{1}{n^{1-2\alpha-2\epsilon}} \right)+ \frac{n^2}{(n+1)!} + \frac{1}{n^{1-\alpha}} + O\left(\frac{1}{n^{2-\alpha}}\right)\\ 
    =\,& O\left(\frac{1}{n^{1-2\alpha-2\epsilon}} \right).
\end{align*}

Next, let $f_{\mu_n}$ be the number of fixed points of $[\sigma, \tau]$ with $(\sigma, \tau) \sim \Unif(\calK_{\mu_n} \times S_n)$. Recall that $C_{\mu_n}$ is the number of cycles of $[\sigma, \tau]$.  Using Lemma \hyperref[lem:viscriterion]{\ref*{lem:combcriterionforgeom}.\ref*{lem:viscriterion}},
\begin{align*}
    \Pr(S(\sigma, \tau) \text{ is a visibility STS}) &> \Pr\left(f_{\mu_n}  < \frac{n}{2}\right)\\ 
    &> \Pr\left(C_{\mu_n} < \frac{n}{2}\right)\\
    &=1 - \Pr\left(C_{\mu_n} \geq \frac{n}{2}\right)\\
    &= 1 - O\left(\frac{n^2}{2^{n/2}}\right)
\end{align*}
where the last equality follows from Lemma $\ref{lem:largedeviations}$.\end{proof}

\newpage
\bibliographystyle{abbrv}
\bibliography{references}

\end{document}